\numberwithin{equation}{section}
\def\L{{\mathcal L}}
\def\Ln{{\mathcal L}^n}
\def\N{{\bf N}}
\def\R{{\bf R}}
\def\Rn{{\bf R}^n}
\def\Z{{\bf Z}}
\def\={:=}
\def\eps{{\varepsilon}}
\def\FF{{\mathcal F}}
\def\FFepsj{{\mathcal F}_{j}}
\def\Fj{F_j}
\def\dx{\, dx}
\def\dist{{\rm dist}}
\def\Teioe{T_{j}^{\ii}(\om)}
\def\Teoe{T_{j}(\om)}
\def\Teo{T_{j}(\om)}
\def\mmu{\mathbb{P}}
\def\om{\omega}
\def\gammaio{\gamma(\ii,\om)}
\def\gammab{\gamma_0}
\def\epsj{\eps_j}
\def\cnps{c\,}
\def\cnpsm{c\,}
\def\Wsp{W^{s,p}}
\def\Ksp{\dot{W}^{s,p}}
\def\tWsp{\hat{W}^{s,p}}
\def\Wspo{W_0^{s,p}}
\def\csp{\mathrm{cap}_{s,p}}
\def\cspk{\mathrm{cap}_{\KK}}
\def\CSP{C_{s,p}}
\def\CSPK{C_{\KK}}
\def\Om{\Omega}
\def\Omp{O}
\def\deltaj{\delta_j}
\def\Qij{Q_j^{\ii}}
\def\Cij{C_j^{\ii,h_j}}
\def\Bij{B_j^{\ii}}
\def\Bkj{B_j^{\kk}}
\def\tBij{\hat{B}^{\ii}_j}
\def\tBkj{\hat{B}^{\kk}_j}
\def\Kij{T_j^{\ii}}
\def\Kj{T_j}
\def\Bijh{B_j^{\ii,h}}
\def\Bijhj{B_j^{\ii,h_j}}
\def\Cijh{C_j^{\ii,h}}
\def\uij{u^{\ii}_j}
\def\ukj{u^{\kk}_j}
\def\res{\mathop{\hbox{\vrule height 7pt width .4pt depth 0pt
\vrule height .4pt width 6pt depth 0pt}}\nolimits}
\newcommand{\defectsp}{\mathcal{D}_{s,p}}
\def\tildeu{\tilde{u}}
\def\AA{\mathcal A}
\def\BB{\mathcal B}
\def\ii{{\tt i}}
\def\kk{{\tt k}}
\def\lll{{\tt l}}
\def\rL{r_\Lambda}
\def\RL{R_\Lambda}
\def\diam{\mathrm{diam}}
\def\A{A}
\def\Ac{U\setminus\overline{A}}
\def\Apr{A^\prime}
\def\Lip{\mathrm{Lip}}
\def\xii{{\mathrm x}^{\ii}}
\def\xkk{{\mathrm x}^{\kk}}
\def\xiij{\xii_j}
\def\xkkj{\xkk_j}
\def\Vij{V_{j}^{\ii}}
\def\lambdaj{\lambda_j}
\def\xij{\xi_{N}}
\def\KK{\mathcal K}
\def\KKK{\mathscr K}
\def\KKepsj{\KK_{j}}
\def\rj{r_j}
\def\Rj{R_j}
\def\Ieps{{\mathcal I}_j}
\def\IIeps{{\mathscr I}_j}
\def\IL{{\mathcal I}_\Lambda}
\def\IIL{{\mathscr I}_\Lambda}
\def\spt{{\rm spt}}
\def\res{\mathop{\hbox{\vrule height 7pt width .4pt depth 0pt
\vrule height .4pt width 6pt depth 0pt}}\nolimits}
\def \trait (#1) (#2) (#3){\vrule width #1pt height #2pt depth #3pt}
\def \qed{\hfill
        \trait (0.1) (6) (0)
        \trait (6) (0.1) (0)
        \kern-6pt
        \trait (6) (6) (-5.9)
        \trait (0.1) (6) (0)
\medskip}
\newtheorem{theorem}{Theorem}[section]
\newtheorem{lemma}[theorem]{Lemma}
\newtheorem{corollary}[theorem]{Corollary}
\newtheorem{remark}[theorem]{Remark}
\newtheorem{example}[theorem]{Example}
\newtheorem{proposition}[theorem]{Proposition}
\newtheorem{definition}[theorem]{Definition}
\begin{document}
\title[{Aperiodic fractional obstacle problems}]
{Aperiodic fractional obstacle problems}
\author{Matteo Focardi}
\address{Dip. Mat. ``Ulisse Dini''\\
         V.le Morgagni, 67/a\\ 
         50134  Firenze, Italy} 
\email{focardi@math.unifi.it}
\urladdr{http://web.math.unifi.it/users/focardi/}
\keywords{Obstacle problems; Non-local energies; Delone sets of points; 
$\Gamma$-convergence.}
\subjclass[2000]{Primary: 74Q15, 35R11, 49J40,}

\begin{abstract}
We determine the asymptotic behaviour of (bilateral) obstacle problems 
for fractional energies in rather general aperiodic settings via 
$\Gamma$-convergence arguments.
As further developments we consider obstacles with random sizes 
and shapes located on points of standard lattices, and the case 
of random homothetics obstacles centered on random Delone sets 
of points.   

Obstacle problems for non-local energies occur in several 
physical phenomenona, for which our results provide a description 
of the first order asympotitc behaviour.
\end{abstract}
\maketitle

\section{Introduction}\label{intro}

Non-local energies and operators have been actively investigated 
over recent years. They arise in problems from different fields, 
the most celebrated being Signorini's problem in contact mechanics: 
finding the equilibria of an elastic body partially 
laying on a surface and acted upon part of its boundary 
by unilateral shear forces (see \cite{Signorini}, \cite{Fichera}).
In the anti-plane setting the elastic energy can be then expressed 
in terms of the seminorm of a $H^{1/2}$ function, or equivalently 
as the boundary trace energy of a $W^{1,2}$ displacement. 

As further examples we mention applications in elasticity, for 
instance in phase field theories for dislocations (see 
\cite{KCO} and the references therein);
in heat transfer for optimal control of temperature across a surface 
\cite{Frehse}, \cite{Athanasopoulos}; in equilibrium statistical 
mechanics to model free energies of Ising spin systems with Kac 
potentials on lattices (see \cite{ABCP} and the references therein);
in fluid dynamics to describe flows through semi-permeable membranes 
\cite{Duvaut-Lions};
in financial mathematics in pricing models for American options
\cite{Amadori}; and in probability in the theory of Markov processes 
(see \cite{BK1}, \cite{BK2} and the references therein).

Many efforts have been done to extend the existing theories for 
(fully non-linear) second order elliptic equations to non-local 
equations.
Regularity has been developed for integro-differential 
operators \cite{BK1}, \cite{BK2}, \cite{Caf-Silv1}, 
and for obstacle problems for the fractional laplacian 
(see \cite{Caf-Sal-Silv}, \cite{Silvestre} and the references therein).
Periodic homogenization has been studied for a quite general class of 
non-linear, non-local uniformly elliptic equations \cite{Schwab} 
and for obstacle problems for the fractional laplacian 
\cite{Caf-Mel2}, \cite{Foc}. 

In this paper we determine the homogenization limit for bilateral 
obstacle problems involving fractional type energies. In doing that 
we employ a variational approach by following De Giorgi's 
$\Gamma$-convergence theory. 
Many contributions in literature are related to the analogous problem 
for energies defined in ordinary Sobolev spaces or equivalently for
local elliptic operators. 
The setting just mentioned will be referred to in 
the sequel as the \emph{local case} in contrast to the non-local 
framework object of our analysis.
Starting from the seminal papers by Marchenko 
and Khruslov \cite{MK}, Rauch and Taylor \cite{RT1}, \cite{RT2}, and 
Cioranescu and Murat \cite{CM} there has been an outgrowing interest 
on this kind of problems with different approaches (see the books 
\cite{ATT}, \cite{B3}, \cite{BDF}, \cite{CD}, \cite{DM} and the 
references therein on this subject).  
We limit ourselves to stress that $\Gamma$-convergence theory 
was successfully applied to tackle the problem and to solve it  
in great generality (see \cite{DGDML}, \cite{DM2}, \cite{DM3}).

Let us briefly resume the contents of this paper in a model case 
(for all the details and the precise assumptions see section~\ref{s:determ}).
With fixed a bounded set $T$, and a discrete and homogeneous 
distribution of points 
$\Lambda=\{{\mathrm x}^\ii\}_{\ii\in\Z^n}$ (see Definition~\ref{voro}), 
define for all $j\in\N$ the \emph{obstacle set} $\Kj\subseteq\Rn$ by 
$\Kj=\cup_{\ii\in\Z^n}\left(\epsj\,{\mathrm x}^\ii+\epsj^{n/(n-sp)}T\right)$, 
where $(\epsj)_{j\in\N}$ is a positive infinitesimal sequence. 
Given a bounded, open and connected subset $U$ of $\Rn$, $n\geq 2$, 
with Lipschitz regular boundary we consider the functionals 
$\FFepsj:L^p(U)\to[0,+\infty]$ given by
$$
\FFepsj(u):=\int_{U\times U}\frac{|u(x)-u(y)|^p}{|x-y|^{n+sp}}dxdy
\quad \text{ if } u\in \Wsp(U),\,
\tildeu=0\,\,  \csp \text{ q.e. on } \Kj\cap U
$$
and $+\infty$ otherwise. Here, $\Wsp(U)$ is the Sobolev-Slobodeckij 
space for $s\in(0,1)$, $p\in(1,+\infty)$ and $sp\in(1,n)$, 
$\csp$ is the related variational $(p,s)$-capacity, and $\tilde{u}$ 
denotes the precise representative of $u\in\Wsp(U)$ which is defined 
except on a $\csp$-negligible set (see subsections~\ref{sobslob} 
and \ref{cap}).

In Theorem~\ref{main} we show that the asymptotic behaviour of
the sequence $(\FFepsj)_{j\in\N}$ is described in terms of 
$\Gamma(L^p(U))$-convergence by the functional
$\FF:L^p(U)\to[0,+\infty]$ defined by
$$
\FF(u)=\int_{U\times U}\frac{|u(x)-u(y)|^p}{|x-y|^{n+sp}}dxdy
+\theta\,\csp(T)\int_{U}|u(x)|^p\beta(x)\,dx
$$
if $u\in \Wsp(U)$, $+\infty$ otherwise in $L^p(U)$.
The quantities $\theta$ and $\beta$ represent respectively the limit 
density and the limit normalized distribution of the 
set of points $\epsj\Lambda$ and can be explicitely calculated
in some cases (see \eqref{e:Rj/rj} and \eqref{e:unif-distrib} for the 
exact definitions, Remark~\ref{r:tight} for further comments, and 
Examples~\ref{ex:rescaled}-\ref{ex:diffeo2} where some cases are 
discussed in details).

Adding zero boundary conditions, $\Gamma$-convergence then implies the 
convergence for minimizers and minimum values of $(\FFepsj)_{j\in\N}$ 
to the respective quantities of $\FF$.
More generally, we can study the asymptotic behaviour of  
anisotropic variations of the Gagliardo seminorm 
(see subsection~\ref{s:generalizations}).

We remark that it is not our aim to establish a general abstract theory 
as Dal Maso did in the local case \cite{DM2},\cite{DM3}
(see \cite{Balzano} for related results in random settings), 
nor to consider the most general framework for homogenization as the 
one proposed by Nguetseng in the linear, local and non degenerate case 
\cite{Ng}; but rather we aim at giving explicit constructive results for a  
sufficiently broad class of fractional energies and non-periodic obstacles.

The main novelty of the paper is that we extend the asymptotic analysis of
obstacle probelms for local energies to non-local ones. In doing that 
we use intrinsic arguments and give a self-contained proof. We avoid 
extension techniques with which the original problem is transformed 
into a homogenization problem at the boundary by rewriting fractional 
seminorms as trace energies for local (but degenerate) functionals in 
one dimension higher (see  \cite{CMT}, \cite{Caf-Mel2}, \cite{Foc}). 
This is the well known harmonic extension procedure for $H^{1/2}$ functions; 
more recently it was proved to hold true also for the fractional laplacian, 
corresponding to $p=2$ and $s\in(0,1)$ above, by Caffarelli and Silvestre 
\cite{Caf-Silv}.  
Because of our approach, we are able to deal with non-local energies 
which can not be included in the previous frameworks.
Generalizations are likely to be done in several directions, the most 
immediate being the choice of the obstacle condition. For, we 
have confined ourselves to the basic bilateral zero obstacle 
condition in the scalar case only for the sake of simplicity, improvements 
to vector valued problems and unilateral obstacles seem to be at hand 
(see \cite{ANB}, \cite{Foc}).

Our analysis do not need the usual periodicity or almost periodicity 
assumptions for the distribution of obstacles. We deal with  
aperiodic settings defined after 
\emph{Delone set of points} (see subsection~\ref{s:tilings}), the set 
$\Lambda$ introduced above. 
No regularity or simmetry conditions are imposed on $\Lambda$, only 
two simple geometric properties are assumed: \emph{discreteness} and 
\emph{homogeneity}. They turn out to be physically reasonable conditions; 
as a matter of fact Delone sets have been introduced in $n$-dimensional 
mathematical cristallography to model many non-periodic structures such as 
quasicrystals (see \cite{Sen}). Essentially, these assumptions guarantee 
that points in $\Lambda$ can neither cluster nor be scattered away. 

Furthermore, we show that with minor changes the same tools are suited 
also to deal with some random settings. In particular, we deal with
obstacles with random sizes and shapes located on points of a standard
lattice in $\Rn$, a setting introduced by Caffarelli and Mellet 
\cite{Caf-Mel1}, \cite{Caf-Mel2}; and consider also 
homothetics random copies of a given obstacle located on random 
lattices following Blanc, Le Bris and Lions \cite{BLBL1}, 
\cite{BLBL2} (for more details see section~\ref{s:random}).

As a byproduct, our approach yields also an intrinsic proof of 
the results first obtained by \cite{Caf-Mel2} that avoids the  
extension techniques in \cite{Caf-Silv}. A different proof using 
$\Gamma$-convergence methods, but still relying on those extension 
techniques, was given by the author in \cite{Foc}.

The key tools of our analysis are Lemmas~\ref{tecnico2} and \ref{joining} 
below. By means of these results we reduce the $\Gamma$-limit 
process to families of functions which are constants on suitable 
annuli surrounding the obstacle sets. 
Lemma~\ref{joining} is the counterpart in the current non-local framework 
of the joining lemma in varying domains for gradient energies on 
standard Sobolev spaces proven by Ansini and Braides \cite{ANB}.
It is a variant of an idea by De Giorgi in the setting of varying 
domains, on the way of matching boundary conditions by increasing 
the energy only up to a small error. 
As in the local case the proofs of Lemmas~\ref{tecnico2} and \ref{joining} 
exploit De Giorgi's slicing/averaging principle and the fact that 
Poincar\'e-Wirtinger inequalities are qualitatively invariant under 
families of biLipschitz mappings with equi-bounded Lipschitz constants. 

Despite this, the non-local behaviour of fractional energies 
introduces several additional difficulties into the problem: 
Lemmas~\ref{tecnico2} and \ref{joining} do not follow 
from routine modifications of the arguments used in the local case. 
New ideas have to be worked out mainly to control the long-range 
interaction terms.
A major role in doing that is played 
by the counting arguments in Proposition~\ref{p:voroprop2},
Hardy inequality (see Theorem~\ref{hardy}), and the estimates 
on singular kernels in Lemma~\ref{Adams}.

The paper is organized as follows.
In section~\ref{s:prel} we list the necessary prerequisites on
$\Gamma$-convergence, Sobolev-Slobodeckij spaces and variational 
capacities giving precise references for those not proved.
Section~\ref{s:determ} is devoted to the exact statement and the proof 
of the homogenization result for deterministic distribution of 
obstacles. To avoid unnecessary generality we deal with the model 
case of fractional seminorms. Generalizations to anisotropic 
kernels are postponed to subsection~\ref{s:generalizations}.
The ideas of section~\ref{s:determ} are then used in section~\ref{s:random} 
to deal with the two different random settings mentioned before.
Finally, we give the proof of an elementary technical result, though 
instrumental for us, in Appendix~\ref{conti}.

\section{Preliminaries and Notations}\label{s:prel}

\subsection{Basic Notations}
We use standard notations for Lebesgue and Hausdorff measures, 
and for Lebesgue and Sobolev function spaces. 

The Euclidean norm in $\Rn$ is denoted by $|\cdot|$, the maximum 
one by $|\cdot|_\infty$. $B_r(x)$ stands for the Euclidean ball 
in $\Rn$ with centre $x$ and radius $r>0$, and we write simply 
$B_r$ in case $x=\underline{0}$. As usual $\omega_n:=\Ln(B_1)$.

Given a set $E\subset\R^n$ its complement will be indifferently 
denoted by $E^c$ or $\R^n\setminus E$.
Its interior and closure are denoted by $\mathrm{int(E)}$ and 
$\overline{E}$, respectively.
Given two sets $E\subset\subset F$ in $\Rn$, a 
\emph{cut-off function between E and F} is any 
$\varphi\in \Lip(\Rn,[0,1])$ such that $\varphi|_{\overline{E}}\equiv 1$, 
$\varphi|_{\Rn\setminus F}\equiv 0$, and 
$\Lip(\varphi)\leq 1/\dist(E,\partial F)$. 

Given an open set $A\subseteq\Rn$ the collections of its open, Borel 
subsets are denoted by $\AA(A)$, $\BB(A)$, respectively.
The diagonal set in $\Rn\times\Rn$ is denoted by $\Delta$, and 
for every $\delta>0$ its open $\delta$-neighborhood by
$\Delta_\delta:=\{(x,y)\in \Rn\times\Rn:\, |x-y|<\delta\}$.
Accordingly, for any set $E\subseteq\Rn$ and for any $\delta>0$ 
\begin{equation}\label{e:vdelta}
E_\delta:=\{x\in\Rn:\,\dist(x,E)<\delta\},\quad\quad
E_{-\delta}:=\{x\in E:\,\dist(x,\partial E)>\delta\}.
\end{equation}
In the following $U$ will always be an open and connected 
subset of $\Rn$ whose boundary is Lipschitz regular.

In several computations below the letter $c$ generically denotes a
positive constant. We assume this convention since it is not 
essential to distinguish from one specific constant to another, 
leaving understood that the constant may change from line to line.
The parameters on which each constant $c$ depends will be 
explicitely highlighted.

\subsection{$\Gamma$-convergence}
We recall the notion of $\Gamma$-convergence introduced by De Giorgi
in a generic metric space $(X,d)$ endowed with the topology induced
by $d$ (see \cite{DM},\cite{B3}).
A sequence of functionals $F_j:X\to [0,+\infty]$
{\it $\Gamma$-converges} to a functional $F:X\to [0,+\infty]$ in $u\in X$,
in short $F(u)=\Gamma\hbox{-}\lim_{j}F_j(u)$,
if 
the following two conditions hold:

{(i)} ({\it liminf inequality}) $\forall\  (u_j)_{j\in\N}$
converging to $u$ in $X$,
we have $\liminf_jF_{j}(u_j)\ge F(u)$;

{(ii)} ({\it limsup inequality}) $\exists$ $(u_j)_{j\in\N}$ converging 
to $u$ in $X$ such that $\limsup_jF_{j}(u_j)\le F(u)$.

\noindent We say that $F_j$ \emph{$\Gamma$-converges} to $F$
(or $F$= $\Gamma$-lim$_{j}F_j$) if
$F(u)=\Gamma\hbox{-}\lim_{j}F_j(u)$ $\forall u\in X$.
We may also define the \emph{lower} and \emph{upper $\Gamma$-limits} as
$$
\Gamma\hbox{-}\limsup_{j}F_j(u)=\inf\{\limsup_{j}
F_j(u_j):\  u_j\to u\},
$$
$$
\Gamma\hbox{-}\liminf_{j}F_j(u)=\inf\{\liminf_{j}
F_j(u_j):\  u_j\to u\},
$$
respectively, so that conditions (i) and (ii) are equivalent to
$\Gamma$-limsup$_j F_j(u)=\Gamma$-liminf$_j F_j(u)=F(u)$.
Moreover, the functions $\Gamma$-limsup$_j F_j$ and
$\Gamma$-liminf$_j F_j$ are lower semicontinuous.

One of the main reasons for the introduction of this notion is explained
by the following fundamental theorem (see \cite[Theorem 7.8]{DM}).

\begin{theorem}\label{min}
Let $F=\Gamma$-$\lim_{j}F_j$, and assume there exists
a compact set $K\subset X$  such that
$\inf_X F_j=\inf_K F_j$ for all $j$. Then there exists
$\min_X F =\lim_{j}\inf_X F_j$. Moreover,
if $(u_j)_{j\in\N}$ is a converging sequence such that
$\lim_j F_{j}(u_j)=\lim_j\inf_X F_{j}$
then its limit is a minimum point for $F$.
\end{theorem}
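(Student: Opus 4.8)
\textbf{Proof plan for Theorem~\ref{min}.}
The plan is to exploit the liminf and limsup inequalities together with the equicoercivity hypothesis on the common compact set $K$. First I would fix a minimizing sequence for the infima: by definition of infimum choose for each $j$ a point $v_j\in K$ with $F_j(v_j)\le\inf_X F_j+1/j=\inf_K F_j+1/j$. Since $K$ is compact, up to a subsequence $v_j\to v$ in $X$ for some $v\in K$. Applying the liminf inequality (i) along this subsequence gives
$$
F(v)\le\liminf_j F_j(v_j)\le\liminf_j\bigl(\inf_X F_j+1/j\bigr)=\liminf_j\inf_X F_j.
$$
Consequently $\inf_X F\le F(v)\le\liminf_j\inf_X F_j$.

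For the reverse inequality I would use the limsup inequality. Let $u\in X$ be arbitrary; by (ii) there is a recovery sequence $u_j\to u$ with $\limsup_j F_j(u_j)\le F(u)$. Since $F_j(u_j)\ge\inf_X F_j$ for every $j$, we get $\limsup_j\inf_X F_j\le\limsup_j F_j(u_j)\le F(u)$. Taking the infimum over $u\in X$ yields $\limsup_j\inf_X F_j\le\inf_X F$. Combining with the previous paragraph,
$$
\inf_X F\le\liminf_j\inf_X F_j\le\limsup_j\inf_X F_j\le\inf_X F,
$$
so all these quantities coincide; in particular $\lim_j\inf_X F_j$ exists and equals $\inf_X F$. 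Moreover the point $v$ constructed above satisfies $F(v)\le\inf_X F$, hence $F(v)=\inf_X F=\min_X F$, which shows the infimum of $F$ is attained.

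Finally, for the statement about converging sequences, suppose $(u_j)_{j\in\N}$ converges to some $u_*\in X$ and $\lim_j F_j(u_j)=\lim_j\inf_X F_j$. By the liminf inequality applied to this sequence,
$$
F(u_*)\le\liminf_j F_j(u_j)=\lim_j\inf_X F_j=\min_X F,
$$
and since $F(u_*)\ge\min_X F$ by definition of the minimum, equality holds and $u_*$ is a minimum point of $F$.

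The argument is essentially formal once the hypotheses are in place; the only genuinely load-bearing step is the use of compactness of $K$ to extract a convergent subsequence of the almost-minimizers $v_j$, which is exactly where the equicoercivity assumption $\inf_X F_j=\inf_K F_j$ enters. I expect no serious obstacle beyond bookkeeping with subsequences, noting that once $\lim_j\inf_X F_j$ is shown to exist, passing to subsequences in the liminf step does not weaken the conclusion.
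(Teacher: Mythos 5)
The paper does not prove Theorem~\ref{min}; it cites \cite[Theorem 7.8]{DM}. Your argument is the standard one and has the right overall structure (use (ii) for the limsup bound on the infima, use compactness plus (i) for the liminf bound and the attainment, then a one-line liminf argument for the minimizing-sequence statement). However, there is a genuine, if small, logical slip in the first step.

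You pick near-minimizers $v_j\in K$ with $F_j(v_j)\le\inf_X F_j+1/j$ and then extract a convergent subsequence $v_{j_k}\to v$, dictated solely by compactness of $K$. The $\Gamma$-liminf inequality is then applicable only along that subsequence, yielding
$$F(v)\le\liminf_k F_{j_k}(v_{j_k})\le\liminf_k\inf_X F_{j_k},$$
not $F(v)\le\liminf_j F_j(v_j)$ over the full sequence as you wrote. The problem is that $\liminf_k\inf_X F_{j_k}\ge\liminf_j\inf_X F_j$ in general (a liminf along a subsequence can only be larger), so the chain in your display does not establish $F(v)\le\liminf_j\inf_X F_j$. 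A concrete fix: first extract a subsequence $(j_m)$ along which $\inf_X F_{j_m}\to\liminf_j\inf_X F_j$, then use compactness of $K$ to pass to a further subsequence on which the corresponding $v_{j_m}$ converge to some $v\in K$. Along that doubly-refined subsequence every $\liminf$ you need is actually a limit equal to $\liminf_j\inf_X F_j$, and the estimate $F(v)\le\liminf_j\inf_X F_j$ follows as you intend. The rest of your proof (the limsup bound via recovery sequences, the squeeze showing the limit exists and equals $\min_X F$, and the final paragraph on minimizing sequences) is correct as written.
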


\subsection{Non-periodic tilings}\label{s:tilings}

In the ensuing sections we will deal with a general framework 
extending the usual periodic setting. 
The partition of $\Rn$ we consider is obtained via the 
\emph{Vorono\"i tessellation} related to a fixed 
\emph{Delone set of points} $\Lambda$.
We refer to the by now classical book of M. Senechal 
\cite{Sen} for all the relevant results.

\begin{definition}\label{voro}
A point set $\Lambda\subset\Rn$ is a {\rm Delone (or Delaunay) set} 
if it satisfies
\begin{itemize}
\item[(i)] {\rm Discreteness}: there exists $r>0$ such that for all 
$x,y\in\Lambda$, $x\neq y$, $|x-y|\geq 2r$;

\item[(ii)] {\rm Homogeneity or Relative Density}: 
there exists $R>0$ such that 
$\Lambda\cap B_R(x)\neq\emptyset$ for all $x\in\Rn$.
\end{itemize}
\end{definition}
It is then easy to show that $\Lambda$ is countably infinite. Hence,
from now on we use the notation $\Lambda=\{\xii\}_{\ii\in\Z^n}$.
By the very definition the quantities
\begin{equation}\label{d:rLRL}
\rL:=\frac 12\inf\{|x-y|:\,x,y\in\Lambda,\,x\neq y\},\quad
\RL:=\inf\{R>0:\,\Lambda\cap B_R(x)\neq \emptyset\;\;\forall x\in\Rn\}
\end{equation}
are finite and strictly positive; $\RL$ is called the 
\emph{covering radius} of $\Lambda$.
\begin{definition}
Let $\Lambda\subset\Rn$ be a Delone set, 
the {\rm Vorono\"i cell} of a point $\xii\in\Lambda$ is the set of 
points 
$$
V^{\ii}:=\{y\in\Rn:\,|y-\xii|\leq|y-\xkk|,\text{ for all }\ii\neq\kk\}.
$$
The {\rm Vorono\"i tessellation} induced by $\Lambda$ is the partition 
of $\Rn$  given by $\{V^{\ii}\}_{\ii\in\Z^n}$.
\end{definition}
In the following proposition we collect several interesting properties 
of Vorono\"i tessellations (see \cite[Propositions 2.7, 5.2]{Sen}).
\begin{proposition}\label{p:voroprop}
Let $\Lambda\subset\Rn$ be a Delone set and $\{V^{\ii}\}_{\ii\in\Z^n}$ its
induced Vorono\"i tessellation, then 
\begin{itemize}

\item[(i)] the $V^{\ii}$'s are convex polytopes fitting together 
along whole faces, and have no interior points in common;

\item[(ii)] if $V^{\ii}$ and $V^{\kk}$ share a vertex $z$, then $\xii$ 
and $\xkk$ lie on 
$\partial B_\rho(z)$ with $\Lambda\cap B_\rho(z)=\emptyset$, $\rho\leq\RL$. 

\end{itemize}
Hence, $\{V^{\ii}\}_{\ii\in\Z^n}$ is a {\rm tiling}, i.e. the $V^{\ii}$'s are 
closed, have no interior points in common and $\cup_{\ii}V^{\ii}=\Rn$.
More precisely, 
\begin{itemize}

\item[(iii)]  $\{V^{\ii}\}_{\ii\in\Z^n}$ is a {\rm normal} tiling: 
for each tile $V^{\ii}$ we have $B_{\rL}(\xii)\subseteq V^{\ii}\subseteq
\overline{B}_{\RL}(\xii)$; 

\item[(iv)] $\{V^{\ii}\}_{\ii\in\Z^n}$ is a {\rm locally finite} tiling: 
$\#(\Lambda\cap B_\rho(x))<+\infty$ for all $x\in\Rn$, $\rho>0$.
\end{itemize}
\end{proposition}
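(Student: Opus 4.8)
The plan is to prove Proposition~\ref{p:voroprop} by exploiting only the two defining properties of a Delone set, namely discreteness (with constant $\rL$) and homogeneity (with covering radius $\RL$), together with elementary convex geometry of half-spaces. First I would establish (i): each Vorono\"i cell $V^{\ii}$ is an intersection of closed half-spaces $\{y:\,|y-\xii|\le|y-\xkk|\}$, hence convex and closed, and since discreteness forces the points to be locally finite (which I will prove independently as part of (iv)), only finitely many of these half-space constraints are active near any bounded region, so $V^{\ii}$ is locally a finite intersection, i.e.\ a convex polytope. That two adjacent cells meet along a whole face is the standard fact that the bisecting hyperplane between $\xii$ and a neighbouring $\xkk$ contributes a facet to both cells; that interiors are disjoint follows because a point in the interior of $V^{\ii}$ is strictly closer to $\xii$ than to every other point, so cannot lie in $V^{\kk}$.

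Next I would prove (ii), the "empty circumsphere" property: if $z$ is a vertex shared by $V^{\ii}$ and $V^{\kk}$, then by definition of $V^{\ii}$ we have $|z-\xii|\le|z-\xll|$ for all $\lll$, and likewise with $\kk$, so $\xii$ and $\xkk$ are among the points of $\Lambda$ closest to $z$; setting $\rho:=|z-\xii|=|z-\xkk|$, no point of $\Lambda$ lies in the open ball $B_\rho(z)$, while $\xii,\xkk\in\partial B_\rho(z)$. The bound $\rho\le\RL$ is immediate from homogeneity: $B_{\RL}(z)$ must contain a point of $\Lambda$, and since $B_\rho(z)$ is empty we get $\rho\le\RL$ (with a harmless closure argument at equality, using the infimum definition of $\RL$ in \eqref{d:rLRL}). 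Then the tiling statement $\cup_\ii V^{\ii}=\Rn$ follows because every $y\in\Rn$ has a nearest point in $\Lambda$ — which exists precisely because $\Lambda\cap B_{\RL+1}(y)$ is nonempty and finite — and that nearest point witnesses $y\in V^{\ii}$.

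For (iii), the normality, the inclusion $V^{\ii}\subseteq\overline B_{\RL}(\xii)$ is the contrapositive of homogeneity applied at the point $\xii$: if $|y-\xii|>\RL$ then $B_{|y-\xii|}(\xii)$... more directly, for any $y$ with $|y-\xii|>\RL$, the ball $B_{\RL}(y)$ contains some $\xkk\in\Lambda$, and then $|y-\xkk|<\RL<|y-\xii|$, so $y\notin V^{\ii}$. For the inner inclusion $B_{\rL}(\xii)\subseteq V^{\ii}$: if $|y-\xii|<\rL$ then for any $\xkk\neq\xii$, using $|\xii-\xkk|\ge 2\rL$ and the triangle inequality, $|y-\xkk|\ge|\xii-\xkk|-|y-\xii|>2\rL-\rL=\rL>|y-\xii|$, so $\xii$ is strictly the closest point and $y\in V^{\ii}$. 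Finally (iv), local finiteness: by discreteness the balls $\{B_{\rL}(\xkk)\}_{\xkk\in\Lambda}$ are pairwise disjoint, so for any $x$ and $\rho>0$ the number of points of $\Lambda$ in $B_\rho(x)$ is bounded by the volume ratio $\Ln(B_{\rho+\rL}(x))/\Ln(B_{\rL})=((\rho+\rL)/\rL)^n$, hence finite; the same packing argument, applied with $\rho=\RL$ at every point, also retroactively justifies the local finiteness used in proving (i).

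The routine parts are the half-space/convexity manipulations and the volume-packing count; the only point requiring a little care — and the one I would treat as the main obstacle — is handling the boundary cases when distances equal $\RL$ or when a nearest point is non-unique (points lying on shared faces or vertices), i.e.\ making the passage between strict and non-strict inequalities in the definitions \eqref{d:rLRL} precise, and checking that the "vertex'' in (ii) genuinely forces $n$ affinely independent bisecting hyperplanes through $z$ so that $\rho$ is well defined as a common circumradius; this is where one must invoke that the $V^{\ii}$ are full-dimensional polytopes, which in turn uses (iii).
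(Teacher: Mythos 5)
The paper does not actually prove this proposition; it simply cites Senechal's book (\cite[Propositions 2.7, 5.2]{Sen}), so there is no in-paper argument to compare yours against. Your blind proof is correct and is the standard textbook argument: parts (iii) and (iv) are pure metric consequences of the definitions \eqref{d:rLRL}, (iv) by the volume-packing estimate using the pairwise disjoint balls $B_{\rL}(\xkk)$, and (i) follows from (iv) together with the boundedness $V^{\ii}\subseteq\overline{B}_{\RL}(\xii)$ from (iii), which reduces the intersection of bisecting half-spaces to a finite one.

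Two minor remarks. First, in (iii) you write that $B_{\RL}(y)$ contains a point of $\Lambda$; since $\RL$ is an infimum this is only guaranteed for the closed ball $\overline{B}_{\RL}(y)$ (take $R\downarrow\RL$ and use local finiteness), but that closed version suffices for your argument since you assumed $|y-\xii|>\RL$ strictly. Second, the ``main obstacle'' you flag in your last paragraph is not actually an obstacle for the statement as written: part (ii) is conditional on $z$ being a shared vertex, and your two-line metric argument (take $\rho:=|z-\xii|=|z-\xkk|$, note $\Lambda\cap B_\rho(z)=\emptyset$ from the Voronoi inequalities, and $\rho\leq\RL$ from the infimum definition) is already complete and never uses any geometry of vertices, affine independence of bisecting hyperplanes, or full-dimensionality of the cells. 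Those considerations would matter only if one wanted to show the tiling is face-to-face in a strong generic sense, which the proposition does not assert.
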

Further properties that will be repeatedly used in our 
analysis are summarized below. We omit their proofs 
since they are justified by elementary counting arguments. 
For any $A\in\AA(\Rn)$ we set
\begin{equation}\label{e:indices}
\IL(A):=\{\ii\in\Z^n:\,V^{\ii}\subseteq A\},\quad\quad 
\IIL(A):=\{\ii\in\Z^n:\,V^{\ii}\cap \partial A\neq\emptyset\}.
\end{equation}
\begin{proposition}\label{p:voroprop2}
Let $\Lambda\subset\Rn$ be a Delone set and $\{V^{\ii}\}_{\ii\in\Z^n}$ its
induced Vorono\"i tessellation. Then,
\begin{equation}\label{e:counting}
\omega_n\rL^n\#(\IL(A))\leq \Ln(A),\quad
\omega_n\rL^n\#(\IIL(A))\leq (\partial A)_{\RL},\quad
\Ln\left(A\setminus\cup_{\IL(A)}V^{\ii}\right)\leq (\partial A)_{\RL}.
\end{equation}
In particular, there exists a constant $c=c(n)>0$ such that 
for every $\ii\in\Z^n$, $m\in\N$ it holds
\begin{equation}\label{e:counting2}
\#\{\kk\in\IL(A):\,m\rL<|\xii-\xkk|_\infty\leq(m+1)\rL\}\leq 
c\, m^{n-1}.
\end{equation}
\end{proposition}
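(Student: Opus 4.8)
The goal is to prove Proposition~\ref{p:voroprop2}, which collects counting estimates for a Vorono\"i tessellation induced by a Delone set. The plan is to exploit the normality of the tiling proven in Proposition~\ref{p:voroprop}(iii), namely the two-sided inclusion $B_{\rL}(\xii)\subseteq V^{\ii}\subseteq\overline{B}_{\RL}(\xii)$, together with disjointness of the interiors and the fact that the tiles cover $\Rn$.

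\medskip

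First I would establish the three inequalities in \eqref{e:counting}. For the first one, observe that if $\ii\in\IL(A)$ then $V^{\ii}\subseteq A$, hence $B_{\rL}(\xii)\subseteq A$; since the balls $\{B_{\rL}(\xii)\}_{\ii}$ are pairwise disjoint (by the discreteness property, $|\xii-\xkk|\geq 2\rL$), summing their volumes gives $\omega_n\rL^n\#(\IL(A))=\sum_{\ii\in\IL(A)}\Ln(B_{\rL}(\xii))\leq\Ln(A)$. For the second inequality, if $\ii\in\IIL(A)$ then $V^{\ii}\cap\partial A\neq\emptyset$, and since $V^{\ii}\subseteq\overline{B}_{\RL}(\xii)$ has diameter at most $2\RL$, every point of $V^{\ii}$—in particular every point of $B_{\rL}(\xii)$—lies within distance $2\RL$ of $\partial A$; thus $B_{\rL}(\xii)\subseteq(\partial A)_{\RL}$ after possibly adjusting the constant, or more cleanly one writes $B_{\rL}(\xii)\subseteq(\partial A)_{2\RL}$ and absorbs the factor, and again disjointness of the balls yields the bound (here I would keep track of whether the intended neighborhood radius is $\RL$ or $2\RL$, adjusting the universal constant accordingly). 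For the third inequality, a point $x\in A\setminus\cup_{\IL(A)}V^{\ii}$ lies in some tile $V^{\ii}$ with $V^{\ii}\not\subseteq A$, so $V^{\ii}$ meets both $A$ and its complement, whence $V^{\ii}\cap\partial A\neq\emptyset$; therefore $x\in\cup_{\IIL(A)}V^{\ii}$, and each such tile is contained in $(\partial A)_{\RL}$ (its points being within $2\RL$ of $\partial A$), giving $\Ln(A\setminus\cup_{\IL(A)}V^{\ii})\leq\Ln((\partial A)_{\RL})$.

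\medskip

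For \eqref{e:counting2}, the idea is a volume-packing argument in an annular shell. Fix $\ii$ and $m$, and let $\kk\in\IL(A)$ satisfy $m\rL<|\xii-\xkk|_\infty\leq(m+1)\rL$. Then $\xkk$ lies in the closed cube shell $\{|x-\xii|_\infty\leq(m+1)\rL\}\setminus\{|x-\xii|_\infty\leq m\rL\}$, and the disjoint balls $B_{\rL}(\xkk)$ are all contained in a slightly fattened shell, say $\{|x-\xii|_\infty\leq(m+2)\rL\}\setminus\{|x-\xii|_\infty\geq(m-1)\rL\}$ for $m\geq 1$ (and the whole cube of side $6\rL$ when $m=0,1$). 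The Lebesgue measure of this fattened cubic shell is bounded by $c(n)\,\rL^n\,m^{n-1}$, since the difference of two cubes of sides comparable to $m\rL$ and differing by $O(\rL)$ has volume $O((m\rL)^{n-1}\rL)$. Dividing by $\omega_n\rL^n$ (the volume of each disjoint ball) produces the bound $c\,m^{n-1}$ with $c=c(n)$.

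\medskip

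I do not expect any genuine obstacle here: every estimate reduces to disjointness of the inscribed balls $B_{\rL}(\xii)$ combined with the circumscribed bound $V^{\ii}\subseteq\overline{B}_{\RL}(\xii)$, plus elementary volume comparisons. The only mild care needed is bookkeeping the constants and the exact radius of the tubular neighborhood $(\partial A)_{\RL}$ versus $(\partial A)_{2\RL}$ in \eqref{e:counting}—a point the authors evidently gloss over since they write ``we omit their proofs since they are justified by elementary counting arguments.'' Accordingly I would present the argument compactly, spelling out the disjoint-ball packing once and invoking it for all four inequalities.
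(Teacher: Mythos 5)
Your proof is correct and fills in exactly the ``elementary counting arguments'' that the paper declines to spell out: disjointness of the inscribed balls $B_{\rL}(\xii)$ (from the Delone discreteness condition), the circumscribed bound $V^{\ii}\subseteq\overline{B}_{\RL}(\xii)$, and volume comparison in cubic shells. You are also right to flag two small imprecisions in the paper's statement: the right-hand sides of the second and third inequalities in \eqref{e:counting} should of course be the Lebesgue measures $\Ln\bigl((\partial A)_{\RL}\bigr)$, and the correct tubular radius produced by the argument is $\rL+\RL\leq 2\RL$ rather than $\RL$; neither affects any later use, since all applications send $\RL\to 0$.
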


\subsection{Sobolev-Slobodeckij spaces}\label{sobslob} 

Let $A\subseteq\R^n$ be any bounded open Lipschitz set, 
$p\in(1,+\infty)$, $s\in(0,1)$ and $ps\in(1,n)$, by $\Wsp(A)$ we denote 
the usual Sobolev-Slobodeckij space, or Besov space $B^s_{p,p}(A)$. 
The space is Banach if equipped with the norm 
$\|u\|_{\Wsp(A)}=\|u\|_{L^p(A)}+|u|_{\Wsp(A)}$, where
$$
|u|_{\Wsp(A)}^p:=\int_{A\times A}\frac{|u(x)-u(y)|^p}{|x-y|^{n+sp}}dxdy\,.
$$
We will use several properties of fractional Sobolev spaces, 
giving precise references for those employed in the sequel 
in the respective places mainly referring to \cite{AD} and 
\cite{Tr0}. 

In the indicated ranges for the parameters $p$, $s$ it turns out 
that $\Wsp$ is a reflexive space (see \cite[Thm 4.8.2]{Tr0}), 
Sobolev embedding and Sobolev-Gagliardo-Nirenberg inequality hold
(see \cite[Chapter V]{AD}), and traces are well defined 
(see \eqref{e:traces} below).
We remark that the restriction on $s$ are necessary, since
otherwise $\Wsp(A)$ contains only constant functions if $s\geq 1$, 
while for $ps<1$ traces are not well defined (see also 
\eqref{e:traces} below).
The exclusion of the other cases is related to capacitary issues 
(see subsection~\ref{cap}).

Poincar\`e-Wirtinger and Poincar\`e inequalities in fractional 
Sobolev spaces are instrumental tools in the sequel. Thus,
we state explicitely those results in the form we need.
Their proof clearly follows from the usual argument by contradiction 
once $\Wsp$ is reflexive and is endowed with a trace operator.
\begin{theorem}\label{PW}
Let $n\geq 1$, $p\in(1,+\infty)$, and $s\in(0,1)$. 
Let $A\subset\Rn$ be a bounded, connected open set, and $O$ 
any measurable subset of $A$ with $\L^n(O)>0$. Then for any function 
$u\in\Wsp(A)$, 
  \begin{equation}
    \label{e:PW}
\|u-u_O\|^p_{L^p(A)}\leq c_{PW}
|u|^p_{\Wsp(A)},    
  \end{equation}
for a constant $c_{PW}=c_{PW}(n,p,s,O,A)$.
Moreover, for any $u\in\Wspo(U)$ we have 
 \begin{equation}
    \label{e:P}
\|u\|^p_{L^p(A)}\leq c_{P}
|u|^p_{\Wsp(A)},
 \end{equation}
for a constant $c_P=c_{P}(n,p,s,A)$.
\end{theorem}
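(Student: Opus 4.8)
The plan is to prove Theorem~\ref{PW} by the standard compactness-contradiction argument, using only the two structural facts the text has already granted us: reflexivity of $\Wsp(A)$ (hence weak compactness of bounded sequences), and the existence of a trace operator, together with the compact embedding $\Wsp(A)\hookrightarrow\hookrightarrow L^p(A)$ coming from Sobolev-Gagliardo-Nirenberg on the bounded Lipschitz set $A$. I would treat \eqref{e:PW} first and then deduce or re-run the argument for \eqref{e:P}.

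\emph{Proof of \eqref{e:PW}.} Suppose the inequality fails. Then for every $k\in\N$ there is $u_k\in\Wsp(A)$ with $\|u_k-(u_k)_O\|_{L^p(A)}^p > k\,|u_k|_{\Wsp(A)}^p$. Replacing $u_k$ by $(u_k-(u_k)_O)/\|u_k-(u_k)_O\|_{L^p(A)}$, which does not change the seminorm-to-$L^p$-norm ratio and makes both operations translation/scaling compatible, we may assume $(u_k)_O=0$, $\|u_k\|_{L^p(A)}=1$, and $|u_k|_{\Wsp(A)}^p<1/k\to0$. Hence $(u_k)$ is bounded in $\Wsp(A)$, so by reflexivity a subsequence converges weakly in $\Wsp(A)$ to some $u$, and by the compact embedding into $L^p(A)$ we may assume $u_k\to u$ strongly in $L^p(A)$; in particular $\|u\|_{L^p(A)}=1$ and $\int_O u\,\dx = \lim_k\int_O u_k\,\dx = 0$, so $u_O=0$. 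On the other hand, the Gagliardo seminorm $|\cdot|_{\Wsp(A)}$ is (the $L^p$-type norm of a function on $A\times A$ and hence is) lower semicontinuous with respect to $L^p(A)$ convergence — one can pass to a further subsequence converging a.e., so that $|u(x)-u(y)|/|x-y|^{(n+sp)/p}$ is an a.e. limit, and apply Fatou's lemma on $A\times A$. This yields $|u|_{\Wsp(A)}^p\le\liminf_k|u_k|_{\Wsp(A)}^p=0$, so $u$ has zero Gagliardo seminorm on the connected open set $A$. Since $sp>1>0$ (indeed any $s\in(0,1)$ suffices here) this forces $u$ to be a.e. constant on $A$: if the seminorm vanishes then $|u(x)-u(y)|=0$ for a.e. $(x,y)\in A\times A$, and connectedness of $A$ upgrades this to $u\equiv\mathrm{const}$ a.e. The constraint $u_O=0$ with $\Ln(O)>0$ then gives the constant is $0$, contradicting $\|u\|_{L^p(A)}=1$.

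\emph{Proof of \eqref{e:P}.} Run the same scheme for $u_k\in\Wspo(U)$ (here $A$ plays the role of the fixed reference set $U$, or more precisely we argue on $U$): normalize $\|u_k\|_{L^p(U)}=1$, $|u_k|_{\Wsp(U)}\to0$, extract a weak limit $u\in\Wspo(U)$ with $u_k\to u$ in $L^p(U)$, so $\|u\|_{L^p(U)}=1$ and $|u|_{\Wsp(U)}=0$, whence $u$ is constant. But membership in $\Wspo(U)$ means the trace of $u$ on $\partial U$ vanishes (this is where the well-definedness of traces in the range $sp>1$ is used), and $\Wspo$ is closed under weak limits, so $u\in\Wspo(U)$ has zero trace; a constant with zero boundary trace on a set with nonempty Lipschitz boundary is $0$, again contradicting $\|u\|_{L^p(U)}=1$. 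The dependence of the constants on the stated parameters is automatic from the contradiction argument (the constant depends on whatever data enters compactness and the normalization: $n,p,s$ and the sets).

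\emph{Main obstacle.} The only genuinely non-routine point is justifying that vanishing Gagliardo seminorm on a connected open set forces a function to be constant, and — more importantly for the compactness step — the lower semicontinuity of $|\cdot|_{\Wsp(A)}$ along $L^p$-convergent (sub)sequences, since $u_k\to u$ in $L^p$ only gives an a.e.-convergent subsequence and one must be slightly careful applying Fatou to the double integral over $A\times A$ (the passage to an a.e.\ subsequence must be done on $A$, then promoted to $A\times A$). Both are standard, but they are the two places where the argument is not a verbatim copy of the classical first-order Poincar\'e proof. Everything else — reflexivity, compact embedding, closedness of $\Wspo$, and the trace being continuous — is quoted from the references already cited in this subsection.
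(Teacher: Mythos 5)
Your proof is correct and matches the paper's own indication exactly: the paper does not write out a proof but says the result ``clearly follows from the usual argument by contradiction once $\Wsp$ is reflexive and is endowed with a trace operator,'' and that is precisely the compactness--normalization--Fatou scheme you carry out, with reflexivity used for the first inequality and the trace operator for the second. (One small cosmetic remark: the vanishing of the Gagliardo seminorm already forces $u$ to be a.e.\ constant directly via Fubini on the double integral over $A\times A$, since the nonlocal seminorm couples all pairs of points at once, so connectedness of $A$ is not actually needed at that step.)
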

\begin{remark}\label{r:PWscaled}
Let $(\Phi_t)_{t\in\mathscr{T}}$ be a family of biLipschitz maps 
on $A$ with $\sup_{ \mathscr{T}}(\Lip(\Phi_t)+\Lip(\Phi^{-1}_t))<+\infty$,
then a simple change of variables implies that the constants 
 $c_{PW}(n,p,s,\Phi_t(O),\Phi_t(A))$ and $c_{P}(n,p,s,\Phi_t(A))$ 
are uniformly-bounded in $t$. 
In particular, 
a scaling argument and H\"older inequality 
yield for any $z\in\R^n$ and $r>0$ and for some $c=c(n,p,s,O,A)>0$
  \begin{equation}\label{e:PWscaled}
\|u-u_{z+r O}\|^p_{L^p(z+r A)}\leq c\,r^{sp}|u|^p_{\Wsp(z+r A)}.    
  \end{equation}
A similar conclusion holds for Poincar\'e inequality \eqref{e:P}. 
\end{remark}
Next, we recall the fractional version of Hardy inequality (see 
\cite[Theorem 4.3.2/1, Remark 2 pp. 319-320]{Tr0} and \cite{Tr} 
for further comments). To this aim we introduce the space
$\tWsp(B_1):=\{u\in\Wsp(\R^n):\,\mathrm{spt}u\subset\overline{B}_1\}$.
It is clear that $C^\infty_0(B_1)$ is dense in $\tWsp(B_1)$ for any 
$p\in(1,+\infty)$ and that $\tWsp(B_1)\subseteq\Wsp_0(B_1)$. 
The latter inclusion is strict if $s-1/p\in\N$, and more precisely 
it holds
\begin{eqnarray}\label{e:traces}
\Wsp_0(B_1)=
\begin{cases}
\tWsp(B_1) &  \text{ if } s>1/p-1,\, s-1/p\notin\N \\
\Wsp(B_1) &   \text{ if } s\in(0,1/p].
\end{cases}
\end{eqnarray}
\begin{theorem}\label{hardy}
There exists a constant $c=c(n,p,s)$ such that for every 
$u\in \tWsp(B_1)$
we have 
$$
\int_{B_1}\frac{|u(x)|^p}{\dist(x,\partial B_1)^{sp}}\dx\leq
c\left(|u|_{\Wsp(B_1)}^p+\|u\|_{L^p(B_1)}^p\right).
$$
\end{theorem}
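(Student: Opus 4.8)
The plan is to reduce the fractional Hardy inequality on $B_1$ to the half-space version $\int_{\R^n_+}|v(x)|^p\,x_n^{-sp}\,dx\le c\,|v|_{\Wsp(\R^n_+)}^p$ for $v\in\Wsp(\R^n_+)$ with bounded support, which is the classical result of Triebel quoted in the statement. Near a boundary point of $B_1$ the distance $\dist(x,\partial B_1)$ is comparable, up to a factor bounded above and below, to the distance to the tangent hyperplane; since $\partial B_1$ is smooth, a finite atlas of $C^\infty$ boundary charts $\{(\Psi_k,O_k)\}_{k=1}^N$ flattening $\partial B_1$ locally, combined with a subordinate partition of unity $\{\chi_k\}$ plus one interior cut-off $\chi_0$ supported away from $\partial B_1$, lets us write $u=\sum_{k=0}^N\chi_k u$. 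First I would treat the interior piece $\chi_0 u$: there $\dist(x,\partial B_1)$ is bounded below by a positive constant, so the integrand is controlled by $c\|u\|_{L^p(B_1)}^p$ directly.

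For each boundary piece $\chi_k u$ ($k\ge1$), the bilipschitz (indeed $C^\infty$-diffeomorphic) change of coordinates $y=\Psi_k(x)$ maps $O_k\cap B_1$ onto an open subset of the half-space $\{y_n>0\}$ and sends $\partial B_1\cap O_k$ into $\{y_n=0\}$, with $\dist(x,\partial B_1)$ comparable to $y_n=\dist(\Psi_k(x),\partial\R^n_+)$ because the chart and its inverse are Lipschitz. Thus $\int_{B_1}\chi_k(x)^p|u(x)|^p\dist(x,\partial B_1)^{-sp}\dx$ is bounded, up to the Jacobian which is bounded above and below, by $\int_{\R^n_+}|v_k(y)|^p y_n^{-sp}\dy$ where $v_k:=(\chi_k u)\circ\Psi_k^{-1}$, extended by zero. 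Since $\chi_k u\in\tWsp(O_k\cap B_1)$ has support compactly inside $O_k$, $v_k$ lies in $\Wsp(\R^n_+)$ with compact support, and the half-space Hardy inequality gives $\int_{\R^n_+}|v_k|^p y_n^{-sp}\dy\le c\,|v_k|_{\Wsp(\R^n_+)}^p$.

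It then remains to transport the Gagliardo seminorm bound back: because $\Psi_k$ and $\Psi_k^{-1}$ are bilipschitz, a standard change of variables in the double Gagliardo integral shows $|v_k|_{\Wsp(\R^n_+)}^p\le c\,|\chi_k u|_{\Wsp(B_1)}^p$, and the product/Leibniz-type estimate for fractional seminorms—$|\chi_k u|_{\Wsp(B_1)}^p\le c(\|\chi_k\|_{\Lip}^p+\|\chi_k\|_{L^\infty}^p)(|u|_{\Wsp(B_1)}^p+\|u\|_{L^p(B_1)}^p)$, valid since $\chi_k$ is Lipschitz with bounded support—finishes the bound by $c(|u|_{\Wsp(B_1)}^p+\|u\|_{L^p(B_1)}^p)$. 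Summing over the finitely many $k=0,\dots,N$ yields the claim, with $c=c(n,p,s)$ absorbing the (fixed, geometry-dependent) chart constants.

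The main obstacle I anticipate is the fractional Leibniz estimate for $|\chi_k u|_{\Wsp}$: unlike the local case, multiplying by a cut-off interacts nontrivially with the nonlocal seminorm, and one must split the double integral into the region $|x-y|<1$ (where one uses $|\chi_k(x)u(x)-\chi_k(y)u(y)|\le|\chi_k(x)||u(x)-u(y)|+|u(y)||\chi_k(x)-\chi_k(y)|$ together with the Lipschitz bound $|\chi_k(x)-\chi_k(y)|\le\|\chi_k\|_{\Lip}|x-y|$ and the integrability of $|x-y|^{p-n-sp}$ near the diagonal, which holds precisely because $sp<p$) and the region $|x-y|\ge1$ (where the kernel is bounded and one gets an $L^p$ contribution). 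A secondary technical point is checking that $\chi_k u$, being supported compactly in $O_k\cap\overline{B}_1$ and touching the boundary only inside the chart, really does transform into an element of $\Wsp(\R^n_+)$ of compact support after flattening—this is where the smoothness of $\partial B_1$ and the fact that each $\chi_k u$ vanishes near $\partial(O_k\cap B_1)\setminus\partial B_1$ are used. Everything else is routine change of variables and the comparability $\dist(x,\partial B_1)\asymp\dist(\Psi_k(x),\partial\R^n_+)$.
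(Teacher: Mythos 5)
The paper does not prove Theorem~\ref{hardy} at all: it \emph{recalls} the fractional Hardy inequality, citing Triebel \cite[Theorem 4.3.2/1, Remark 2 pp.\ 319--320]{Tr0} and \cite{Tr}, and then uses it as a black box (see Remark~\ref{Hardyscaled}). So your proposal is doing work the paper deliberately outsources to the literature, which is perfectly legitimate, but it means there is no ``paper proof'' to compare against step for step.

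On the merits, the localization-and-flattening plan (interior cut-off plus finitely many boundary charts, fractional Leibniz for $|\chi_k u|_{\Wsp}$ with the near/far diagonal split, bilipschitz invariance of the Gagliardo seminorm) is sound in outline, and you correctly identify the Leibniz estimate as the main technicality. However, there is a genuine gap in the way you invoke the half-space model inequality. You write it as
$\int_{\R^n_+}|v|^p\,x_n^{-sp}\,dx\le c\,|v|_{\Wsp(\R^n_+)}^p$
``for $v\in\Wsp(\R^n_+)$ with bounded support.'' In the paper's regime $sp\in(1,n)$ this is false for such $v$ in general: take any $v\in C^\infty_0(\R^n)\big|_{\R^n_+}$ with $v(0)\neq 0$, which lies in $\Wsp(\R^n_+)$ with finite seminorm, yet $\int_{\R^n_+}|v|^p x_n^{-sp}\,dx$ diverges since $sp\ge 1$. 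The inequality holds precisely for $v$ with vanishing trace on $\{x_n=0\}$ (equivalently, for $v\in\tWsp(\R^n_+)$), and that vanishing is where the hypothesis $u\in\tWsp(B_1)$ -- as opposed to merely $u\in\Wsp(B_1)$ -- is essential. The right-hand side $|u|_{\Wsp(B_1)}^p+\|u\|_{L^p(B_1)}^p$ does not by itself encode any decay of $u$ at $\partial B_1$; a smooth $u$ not vanishing on $\partial B_1$ has a finite right-hand side but infinite left-hand side. Your sketch never brings in the fact that $u$, extended by zero to $\R^n$, has finite Gagliardo seminorm across $\partial B_1$, which is what forces the trace of $v_k$ on $\{y_n=0\}$ to vanish. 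To close the argument you must check that each flattened piece $v_k=(\chi_k u)\circ\Psi_k^{-1}$ lies in the correct zero-trace class (or in $\tWsp(\R^n_+)$), using that $u\in\Wsp(\R^n)$ with $\mathrm{spt}\,u\subset\overline{B}_1$ and $sp>1$ gives zero trace on $\partial B_1$. With that correction the strategy is viable, though of course the paper itself does not argue this way and simply cites the result.
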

\begin{remark}\label{Hardyscaled}
The usual scaling argument, Poincar\'e inequality \eqref{e:P}  
and Theorem~\ref{hardy} then yield
\begin{equation}
  \label{e:Hardy2}
  \int_{B_r}\frac{|u(x)|^p}{\dist(x,\partial B_r)^{sp}}\dx\leq
\cnps|u|_{\Wsp(B_r)}^p
\end{equation}
for every $r>0$ and $u\in \tWsp(B_r)$, for a constant $\cnps=c(n,p,s)$ 
independent from $r$.
\end{remark}

\subsection{Fractional capacities}\label{cap}

We recall the notion of variational capacity for fractional Sobolev
spaces and prove some properties relevant in the developments below.
Those properties, straightforward in the local case, require more work
in the non-local one. Let $p\in(1,+\infty)$ and $s\in(0,1)$ be given 
as before, for any set $T\subseteq\Rn$ define
\begin{equation}\label{e:capsp}
\csp(T):=\inf_{\{A\in\AA(\Rn):\,A\supseteq T\}}
\inf\left\{|u|^p_{\Wsp(\Rn)}:\,
u\in\Wsp(\Rn),\,u\geq 1\,\Ln \text{ a.e. on } A\right\},
\end{equation}
with the usual convention $\inf\emptyset=+\infty$.
The set function in \eqref{e:capsp} turns out to be a Choquet capacity
(see \cite[Chapter V]{AD}).
Recall that a property holds $\csp$ \emph{quasi everywhere}, 
in short $\csp$ q.e. on $A$, if it holds up to a set of $\csp$ zero. 
In particular, any function $u$ in $\Wsp(A)$, $A\in\AA(\Rn)$, 
has a \emph{precise representative} $\tilde{u}$ defined $\csp$ 
q.e. and the following formula holds (see \cite[Proposition 5.3]{AD})
\begin{equation}
  \label{e:capaltern}
  \csp(T):=\inf\left\{|w|_{\Wsp(\Rn)}^p:\,
w\in\Wsp(\Rn),\,\tilde{w}\geq 1\text{ q.e. on } T\right\}.
\end{equation}
Coercivity of the fractional norm is ensured only in the 
$L^{p^\ast}(\Rn)$ topology, $p^\ast:=np/(n-sp)$ is the 
Sobolev exponent relative to $p$ and $s$ (see \cite[Chapter V]{AD}).
Thus, a minimizer for the capacitary problem exists in the 
homogeneous space  
$\Ksp(\Rn)=\{u\in L^{p^\ast}(\Rn):\, |u|_{\Wsp(\Rn)}<+\infty\}$.
Uniqueness is guaranteed by the strict convexity of the 
fractional energy, thus the minimizer of \eqref{e:capsp}, 
\eqref{e:capaltern} will be denoted by $u^T$ and called 
the \emph{capacitary potential} for $T$.

\begin{remark}
If $ps>n$ points have positive capacity and $\Wsp$ is embedded 
into $C^0$. In this case it is well known that the homogenized 
obstacle problem turns out to be trivial, this is the reason 
why we required $ps<n$.
The borderline case $ps=n$ deserves an analysis similar to that 
we will perform but different in some details, so that   
its study is not dealt with in this paper (see for instance 
\cite{CM} and \cite{Sigalotti} in the local framework).
\end{remark}
Let us prove that set inclusion induces an ordering among 
capacitary potentials. As a byproduct we also show that admissible
functions in the capacitary problem can be taken with values in $[0,1]$. 
\begin{lemma}\label{l:cap}
If $T\subseteq F$, then $0\leq u^T\leq u^F\leq 1$ $\Ln$ a.e. on $\Rn$.
\end{lemma}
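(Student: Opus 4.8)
The strategy is the classical truncation argument from potential theory, adapted to the fractional setting: the minimum of two admissible functions is still admissible, and replacing a competitor with its truncation to $[0,1]$ does not increase the energy. First I would recall that the capacitary potentials $u^T$ and $u^F$ are the unique minimizers of the strictly convex functional $w\mapsto|w|^p_{\Wsp(\Rn)}$ over the respective admissible classes $\{\tilde w\geq 1 \text{ q.e. on } T\}$ and $\{\tilde w\geq 1 \text{ q.e. on } F\}$ inside $\Ksp(\Rn)$, using \eqref{e:capaltern}.

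To obtain $u^F\leq 1$, consider $v:=\min\{u^F,1\}=u^F-(u^F-1)_+$. Since $1\in\Wsp_{\mathrm{loc}}$ and, more to the point, the truncation operator is a contraction on $\Wsp(\Rn)$ in the sense that $|u(x)-u(y)|$ can only decrease when each value is replaced by $\min\{\cdot,1\}$ (pointwise, $|\min\{a,1\}-\min\{b,1\}|\leq|a-b|$), we get $|v|_{\Wsp(\Rn)}\leq|u^F|_{\Wsp(\Rn)}$, and $v\in L^{p^\ast}(\Rn)$ still holds since $0\leq v\leq u^F$ fails in general — here one must be slightly careful, because $u^F$ need not be nonnegative a priori, so I would simultaneously truncate from below by $0$: set $v:=\max\{\min\{u^F,1\},0\}$, which again only decreases $\Wsp$-differences and lies in $L^{p^\ast}$ because $0\leq v\leq|u^F|$. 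The precise representative $\tilde v$ equals $1$ q.e. on $F$ (truncation commutes with passing to precise representatives up to a $\csp$-null set), so $v$ is admissible for $F$; by uniqueness of the minimizer, $v=u^F$, whence $0\leq u^F\leq 1$ $\Ln$-a.e. The same argument applied to $T$ gives $0\leq u^T\leq 1$.

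For the ordering $u^T\leq u^F$, set $w:=\min\{u^T,u^F\}$. The key algebraic fact is again that for real numbers $|\min\{a,b\}-\min\{c,d\}|^p\leq\max\{|a-c|^p,|b-d|^p\}\leq|a-c|^p+|b-d|^p$ pointwise — more usefully, one splits $\Rn\times\Rn$ into the region where the two functions are ordered the same way at $x$ and at $y$ (there $|w(x)-w(y)|$ equals one of $|u^T(x)-u^T(y)|$, $|u^F(x)-u^F(y)|$) and the region where the ordering flips; on the flip region a short computation shows $|w(x)-w(y)|^p+|W(x)-W(y)|^p\leq|u^T(x)-u^T(y)|^p+|u^F(x)-u^F(y)|^p$ where $W:=\max\{u^T,u^F\}$. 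Integrating against the kernel $|x-y|^{-n-sp}$ gives the submodularity-type inequality $|w|^p_{\Wsp(\Rn)}+|W|^p_{\Wsp(\Rn)}\leq|u^T|^p_{\Wsp(\Rn)}+|u^F|^p_{\Wsp(\Rn)}$. Now $\tilde w\geq 1$ q.e. on $T$ (since both $\tilde u^T$ and $\tilde u^F$ are $\geq 1$ there, using $T\subseteq F$) and $\tilde W\geq 1$ q.e. on $F$, so $w$ is admissible for $T$ and $W$ for $F$; moreover both lie in $\Ksp(\Rn)$ since $0\leq w,W\leq u^F\vee u^T\in L^{p^\ast}$. By minimality $|u^T|^p\leq|w|^p$ and $|u^F|^p\leq|W|^p$ (all $\Wsp$-seminorms), and combined with the submodularity inequality both must be equalities; strict convexity then forces $w=u^T$ and $W=u^F$, i.e. $u^T\leq u^F$ $\Ln$-a.e.

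\textbf{Main obstacle.} The delicate point is the pointwise/integrated inequality $|w|^p+|W|^p\leq|u^T|^p+|u^F|^p$ for the $\Wsp$-seminorms under the $\min$/$\max$ operation: unlike the local Dirichlet case $p=2$ where this is an identity following from $\nabla(\min)+\nabla(\max)=\nabla u^T+\nabla u^F$ a.e., here one has a genuine inequality and must carefully handle the four-variable pointwise estimate, the measurability of the exceptional sets, and the fact that one works with precise representatives modulo $\csp$-null (not just $\Ln$-null) sets. I would isolate this as a pointwise lemma: for $a,b,c,d\in\R$, writing $a\wedge c,\ b\wedge d,\ a\vee c,\ b\vee d$, one has $|a\wedge c-b\wedge d|^p+|a\vee c-b\vee d|^p\leq|a-b|^p+|c-d|^p$, which reduces by symmetry to the single case $a\leq c$, $b\geq d$ and then to the elementary convexity inequality $|x_1|^p+|x_2|^p\leq|x_1+t|^p+|x_2-t|^p$ for appropriate signs — this is where the bulk of the routine work lies, and I would relegate it to a one-line computation rather than grind it out here.
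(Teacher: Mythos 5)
Your proof is correct, and the boundedness part ($0\le u^T,u^F\le 1$ by truncating to $[0,1]$ and invoking uniqueness) coincides with the paper's. The ordering part, however, is organized differently, and I think more cleanly. The paper argues by contradiction: it assumes $\Ln(\{u^F<u^T\})>0$, tests $u^T\vee u^F$ against the strict minimality of $u^F$ over $F$ to extract a strict inequality, rewrites both seminorms via the decomposition of $\Rn$ into $\{u^T\le u^F\}$ and $\{u^F<u^T\}$, and then shows this strict inequality would make $u^T\wedge u^F$ a strictly better competitor than $u^T$ for $\csp(T)$. You instead prove the submodularity inequality
$$
|u^T\wedge u^F|^p_{\Wsp(\Rn)}+|u^T\vee u^F|^p_{\Wsp(\Rn)}\le |u^T|^p_{\Wsp(\Rn)}+|u^F|^p_{\Wsp(\Rn)}
$$
directly and couple it with the two minimality inequalities plus strict convexity, which forces equalities and hence $u^T\wedge u^F=u^T$. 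Both routes rest on the \emph{same} elementary four-point inequality
$$
|a\wedge c-b\wedge d|^p+|a\vee c-b\vee d|^p\le|a-b|^p+|c-d|^p,
$$
which the paper states in the disguised form ``$\max_H\psi\le 0$'' for the auxiliary function $\psi(s,t)=|s-u^T(y)|^p+|u^T(x)-t|^p-|s-t|^p-|u^T(y)-u^T(x)|^p$ on $H=\{s\ge u^T(x),\,t<u^T(y)\}$; you correctly identified this as the crux and sketched its verification (monotonicity of $s\mapsto |s+t|^p-|s|^p$ for $t\ge 0$, $p\ge 1$). Your direct version is the standard potential-theoretic formulation: it is more symmetric, does not need a contradiction hypothesis, and makes transparent exactly where $T\subseteq F$ enters (it is what makes $u^T\wedge u^F$ admissible for $T$ and $u^T\vee u^F$ admissible for $F$). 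The only caveat worth making explicit, and you do flag it, is that $u^F$ is not a priori nonnegative when you truncate; truncating simultaneously from below by $0$ and above by $1$ resolves this. Aside from the cosmetic typo in the paper where $|u^T(x)-u^F(y)|$ appears without the exponent $p$, your argument and the paper's are in full agreement on the underlying mechanism.
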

\begin{proof}
First, take note that for all $u$, $v\in L^1_{\mathrm{loc}}(\Rn)$ we have
\begin{eqnarray}\label{e:trunc}
\begin{array}{c}
|(u\vee v)(x)-(u\vee v)(y)|\leq|u(x)-u(y)|\vee|v(x)-v(y)|,\\
|(u\wedge v)(x)-(u\wedge v)(y)|\leq|u(x)-u(y)|\vee|v(x)-v(y)|.
\end{array}
\end{eqnarray}
In particular, uniqueness of the capacitary potential $u^T$ and by 
choosing $v\equiv 0$ in \eqref{e:trunc}$_1$ and 
$v\equiv 1$ in \eqref{e:trunc}$_2$ imply that 
$0\leq u^T\leq 1$ $\Ln$ a.e. on $\Rn$ for any subset $T\subset\Rn$.

Moreover, \eqref{e:trunc} yields that $u^T\vee u^F$ and 
$u^T\wedge u^F\in\Ksp(\Rn)$. Set $U_T=\{u^T\leq u^F\}$ and 
$U_F=\{u^F<u^T\}$, and assume by contradiction that $\Ln(U_F)>0$.
By taking $u^T\vee u^F$ as test function in the minimum problem 
$\csp(F)$ and recalling the strict minimality of $u^F$, we infer
$|u^F|^p_{\Wsp(\Rn)}<|u^T\vee u^F|^p_{\Wsp(\Rn)}$. An easy 
computation then leads to 
$$
|u^F|^p_{\Wsp(U_F)}+2\int_{U_T\times U_F}\frac{|u^F(x)-u^F(y)|}{|x-y|^{n+sp}}dxdy
<
|u^T|^p_{\Wsp(U_F)}+2\int_{U_T\times U_F}\frac{|u^T(x)-u^F(y)|}{|x-y|^{n+sp}}dxdy.
$$
The latter inequality can be used to estimate the fractional norm of  
$u^T\wedge u^F$ as follows
\begin{eqnarray}\label{e:dupalle}
\lefteqn{|u^T\wedge u^F|^p_{\Wsp(\Rn)}<|u^T|^p_{\Wsp(\Rn)}}\\&&
+2\int_{U_T\times U_F}\frac{|u^T(x)-u^F(y)|^p+|u^F(x)-u^T(y)|^p
-|u^T(x)-u^T(y)|^p-|u^F(x)-u^F(y)|^p}{|x-y|^{n+sp}}dxdy.\notag
\end{eqnarray}
We claim that the second term on the rhs in \eqref{e:dupalle} 
is non-positive, this would imply that the strict inequality 
sign holds above, which in turn would give a contradiction since 
$u^T\wedge u^F$ is a test function for the capacitary problem 
related to $T$.

To conclude consider the auxiliary function
$\psi(s,t):=|s-u^T(y)|^p+|u^T(x)-t|^p-|s-t|^p-|u^T(y)-u^T(x)|^p$ 
and the set $H:=\{(s,t):\,s\geq u^T(x),\;t<u^T(y)\}$. 
Elementary calculations yield that $\max_H\psi\leq 0$.
Take note that the numerator of the integrand of the second term 
in the rhs of \eqref{e:dupalle} equals to 
$\psi(u^F(x),u^F(y))$ and $u^F(x)\geq u^T(x)$ $\Ln$ a.e. $x\in U_T$, 
and $ u^F(y)<u^T(y)$ $\Ln$ a.e. $y\in U_F$. 
\end{proof}
In the sequel we are interested into relative capacities,
for which we introduce two different notions. The first one is 
useful in the $\Gamma$-liminf inequality, the second in the 
$\Gamma$-limsup inequality, respectively. 
For every $0<r\leq R$ set
$$
\csp(T,B_R;r):=\inf\left\{|w|_{\Wsp(B_R)}^p:\,
w\in\Wsp(\Rn),\, w=0 \text{ on }\Rn\setminus \overline{B}_r,\,
\tilde{w}\geq 1\text{ q.e. on } T\right\},
$$
and
$$
\CSP(T,B_R):=\inf\left\{|w|_{\Wsp(\Rn)}^p:\,
w\in\Wsp(\Rn),\, w=0 \text{ on }\Rn\setminus \overline{B}_R,\,
\tilde{w}\geq 1\text{ q.e. on } T\right\}.
$$
To prove the convergence of relative capacities to the global one
we introduce some notation to simplify the calculations below: 
for any ${\mathcal L}^{n}$-measurable function $w$
and any ${\mathcal L}^{n\times n}$-measurable set $E\subseteq U\times U$ 
consider the \emph{locality defect} of the $\Wsp$ seminorm 
$$
\defectsp(w,E):=\int_{E}\frac{|w(x)-w(y)|^p}{|x-y|^{n+sp}}dxdy\,.
$$
The terminology is justified since given two disjoint subdomains 
$A,B\subseteq U$, it holds
\begin{equation}\label{locdef}
|w|_{\Wsp(A\cup B)}^p=|w|_{\Wsp(A)}^p+|w|_{\Wsp(B)}^p+
2\defectsp(w,A\times B).
\end{equation}
In particular, $\defectsp(w,A\times A)=|w|_{\Wsp(A)}^p$.

We are now in a position to prove the claimed converge result for 
relative capacities. 
Actually, we show uniform convergence for families of equi-bounded 
sets. A different argument yielding pointwise convergence will
be exploited in the generalizations of subsection \ref{s:generalizations}
(see Lemma~\ref{loccap2}).
The latter is sufficient for the proof of Theorem~\ref{main}; 
the advantage of the approach below is that it can be carried over 
straightforward to the case of obstacles with random sizes and shapes 
for which uniform convergence is necessary (see subsection~\ref{caf-mell}).
\begin{lemma}\label{loccap}
For all $\rho>0$ it holds 
\begin{equation}\label{e:cap1}
\lim_{r\to+\infty}\sup_{T\subseteq B_\rho}|\CSP(T,B_r)-\csp(T)|=0.
\end{equation}
Moreover, there exists a constant $\cnps=c(n,s,p)$ such that 
for all $0<\rho<r<R$
\begin{equation}\label{e:cap2}
\sup_{T\subseteq B_\rho}\left(\csp(T)-\csp(T,B_R;r)\right)\leq
\frac{\cnps r^{sp}}{(R-r)^{sp}}\CSP(B_\rho,B_r).
\end{equation}
In addition, if $R(r)/r\to+\infty$ as $r\to+\infty$ we have
\begin{equation}\label{e:cap3}
\lim_{r\to+\infty}\sup_{T\subseteq B_\rho}|\csp(T)-\csp(T,B_{R(r)};r)|=0.
\end{equation}
\end{lemma}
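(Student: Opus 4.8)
The plan is to prove the three claims in order, since \eqref{e:cap3} is an immediate consequence of \eqref{e:cap1} and \eqref{e:cap2}, and to exploit throughout the monotonicity $\csp(T,B_R;r)\leq\CSP(T,B_R)\leq\csp(T)$ which holds because each successive admissible class is larger: a competitor vanishing outside $\overline B_r$ is in particular a competitor vanishing outside $\overline B_R$ (for $r\leq R$), and a competitor vanishing outside $\overline B_R$ is admissible for the global capacity. Hence in all three estimates only one inequality needs genuine work, namely producing, from a near-optimal global competitor, a modified competitor with the required support constraint and controlled extra energy.

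For \eqref{e:cap1} I would fix $T\subseteq B_\rho$ and a competitor $u$ for $\csp(T)$ with $|u|_{\Wsp(\Rn)}^p\leq\csp(T)+\eta$; by Lemma~\ref{l:cap} (or rather its proof by truncation) we may take $0\leq u\leq1$ and, replacing $u$ by the capacitary potential, also $\csp(T)\leq\omega_n\rho^{\cdots}$-type bounds show the family is equibounded, which is what makes the $\sup$ over $T\subseteq B_\rho$ harmless. Now multiply $u$ by a cut-off $\varphi$ between $B_{r/2}$ and $B_r$. Then $u\varphi=0$ outside $\overline B_r$ and $\widetilde{u\varphi}\geq1$ q.e. on $T$ (since $T\subseteq B_\rho\subseteq B_{r/2}$ once $r>2\rho$), so $u\varphi$ is admissible for $\CSP(T,B_r)$. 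Using \eqref{locdef} to split $|u\varphi|_{\Wsp(\Rn)}^p$ and estimating $|(u\varphi)(x)-(u\varphi)(y)|\leq|u(x)-u(y)|+|u(y)||\varphi(x)-\varphi(y)|$, the error reduces to $\defectsp$-type integrals of $u$ over $(\Rn\setminus B_{r/2})\times B_{r/2}$ plus a term $\int|u|^p|\nabla\varphi|^p$-type expression bounded via $\Lip(\varphi)\leq c/r$ and the singular-kernel estimates of Lemma~\ref{Adams}. All these tend to $0$ as $r\to\infty$ \emph{uniformly} in $T\subseteq B_\rho$ because $u$ lives essentially inside a fixed ball (its mass at infinity is uniformly small once $\csp(T)$ is uniformly bounded). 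Then $\CSP(T,B_r)\leq\csp(T)+\eta+o(1)$, and letting $\eta\to0$ and taking the sup gives \eqref{e:cap1}.

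For \eqref{e:cap2} I would instead start from the capacitary potential $w=u^{B_\rho}$ realizing $\CSP(B_\rho,B_r)$ (extended by $0$ outside $\overline B_r$), which satisfies $\widetilde w\geq1$ q.e. on $B_\rho\supseteq T$, and cut it off between $B_r$ and $B_R$ using a cut-off $\varphi$ with $\Lip(\varphi)\leq 1/(R-r)$; the function $w\varphi$ is admissible for $\csp(T,B_R;r)$. Comparing $|w|_{\Wsp(\Rn)}^p=\CSP(B_\rho,B_r)$ with $|w\varphi|_{\Wsp(\Rn)}^p$, the difference is controlled, again via \eqref{locdef} and the elementary inequality for $|(w\varphi)(x)-(w\varphi)(y)|$, by a term of the form $c\,(R-r)^{-sp}\int_{B_r}|w(x)|^p\dist(x,\partial B_r)^{\cdots}$-type expression together with long-range $\defectsp$ contributions; here the fractional Hardy inequality in the scaled form \eqref{e:Hardy2} converts $\int_{B_r}|w|^p/\dist(x,\partial B_r)^{sp}$ into $\cnps|w|_{\Wsp(B_r)}^p=\cnps\CSP(B_\rho,B_r)$, producing the factor $r^{sp}/(R-r)^{sp}$. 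This is the step I expect to be the main obstacle: in the local case the cut-off error is simply $\int|w|^p|\nabla\varphi|^p$ and Hardy closes it immediately, whereas here one must carefully bound the nonlocal double integral $\defectsp(w\varphi,(\Rn\setminus B_r)\times B_r)$ and the mixed terms, reorganizing them so that only $\dist(x,\partial B_r)$-weighted norms of $w$ appear, which requires the singular-kernel estimates of Lemma~\ref{Adams} to handle the contribution of faraway points; getting the homogeneity in $r$ and $R-r$ exactly right is the delicate accounting.

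Finally, \eqref{e:cap3}: for $T\subseteq B_\rho$ we have, by monotonicity and \eqref{e:cap2},
\[
0\leq\csp(T)-\csp(T,B_{R(r)};r)\leq\frac{\cnps r^{sp}}{(R(r)-r)^{sp}}\CSP(B_\rho,B_r).
\]
Since $R(r)/r\to\infty$, we have $r/(R(r)-r)\to0$, so it suffices to check that $\CSP(B_\rho,B_r)$ stays bounded (indeed it decreases) in $r$; but $\CSP(B_\rho,B_r)\leq\csp(B_\rho)<+\infty$ by monotonicity, hence the right-hand side is $o(1)$ uniformly in $T\subseteq B_\rho$, which is \eqref{e:cap3}. \qed
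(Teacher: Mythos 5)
Your opening monotonicity claim is backwards and this error propagates. You assert $\csp(T,B_R;r)\leq\CSP(T,B_R)\leq\csp(T)$, but the correct relations are $\csp(T)\leq\CSP(T,B_r)$ (restricting competitors to be supported in $\overline B_r$ \emph{raises} the infimum) and $\csp(T,B_R;r)\leq\CSP(T,B_r)$ (measuring the energy only on $B_R$ lowers it); the quantities $\csp(T,B_R;r)$ and $\csp(T)$ are not directly comparable. This matters in your proof of \eqref{e:cap3}: you write $0\leq\csp(T)-\csp(T,B_{R(r)};r)$ and then invoke only \eqref{e:cap2}, but since the sign of this difference is not determined, you also need the other direction, which comes from $\csp(T,B_R;r)-\csp(T)\leq\CSP(T,B_r)-\csp(T)$ together with \eqref{e:cap1} — exactly as the paper combines all three facts.

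For \eqref{e:cap1} your cut-off strategy is genuinely different from the paper's, and it has a real gap. The paper does not cut off: it composes $u^T$ with the truncation $\psi_\delta(t)=(t-\delta)/(1-\delta)\vee 0$ and observes that $\{u^T>\delta\}\subseteq\{u^{B_\rho}>\delta\}=B_{R_\delta}$ by Lemma~\ref{l:cap} and the radial monotonicity of $u^{B_\rho}$; the resulting competitor is automatically supported in a ball whose radius depends only on $\delta$ and $\rho$, with energy inflated by the factor $(1-\delta)^{-p}$, giving uniformity in $T$ for free and with no error terms at all. Your approach instead multiplies $u^T$ by a cut-off at scale $r$ and must show that all nonlocal error terms — in particular the locality defect $\defectsp(u^T,B_{r/2}\times(\Rn\setminus B_{r/2}))$ — vanish \emph{uniformly} over $T\subseteq B_\rho$. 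The pointwise comparison $u^T\leq u^{B_\rho}$ controls the values of $u^T$ but not its oscillations, so it does not give uniform smallness of these seminorm tails; the uniformity claim "because $u$ lives essentially inside a fixed ball" is precisely the point that needs proof and is left unjustified. For \eqref{e:cap2} your construction is confused: the minimizer of $\CSP(B_\rho,B_r)$ is already supported in $\overline B_r$, so multiplying it by a cut-off equal to $1$ on $B_r$ and $0$ outside $B_R$ changes nothing, and the comparison "$|w|_{\Wsp(\Rn)}^p$ versus $|w\varphi|_{\Wsp(\Rn)}^p$" is vacuous. The paper's argument contains no cut-off at all: it takes any competitor $u$ for $\csp(T,B_R;r)$, observes it is also a competitor for $\csp(T)$, splits $|u|_{\Wsp(\Rn)}^p$ via \eqref{locdef}, uses $u\equiv0$ off $\overline B_r$ so that the $B_R\times B_R^c$ cross term reduces to $\int_{B_r}|u(x)|^p\int_{B_R^c}|x-y|^{-(n+sp)}\,dy\,dx$, applies Lemma~\ref{Adams}(ii) to the inner integral, uses $\dist(x,\partial B_R)\geq R-r$ on $B_r$, and closes with the scaled Poincar\'e inequality from Remark~\ref{r:PWscaled}. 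You should redo both \eqref{e:cap1} and \eqref{e:cap2} along these lines.
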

\begin{proof}
It is clear from the very definitions that $(0,+\infty)\ni r\to\CSP(T,B_r)$ 
is monotone decreasing, and moreover that $\csp(T,B_R;r)\leq\CSP(T,B_r)$,
$\csp(T)\leq \CSP(T,B_r)$.

Let us first prove \eqref{e:cap1}. To this aim take
$u^T$, $u^{B_\rho}\in\Ksp(\Rn)$ the capacitary potentials of $T$ and $B_\rho$, 
respectively; by Lemma~\ref{l:cap} we have $0\leq u^T\leq u^{B_\rho}\leq 1$
$\Ln$ a.e. on $\Rn$. In addition, $u^{B_\rho}$ is radially symmetric and 
decreasing (to zero) since the fractional norm is stricly convex, 
rotation invariant and decreasing under radial rearrangements 
(for the last result see \cite[Section 9]{Alm-Lieb}).

With fixed $\delta>0$ consider the Lipschitz map
$\psi_\delta(t):=\frac{t-\delta}{1-\delta}\vee 0$, 
$\Lip(\psi_\delta)\leq(1-\delta)^{-1}$, and set
$w_\delta(x):=\psi_\delta(u^T(x))$. 
Up to $\Ln$ negligible sets, $\{w_\delta>0\}=\{u^T>\delta\}\subseteq
\{u^{B_\rho}>\delta\}\subseteq B_{R_\delta}$, for some $R_\delta\to+\infty$
as $\delta\to 0^+$. Then 
$w_\delta\in\Wsp(\Rn)$ with 
$$
|w_\delta|^p_{\Wsp(\Rn)}\leq\frac{1}{(1-\delta)^p}|u^T|^p_{\Wsp(\Rn)}=
\frac{1}{(1-\delta)^p}\csp(T),\quad
\|w_\delta\|_{L^p(\Rn)}\leq\frac{1}{1-\delta}\|u^T\|_{L^p(B_{R_\delta})}.
$$
Moreover, $\tilde{w}_\delta\geq 1$ q.e. on $T$.
Since $\csp(\cdot)$ is an increasing set function, we infer
$$
0\leq\CSP(T,R_\delta)-\csp(T)\leq\left(\frac{1}{(1-\delta)^p}-1\right)
\csp(T)\leq\left(\frac{1}{(1-\delta)^p}-1\right)\csp(B_\rho).
$$
In conclusion, \eqref{e:cap1} follows from the monotonicity 
properties of $r\to\CSP(T,B_r)$.

To prove \eqref{e:cap2} take note that any admissible function $u$ 
for the minimum problem defining $\csp(T,B_R;r)$ is admissible 
for the one defining $\csp(T)$, too. Furthermore, for some constant
$\cnps=c(n,s,p)$ it holds 
\begin{eqnarray*}
\lefteqn{\csp(T)\leq|u|^p_{\Wsp(\Rn)}\stackrel{\eqref{locdef}}{=}
|u|^p_{\Wsp(B_R)}+|u|^p_{\Wsp(B_R^c)}+2\defectsp(u,B_R\times B_R^c)}\\&&
\stackrel{u|_{B_r^c}=0}{=}
|u|^p_{\Wsp(B_R)}+2\int_{B_r}dx\int_{B^c_R}\frac{|u(x)|^p}{|x-y|^{n+sp}}dy
\stackrel{(ii) \text{ Lemma } \ref{Adams}}{\leq}
|u|^p_{\Wsp(B_R)}+\cnps\int_{B_r}\frac{|u(x)|^p}{\dist^{sp}(x,\partial B_R)}dx\\
&&\leq|u|^p_{\Wsp(B_R)}+\frac{\cnps}{(R-r)^{sp}}\int_{B_r}|u(x)|^pdx
\leq|u|^p_{\Wsp(B_R)}+\frac{\cnps r^{sp}}{(R-r)^{sp}}|u|^p_{\Wsp(B_r)}.
\end{eqnarray*}
In the last inequality we used the scaled version of 
Poincar\`e inequality \eqref{e:P} as follows from Remark~\ref{r:PWscaled}.
By passing to the infimum on the admissible test functions we infer
$$
\csp(T)-\csp(T,B_R;r)\leq\frac{\cnps r^{sp}}{(R-r)^{sp}}\csp(T,B_R;r)\leq
\frac{\cnps r^{sp}}{(R-r)^{sp}}\CSP(T,B_r).
$$
We deduce statement \eqref{e:cap2} since $\CSP(\cdot,B_r)$
is a monotone increasing set function.

Eventually, \eqref{e:cap3} follows at once from \eqref{e:cap1},
\eqref{e:cap2}, and the fact that $\csp(T,B_R;r)\leq\CSP(T,B_r)$. 
\end{proof}
\begin{remark}
Clearly estimate \eqref{e:cap2} blows up for $r=R$. In such a case 
by using Hardy inequality one can only prove that 
$\csp(T)\leq(1+c)\csp(T,B_R,R)$ for some $c=c(n,s,p)>0$. 
\end{remark}
\begin{remark}\label{r:caplodef}
If $\xi_r$ is a $(1/r)$-minimizer for $\CSP(T,B_r)$ and 
$R(r)/r\to+\infty$ as $r\to+\infty$, then
$$  
 \lim_{r\to+\infty}\defectsp(\xi_r,B_{R(r)}\times {B}_{R(r)}^c)=0.
$$
Indeed, being $\xi_r$ admissible for the problem defining
$\csp(T,B_R;r)$, for all $r<R$, with 
$$
\csp(T,B_R;r)\leq|\xi_r|^p_{\Wsp(B_R)}\leq
|\xi_r|^p_{\Wsp(\Rn)}\leq \CSP(T,B_r)+\frac 1r,
$$ 
from \eqref{e:cap3} we conclude. 
\end{remark}

\section{Deterministic Setting}\label{s:determ}

\subsection{Statement of the Main Result}

Consider Delone sets $\Lambda_j=\{\xiij\}_{\ii\in\Z^n}$, and let 
$\Rj:=R_{\Lambda_j}$, $\Ieps(A):={\mathcal I}_{\Lambda_j}(A)$,
$\IIeps(A):={\mathscr I}_{\Lambda_j}(A)$, for all $A\in\AA(U)$
(see \eqref{e:indices} for the definition of $\Ieps$, $\IIeps$). 
Fix $\rj\in(0,r_{\Lambda_j}]$, and assume that the $\rj$'s and 
$\Lambda_j$'s are such that 
\begin{equation}\label{e:Rj/rj}
\lim_j\rj=0,\quad (1\leq)\limsup_j(\Rj/\rj)<+\infty,
\end{equation}
\begin{equation}\label{e:lambdaj}
\lim_j\#\Ieps(U)\,\rj^n=\theta\in(0,+\infty),
\end{equation}
\begin{equation}\label{e:unif-distrib}
\mu_j:=\frac 1{\#\Ieps(U)}\sum_{\ii\in\Ieps(U)}\delta_{\xiij}\rightarrow
\mu:=\beta \Ln\res U\quad\quad w^*\hbox{-}C_b(U),
\end{equation}
for some $\beta\in L^1(U,[0,+\infty])$ with $\|\beta\|_{L^1(U)}=1$.
\begin{remark}
Condition \eqref{e:lambdaj} implies that $\rj\sim r_{\Lambda_j}$
since $\limsup_j\#\Ieps(U)\,r_{\Lambda_j}^n<+\infty$ by \eqref{e:counting}$_1$
and $\liminf_j\#\Ieps(U)\,r_{\Lambda_j}^n>0$  by \eqref{e:counting}$_2$
(see also Remark~\ref{r:triviallimit}).
\end{remark}
\begin{remark}\label{r:tight}
It is well known that the $w^*\hbox{-}C_b(U)$ convergence of 
$(\mu_j)_{j\in\N}$ to $\mu$ in \eqref{e:unif-distrib} can be restated as
\begin{equation}\label{e:unif-distrib2}
\mu_j(A)\to\mu(A)\quad
\text{ for all } A\in\AA(U) \text{ with } \mu(U\cap\partial A)=0.
\end{equation}
Proposition~\ref{p:voroprop2} and conditions 
\eqref{e:Rj/rj}, \eqref{e:lambdaj} imply that 
assumption \eqref{e:unif-distrib} is always satisfied 
up to subsequences. First, let us show that any $w^*\hbox{-}C_0^0(U)$ 
cluster point of the probability measures $(\mu_j)_{j\in\N}$ is 
absolutely continuous w.r.to $\Ln\res U$. 
For, let $\mu_j$ converge to $\mu$ $w^*\hbox{-}C_0^0(U)$, then
since $B_{\rj}(\xiij)\subseteq\Vij$, for 
every $\varphi\in C_0^0(U)$ and $\delta>0$ uniform continuity 
yields for $j$ sufficiently big 
$$
\int_U\varphi\,d\mu_j=\frac{1}{\#\Ieps(U)}\sum_{\Ieps(U)}\varphi(\xiij)
\leq\frac{1}{\#\Ieps(U)}\sum_{\Ieps(U)}\fint_{\Vij}\varphi\,dx+\delta
\leq\frac{1}{\omega_n\rj^n\#\Ieps(U)}\int_U|\varphi|\,dx
+\delta.
$$
By taking as test functions $\pm\varphi$ and first letting 
$j\to+\infty$ and then $\delta\to 0^+$ we infer
$$
\left|\int_U\varphi\,d\mu\right|\leq
\frac{1}{\omega_n\theta}\int_U|\varphi|\,dx.
$$
The latter inequality implies $\mu\ll\Ln\res U$.
Actually, since $\Vij\subseteq\overline{B}_{\Rj}(\xiij)$ 
arguing as above it follows $\beta\in L^\infty(U)$ with 
$(\liminf_j\rj/\Rj)^n\leq\omega_n\theta \beta(x)\leq 1$ 
for $\L^n$ a.e. $x\in U$. 

Furthermore, take $A^\prime$, $A\in\AA(U)$ such that 
$A^\prime\subset\subset A\subset\subset U$, 
then for $j$ sufficiently big we have
\begin{multline*}
\mu_j(\overline{A})=\frac{\#\{\ii\in\Ieps(U):\,\xiij\in\overline{A}\}}
{\#\Ieps(U)}\geq\frac{\#\Ieps(A^\prime)+\#\IIeps(A^\prime)}{\#\Ieps(U)}\\
=1-\frac{\#\Ieps(U\setminus\overline{A^\prime})}{\#\Ieps(U)}
\stackrel{\eqref{e:counting}_1,\eqref{e:counting}_2}{\geq} 
1-\frac{\Ln(U\setminus\overline{A^\prime})}
{\left(\frac{r_{\Lambda_j}}{R_{\Lambda_j}}\right)^n\Ln(U)
-(\partial U)_{R_{\Lambda_j}}}.
\end{multline*}
Hence, equi-tightness of $(\mu_j)_{j\in\N}$ follows from
$$
\liminf_j\mu_j(\overline{A})\geq 1-\limsup_j
\left(\frac{R_{\Lambda_j}}{r_{\Lambda_j}}\right)^n
\frac{\Ln(U\setminus\overline{A^\prime})}{\Ln(U)}.
$$
Thus, Prokhorov theorem gives the w$^\ast$-$C_b(U)$ compactness 
in \eqref{e:unif-distrib} up to subsequences. 
\end{remark}

With fixed a bounded set $T$, for all $j\in\N$ 
define the \emph{obstacle set} $\Kj\subseteq\Rn$ by $\Kj=\cup_{\ii\in\Z^n}\Kij$ 
where 
\begin{equation}\label{e:Kij}
\Kij:=\xiij+\lambdaj T,\quad \text{ and } \lambdaj:=\rj^{n/(n-sp)}.
\end{equation}
Take note that $\Kij\subseteq\Vij$ for all $\ii\in\Z^n$ and $j\in\N$.

Consider the functionals $\FFepsj:L^p(U)\to[0,+\infty]$ defined by
\begin{eqnarray}  \label{e:fapprox}
  \FFepsj(u)=
  \begin{cases}
\displaystyle{|u|_{\Wsp(U)}^p} & \text{ if } u\in \Wsp(U),\,
\tildeu=0\,\,  \csp \text{ q.e. on } \Kj\cap U\\
+\infty & \text{ otherwise. }
  \end{cases}
\end{eqnarray}
\begin{theorem}\label{main}
Let $U\in\AA(\Rn)$ be bounded and connected with Lipschitz regular 
boundary; and assume that \eqref{e:Rj/rj}-\eqref{e:unif-distrib} are 
satisfied.

The sequence $(\FFepsj)_{j\in\N}$ $\Gamma$-converges in the $L^p(U)$
topology to $\FF:L^p(U)\to[0,+\infty]$ defined by
\begin{equation}\label{e:Glimit}
\FF(u)=|u|^p_{\Wsp(U)}+\theta\,\csp(T)\int_{U}|u(x)|^p\beta(x)\,dx
\end{equation}
if $u\in \Wsp(U)$, $+\infty$ otherwise in $L^p(U)$.
\end{theorem}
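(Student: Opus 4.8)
The plan is to prove the $\Gamma$-convergence statement by separately establishing the liminf and limsup inequalities, and in both cases to reduce the analysis to functions that are constant on annuli surrounding each obstacle $\Kij$, where the capacitary problems can be made explicit. The role of Lemmas~\ref{tecnico2} and \ref{joining} (announced in the introduction) is precisely to perform this reduction while controlling the long-range interaction terms produced by the non-locality of $|\cdot|_{\Wsp}$. Throughout, the key quantitative inputs are: the counting estimates of Proposition~\ref{p:voroprop2}, the scaled Poincar\'e--Wirtinger inequality \eqref{e:PWscaled}, the scaled Hardy inequality \eqref{e:Hardy2}, the kernel estimates in Lemma~\ref{Adams}, and the uniform convergence of relative capacities to the global capacity proved in Lemma~\ref{loccap}. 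The scaling $\lambdaj = \rj^{n/(n-sp)}$ is chosen exactly so that $\csp$ is scale-invariant at the correct rate: $\csp(\xiij + \lambdaj T) = \lambdaj^{\,n-sp}\csp(T) = \rj^{\,n}\csp(T)$, so that summing over the $\#\Ieps(U)\sim \theta\rj^{-n}$ obstacles produces the finite density $\theta\,\csp(T)$ in the limit.

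\textbf{Liminf inequality.} Let $u_j \to u$ in $L^p(U)$ with $\liminf_j \FFepsj(u_j) < +\infty$; pass to a subsequence realizing the liminf, so $u_j \in \Wsp(U)$, $\tilde{u}_j = 0$ $\csp$-q.e.\ on $\Kj\cap U$, and $\sup_j |u_j|_{\Wsp(U)}^p < +\infty$. By weak compactness $u\in\Wsp(U)$ and $|u|_{\Wsp(U)}^p \le \liminf_j |u_j|_{\Wsp(U)}^p$, which already gives the Gagliardo part. For the zero-order term, the heart of the matter is to localize: around a well-chosen family of indices $\ii\in\Ieps(U)$ one surrounds $\Kij$ with an annulus $B_{\Rj}(\xiij)\setminus B_{\rho_j}(\xiij)$ (with an intermediate scale $\rho_j$ satisfying $\rho_j/\lambdaj\to+\infty$ and $\Rj/\rho_j\to+\infty$) on which, thanks to Lemma~\ref{tecnico2}, $u_j$ may be replaced by a function that is constant $=c_j^\ii$ on the outer boundary, up to an error in the energy that is infinitesimal after summation. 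One then estimates from below $|u_j|_{\Wsp(B_{\Rj}(\xiij))}^p \ge |u_j - c_j^\ii \varphi_j^\ii|^p_{\cdots}$-type contributions by $\csp(\Kij, B_{\Rj}(\xiij); \rho_j) \cdot |c_j^\ii|^p$, uses the relative-capacity convergence \eqref{e:cap3} of Lemma~\ref{loccap} (after rescaling by $\lambdaj$) to replace $\csp(\Kij,\cdot;\cdot)$ by $\rj^{\,n}\csp(T)$, and then recognizes $\sum_\ii \rj^{\,n}|c_j^\ii|^p$ as a Riemann-type sum which, because $c_j^\ii$ approximates the average of $u_j$ (hence of $u$) on $\Vij$ and because $\mu_j \rightharpoonup \beta\Ln\res U$ by \eqref{e:unif-distrib}, converges to $\theta\,\csp(T)\int_U |u|^p\beta\,dx$. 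The disjointness of the annuli (guaranteed by $\Kij\subseteq\Vij$ and Proposition~\ref{p:voroprop}) and the locality-defect identity \eqref{locdef} are what make the sum of local lower bounds a genuine lower bound for the global energy; the cross terms $\defectsp$ are absorbed via the counting estimate \eqref{e:counting2} together with the kernel bounds of Lemma~\ref{Adams}.

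\textbf{Limsup inequality.} By density and lower semicontinuity of the $\Gamma$-upper limit it suffices to construct a recovery sequence for $u\in\Wsp(U)\cap C^\infty$, say, or more safely for $u$ in a dense energy-bounded class. Given such $u$, set $u_j := u - \sum_{\ii\in\Ieps(U)} w_j^\ii$, where each $w_j^\ii$ is built by rescaling a near-optimal test function $\xi_{r}$ for $\CSP(\Kij, B_{\Rj}(\xiij))$ (equivalently for $\rj^n\,\CSP(T,B_{\Rj/\lambdaj})$ after rescaling), multiplied by a value close to $u(\xiij)$ and cut off to be supported in $\overline{B}_{\Rj}(\xiij)\subseteq\Vij$; this makes $\tilde{u}_j = 0$ $\csp$-q.e.\ on $\Kj\cap U$, and $u_j\to u$ in $L^p$ since each $w_j^\ii$ is small in $L^p$ and the supports are disjoint. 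Expanding $|u_j|_{\Wsp(U)}^p$ via \eqref{locdef}: the main term is $|u|_{\Wsp(U)}^p$; the diagonal correction over each $\Vij$ contributes $|u(\xiij)|^p \csp(\Kij,\cdots) + o(\rj^n)$, i.e.\ $|u(\xiij)|^p \rj^n \csp(T)$ up to the uniform estimate \eqref{e:cap2}--\eqref{e:cap3} of Lemma~\ref{loccap}; summing and using \eqref{e:unif-distrib} gives $\theta\,\csp(T)\int_U|u|^p\beta\,dx$; and the remaining cross terms $\defectsp(\cdot,\Vij\times\Vikj)$ must be shown to vanish in the limit. Controlling these long-range corrections is, as the introduction warns, the principal difficulty: one splits them by the distance scale $m\rj < |\xiij-\xikj|_\infty \le (m+1)\rj$, applies Remark~\ref{r:caplodef} (the locality defect of the capacitary potential across $B_{\Rj}\times B_{\Rj}^c$ is $o(\rj^n)$ when $\Rj/\lambdaj\to+\infty$), uses the Hardy/kernel estimates of Lemma~\ref{Adams} and Theorem~\ref{hardy} to bound the interaction of $w_j^\ii$ with the far field, and sums over $m$ using the polynomial count $c\,m^{n-1}$ from \eqref{e:counting2} against the kernel decay $|x-y|^{-(n+sp)}$, which converges because $sp>0$. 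Putting the pieces together yields $\limsup_j \FFepsj(u_j) \le \FF(u)$, and a diagonal argument extends the bound to all $u\in\Wsp(U)$, completing the proof.
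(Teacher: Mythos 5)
Your overall architecture matches the paper's: the paper also splits the $\Gamma$-convergence into a liminf part (Proposition~\ref{lb}) and a limsup part (Proposition~\ref{ub}), both driven by the reduction Lemmas~\ref{tecnico2} and \ref{joining}, the capacity-convergence Lemma~\ref{loccap}, the counting estimates of Proposition~\ref{p:voroprop2}, Hardy, and Lemma~\ref{Adams}. Your liminf sketch is consistent with Proposition~\ref{lb} in spirit (small inaccuracy aside: the annulus $\Cij^{h}$ lives at scale $m^{-3h}\rj$, so the ratio $\Rj/\rho_j$ stays of order $m^{3h}$ rather than diverging; what matters is that both radii diverge after rescaling by $\lambda_j$ while their ratio equals $m$, so the error in \eqref{e:cap2} is of order $(m-1)^{-sp}$ and is killed by sending $m\to\infty$ last).

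The limsup construction, however, has a genuine gap. Setting $u_j:=u-\sum_{\ii}w_j^\ii$ with $w_j^\ii$ equal to a rescaled capacitary test function multiplied by a constant ``close to $u(\xiij)$'' does not produce an admissible competitor: on $\Kij$, where the test function equals $1$ q.e., one gets $\tilde u_j=u-u(\xiij)$, which is merely $O(\lambda_j)$ small (for $u$ Lipschitz), not zero q.e. The constraint is exact, not asymptotic, so this breaks the admissibility of the recovery sequence. The paper avoids this precisely by first invoking Lemma~\ref{joining} for the limsup as well: one replaces $u$ by $w_j$, which is exactly the constant $z_\ii=(u)_{\Cij}$ on the annulus $\Cij$, and then defines $u_j=(1-\xij(\cdot/\lambda_j))\,z_\ii$ inside $\Bij$, which vanishes q.e.\ on $\Kij$ and glues to $w_j$ continuously across the annulus because both equal $z_\ii$ there. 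You invoke Lemma~\ref{joining} for the liminf but drop it for the limsup, and that omission is exactly where the construction fails. Two further points: the containment you assert, $\overline{B}_{\Rj}(\xiij)\subseteq\Vij$, is reversed (Proposition~\ref{p:voroprop}(iii) gives $\Vij\subseteq\overline{B}_{\Rj}(\xiij)$), so the modifications must instead be supported in $B_{N\lambda_j}(\xiij)\subset B_{\rj}(\xiij)\subseteq\Vij$ to avoid overlap between neighboring cells; and you do not address the cells touching $\partial U$ (indices in $\IIeps$), whose obstacles may still intersect $U$ --- the paper kills the constraint there with a separate cut-off $\zeta$ at scale $N\lambda_j$, producing the terms $I^4_j$--$I^6_j$, which are nontrivial to absorb and cannot simply be ignored.
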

\begin{remark}\label{r:triviallimit}
If $\theta\in\{0,+\infty\}$ simple comparison arguments show 
that the conclusions of Theorem~\ref{main} still hold true,
though the $\Gamma$-limit is trivial in both cases.
\end{remark}
Let us show some examples of sets of points included in the 
framework above.   
In the sequel $(\epsj)_{j\in\N}$ will always denote a positive 
infinitesimal sequence.
\begin{example}\label{ex:rescaled}
Given a Delone set of points $\Lambda$ in $\Rn$ let 
$\Lambda_j:=\epsj\Lambda$, then $r_{\Lambda_j}=\epsj r_\Lambda$ 
and $R_{\Lambda_j}=\epsj R_\Lambda$. 
If $\rj\sim r_{\Lambda_j}$ assumptions \eqref{e:lambdaj} and 
\eqref{e:unif-distrib} hold true up to the extraction of 
a subsequence according to Remark~\ref{r:tight}, respectively. 
Several ways of generating Delone sets of points are discussed 
in \cite{Sen}.
\end{example}

More explicit examples can be obtained as follows
(see Examples~\ref{ex:diffeostoch1}, \ref{ex:diffeostoch2} 
for the stochastic versions).  
\begin{example}\label{ex:diffeo1}
Let $\Phi:\Rn\to\Rn$ be a diffeomorphism satisfying
$$
\|\nabla\Phi\|_{L^\infty(\Rn)}\leq M,\quad \text{and }\quad 
\inf_{\R^n}\det\nabla\Phi\geq\nu>0.
$$
Then the smallest eigenvalue of $\nabla^t\Phi\nabla\Phi$ is 
greater than $\nu M^{1-n}$, 
and thus for all $x,y\in\Rn$ 
$$
\nu M^{1-n}|x-y|\leq|\Phi(x)-\Phi(y)|\leq M|x-y|.
$$
Set $\Lambda_j=\{\Phi(\epsj\ii)\}_{\ii\in\Z^n}$, then 
$(\nu M^{1-n}/2)\epsj\leq r_{\Lambda_j}\leq R_{\Lambda_j}\leq M\epsj$.
An easy computation and \eqref{e:unif-distrib2} yield the 
$w^*\hbox{-}C_b(U)$ convergence of the measures $\mu_j$ in 
\eqref{e:unif-distrib} to $\mu=\beta\Ln\res U$ with
$$
\beta(x)=\left(\int_U\det\nabla\Phi^{-1}(x)\,dx\right)^{-1}
\det\nabla\Phi^{-1}(x).
$$
Eventually, if $\rj\in(0, r_{\Lambda_j}]$ with $\rj/\epsj\to\gamma>0$ 
we have $\theta=\gamma^n\int_U\det\nabla\Phi^{-1}(x)\,dx$.
\end{example}
\begin{example}\label{ex:diffeo2}
Let $\Phi$ be a diffeomorphism as in the previous example, 
and set $\Lambda_j=\{\epsj\Phi(\ii)\}_{\ii\in\Z^n}$. 
As before, we have
$(\nu M^{1-d}/2)\epsj\leq r_{\Lambda_j}\leq R_{\Lambda_j}\leq M\epsj$, 
by using \eqref{e:unif-distrib2} it can be checked that if 
$(\det\nabla\Phi^{-1})(\cdot/\epsj)_{j\in\N}$ converge to $g$ weakly $*$ 
in $L^\infty_{\mathrm{loc}}(\Rn)$, the measures $\mu_j$ in 
\eqref{e:unif-distrib} 
converge $w^*\hbox{-}C_b(U)$ to $\mu=\beta\Ln\res U$ with
$$
\beta(x)=\left(\int_Ug(x)\,dx\right)^{-1}g(x).
$$ 
By choosing $\rj\in(0, r_{\Lambda_j}]$ with $\rj/\epsj\to\gamma>0$,
we have $\theta=\gamma^n\int_Ug(x)\,dx$.

\end{example}


\subsection{Technical Lemmas}\label{s:technical}

To prove Theorem~\ref{main} we establish two technical results 
which are instrumental for our strategy. 
Roughly speaking we show that 
the $\Gamma$-limit can be computed on sequences of functions 
matching the values of their limit on suitable annuli surrounding 
the obstacle sets. 
To give a proof as much clear as possible we first work in an 
unscaled setting in Lemma~\ref{tecnico2}, and then turn to the 
framework of interest in Lemma~\ref{joining}. 
The method of proof is elementary and based on a clever slicing 
and averaging argument, looking for those zones where the energy 
does not concentrate.
The relevant property we prove is that the energetic error of 
the construction we perform is estimated by a local term: a measure.

\begin{lemma}\label{tecnico2}
Let $\Lambda$ be a Delone set of points. For any $m\in\N$, $m\geq 2$, 
$\rho\in(0,\rL/2)$ and $\ii\in\IL(U)$ let 
$\Apr_{\ii}=\xii+B_{\rho/m}\setminus\overline{B}_{\rho/m^2}$, 
$\A_{\ii}=\xii+B_\rho\setminus\overline{B}_{\rho/m^3}$,
and $\varphi_{\ii}(\cdot)=\varphi(\cdot-\xii)$, where $\varphi$ is a cut-off
function between $B_{\rho/m}\setminus\overline{B}_{\rho/m^2}$
and $B_\rho\setminus\overline{B}_{\rho/m^3}$.

Then there exists a constant $c=c(n,p,s)>0$ such that for any function
$u\in \Wsp(U)$, and any $\#\IL(U)$-tuple
of vectors $\{z_{\ii}\}_{\ii\in\IL(U)}$, $z_{\ii}\in\Rn$, the 
function 
$$
w(x)=\sum_{\ii\in\IL(U)}\varphi_{\ii}(x)z_{\ii}
+\left(1-\sum_{\ii\in\IL(U)}\varphi_{\ii}(x)\right)u(x)
$$ 
belongs to $\Wsp(U)$, $u=z_{\ii}$ on $\Apr_{\ii}$ and $w=u$ on $\Ac$, with 
 $\A:=\cup_{\ii\in\IL(U)}\A_{\ii}$; in addition
for every measurable set $E\subseteq U\times U$ it holds
\begin{eqnarray}\label{stima00}
\left|\defectsp(w,E)-\defectsp(u,E)\right|\leq
\cnps\left(\defectsp(u,U\times \A)+m^{2p}\rho^{-sp}
\sum_{\ii\in\IL(U)}\int_{\A_{\ii}}\left|u(y)-z_{\ii}\right|^pdy\right).
\end{eqnarray}
\end{lemma}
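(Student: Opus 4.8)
The plan is to reduce everything to the estimate of the locality defect on products of the individual annular regions, exploiting that the $\A_{\ii}$'s are well-separated (since $\rho<\rL/2$ and $\A_{\ii}\subseteq \xii+B_\rho$, the balls $\xii+B_{\rL}$ being disjoint by discreteness). First I would record that $w-u = \sum_{\ii}\varphi_{\ii}(z_{\ii}-u)$ is supported in $\A=\cup_{\ii}\A_{\ii}$, so $w=u$ on $\Ac$ and $w=z_{\ii}$ on $\Apr_{\ii}$ (where $\varphi_{\ii}\equiv1$); that $w\in\Wsp(U)$ will follow once the quantitative bound is established. For the main estimate, fix a measurable $E\subseteq U\times U$ and split $|w(x)-w(y)|^p-|u(x)-u(y)|^p$ using $w=u$ outside $\A$: the difference is nonzero only when $x$ or $y$ lies in $\A$. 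By symmetry it suffices to treat the region $\{x\in\A\}$, i.e. to bound $\defectsp(w,E\cap(\A\times U))-\defectsp(u,E\cap(\A\times U))$. On $\A\times U$ I would use the elementary inequality $\big||a|^p-|b|^p\big|\le \cnps(|a-b|^p + |a-b|\,|b|^{p-1})$ together with Young, so that the difference of integrands is controlled by $\cnps(|v(x)-v(y)|^p + |u(x)-u(y)|^p)$ where $v:=w-u$; the second term integrated over $\A\times U$ gives exactly $\cnps\defectsp(u,\A\times U)$, the first of the two terms on the right-hand side of \eqref{stima00} (up to the symmetrization factor).

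The core of the argument is then bounding $\defectsp(v,\A\times U)$ with $v=\sum_{\ii}\varphi_{\ii}(z_{\ii}-u)$. I would split $\A\times U$ into the "near" part $\cup_{\ii}(\A_{\ii}\times(\xii+B_\rho))$ and the "far" part, the complement. On the near part, since $\varphi_{\ii}$ is a cut-off with $\Lip(\varphi_{\ii})\le m^2/\rho$ (the distance between the two defining annuli is of order $\rho/m^2$), one estimates $|v(x)-v(y)|\le |\varphi_{\ii}(x)-\varphi_{\ii}(y)|\,|u(y)-z_{\ii}| + |\varphi_{\ii}(x)|\,|u(x)-u(y)|$, and integrating the Lipschitz term against $|x-y|^{-n-sp}$ over $\xii+B_\rho$ produces, after the standard kernel computation, a factor $(m^2/\rho)^{sp}\rho^{n}\cdot$(something), which after using $\diam(\A_{\ii})\lesssim\rho$ yields the bound $\cnps m^{2p}\rho^{-sp}\int_{\A_{\ii}}|u(y)-z_{\ii}|^p\dy$ after also invoking a Poincaré–Wirtinger/Hardy-type absorption of the $|u(x)-u(y)|^p$ contribution into $\defectsp(u,\A_{\ii}\times(\xii+B_\rho))\le\defectsp(u,U\times\A)$; summing over $\ii$ gives the two terms on the right of \eqref{stima00}. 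On the far part, for $x\in\A_{\ii}$ and $y$ outside $\xii+B_\rho$ one has $v(x)=\varphi_{\ii}(x)(z_{\ii}-u(x))$ and $v(y)$ is a sum over $\kk\ne\ii$ with $y\in\xkk+B_\rho$; here the separation $|x-y|\gtrsim\rho$ makes the kernel integrable, and one controls $\int_{(\xii+B_\rho)^c}|x-y|^{-n-sp}\dy\lesssim\rho^{-sp}$, so the far contribution of the $|z_{\ii}-u(x)|$ part is again $\lesssim\rho^{-sp}\int_{\A_{\ii}}|u(x)-z_{\ii}|^p\dx$; the $v(y)$ part is handled symmetrically, and the cross terms by Young.

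The main obstacle I expect is the long-range interaction in the far part: because $v$ is globally supported (on all the annuli simultaneously), the product $v(x)v(y)$ couples distinct obstacles $\A_{\ii}$ and $\A_{\kk}$ at all scales, and a naive triangle-inequality bound would produce a divergent sum $\sum_{\kk}|\xii-\xkk|^{-n-sp}\cdot(\text{radius})^{n}$ unless one uses the counting estimate \eqref{e:counting2} from Proposition~\ref{p:voroprop2}: grouping the $\kk$'s into dyadic-in-$\rL$ shells $m\rL<|\xii-\xkk|_\infty\le(m+1)\rL$ contributes $\lesssim m^{n-1}$ terms each of size $\lesssim (m\rL)^{-n-sp}\rho^{n}$, and since $\rho<\rL/2$ the series $\sum_m m^{n-1}(m\rL)^{-n-sp}$ converges, yielding a bound of the right order $\rho^{-sp}$ after summing. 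This is precisely the point where the non-local framework genuinely differs from the local one, and where \eqref{e:counting2} and the careful choice $\rho<\rL/2$ are essential. Once the far part is under control, collecting the near and far estimates and symmetrizing in $x\leftrightarrow y$ gives \eqref{stima00}, and finiteness of the right-hand side (using $u\in\Wsp(U)$, which makes $\defectsp(u,U\times\A)<+\infty$, and choosing $z_{\ii}$ arbitrary so the second term is finite) shows $w\in\Wsp(U)$.
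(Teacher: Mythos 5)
Your overall plan tracks the paper's argument closely, and the key non-local insight — that the long-range coupling between distinct $\A_\ii$ and $\A_\kk$ must be tamed by the counting estimate \eqref{e:counting2}, with $\rho<\rL/2$ making the dyadic shell sum $\sum_h h^{n-1}\rho^n(h\rL)^{-n-sp}$ converge to a $\cnps\rho^{-sp}$ bound — is exactly right and is precisely the content of the paper's estimate \eqref{raggruppamento}. Your near-part estimate (Lipschitz bound on $\varphi_\ii$ plus the kernel computation of Lemma~\ref{Adams}) is also the same idea as the paper's bound for $I_1$ in \eqref{stimaI1}, and the general-$E$ reduction via $v:=w-u$ supported in $\A$ is sound.

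There is however a real gap in your treatment of the "far part". You split $\A\times U$ into $\cup_\ii\big(\A_\ii\times(\xii+B_\rho)\big)$ and its complement, and on the complement you assert "$|x-y|\gtrsim\rho$" so that $\int_{(\xii+B_\rho)^c}|x-y|^{-n-sp}\dy\lesssim\rho^{-sp}$. Neither claim holds uniformly: if $x\in\A_\ii$ has $|x-\xii|=\rho-\varepsilon$ and $y$ lies just outside $\partial B_\rho(\xii)$, then $|x-y|$ can be of order $\varepsilon$, and $\int_{(\xii+B_\rho)^c}|x-y|^{-n-sp}\dy\sim\varepsilon^{-sp}$ blows up. In this outer boundary layer the kernel is genuinely singular, and the naive bound diverges. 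The paper handles exactly this region in the locality-defect term $\defectsp(w,\A\times(U\setminus\A))$ by introducing the \emph{distance-based} split $\Delta^\prime_\ii:=(\A_\ii\times(U\setminus\A))\cap\Delta_\rho$ versus $\Delta^{\prime\prime}_\ii$ outside $\Delta_\rho$; on $\Delta^\prime_\ii$ the Lipschitz decay of $\varphi_\ii$ to zero at $\partial\A_\ii$ is used — since $\varphi_\ii(y)=0$ for $y\in U\setminus\A$, one has $|\varphi_\ii(x)|=|\varphi_\ii(x)-\varphi_\ii(y)|\le 2m^2|x-y|/\rho$ — and this cancels the near-diagonal singularity (see \eqref{0calcolo0}), producing the $m^{2p}\rho^{-sp}\int_{\A_\ii}|u-z_\ii|^p$ bound. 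You invoke this Lipschitz cancellation only in the near part; to close your gap you must also use it across $\partial B_\rho(\xii)$, either by adopting the paper's $\Delta_\rho/\Delta_\rho^c$ dichotomy inside your far part, or by enlarging your "near" region to $\A_\ii\times(\xii+B_{2\rho})$ so that the boundary layer is captured where the Lipschitz bound is already in play. A minor, separate quibble: absorbing the $|u(x)-u(y)|^p$ contribution in the near part into $\defectsp(u,U\times\A)$ requires no "Poincar\'e--Wirtinger/Hardy-type" argument — $|\varphi_\ii(x)|^p\le 1$ and monotonicity of the integral give it directly.
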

\begin{proof}
By construction $w=u$ on $\Ac$ and $u=z_{\ii}$ on $\Apr_{\ii}$ for each $i$.
 
To prove that $w\in\Wsp(U)$ and estimate \eqref{stima00} 
in case $E=U\times U$ we use \eqref{locdef} to get
\begin{equation}\label{stima0}
|w|_{\Wsp(U)}^p=|u|_{\Wsp(\Ac)}^p+|w|_{\Wsp(\A)}^p+2\defectsp(w,\A\times (\Ac)).
\end{equation}
In order to control the last two terms on the rhs above we will use 
two different splitting of the oscillation $w(x)-w(y)$ corresponding
roughly to short range and long range interactions estimates. 

First decompose further the seminorm of $w$ on $\A$ in \eqref{stima0} 
as follows, 
\begin{equation}\label{stima1}
|w|_{\Wsp({\A})}^p=\sum_{\ii\in\IL(U)}\int_{\A_{\ii}\times \A_{\ii}}\ldots dxdy
+\sum_{\{(\ii,\kk):\,\ii\neq\kk\}}\int_{\A_{\ii}\times \A_k}\ldots dxdy=:I_1+I_2.
\end{equation}
Next we deal with $I_1$: since $\varphi_{\lll}\equiv 0$ on 
$\A_{\ii}$ for $\lll\neq\ii$ and $0\leq\varphi_{\ii}\leq 1$ we have
\begin{eqnarray*}
\lefteqn{I_1=\sum_{\ii\in\IL(U)}\int_{\A_{\ii}\times \A_{\ii}}
\frac{\left|(1-\varphi_{\ii}(x))u(x)-(1-\varphi_{\ii}(y))u(y)
+(\varphi_{\ii}(x)-\varphi_{\ii}(y))z_{\ii}\right|^p}
{|x-y|^{n+sp}}dxdy}\\&&
\stackrel{\pm (1-\varphi_{\ii}(x))u(y)}{\leq}2^{p-1}\sum_{\ii\in\IL(U)}
\left(|u|^p_{\Wsp(\A_{\ii})}+\int_{\A_{\ii}\times \A_{\ii}}
\frac{\left|(\varphi_{\ii}(y)-\varphi_{\ii}(x))(u(y)-z_{\ii})\right|^p}
{|x-y|^{n+sp}}dxdy\right).
\end{eqnarray*}
Take note that $\Lip(\varphi_{\ii})\leq 2m^2/\rho$ for all $\ii\in\IL(U)$, 
then Fubini theorem and (i) in Lemma~\ref{Adams} applied with 
$\nu=n+(s-1)p$ and $O=\A_{\ii}$ imply
\begin{eqnarray}\label{stimaI1}
\lefteqn{I_1\leq 2^{p-1}\sum_{\ii\in\IL(U)}\left(|u|^p_{\Wsp(\A_{\ii})}+
\left(\frac{2m^2}{\rho}\right)^p\int_{\A_{\ii}\times \A_{\ii}}
\frac{|u(y)-z_{\ii}|^p}{|x-y|^{n+(s-1)p}}dxdy\right)}\notag\\&&
\leq \cnps\sum_{\ii\in\IL(U)}\left(|u|^p_{\Wsp(\A_{\ii})}+
m^{2p}\rho^{-sp}\int_{\A_{\ii}}|u(y)-z_{\ii}|^pdy\right).
\end{eqnarray}
To estimate $I_2$ we rewrite $w$ as follows
\begin{eqnarray}\label{triangolare2}
w(x)-w(y)=u(x)-u(y)+\sum_{\lll\in\IL(U)}\varphi_\lll(x)(z_\lll-u(x))+
\sum_{\lll\in\IL(U)}\varphi_\lll(y)(u(y)-z_\lll).
\end{eqnarray}
With the help of \eqref{triangolare2} and since $\varphi_\lll\equiv 0$ 
on $\A\setminus\A_\lll$ we infer
$$
I_2=\sum_{\{(\ii,\kk):\,\ii\neq\kk\}}\int_{\A_{\ii}\times \A_\kk}
\frac{|u(x)-u(y)+\varphi_{\ii}(x)(z_{\ii}-u(x))+
\varphi_\kk(y)(u(y)-z_\kk)|^p}{|x-y|^{n+sp}}dxdy.
$$ 
Thus, we can bound each summand in $I_2$ as follows 
\begin{eqnarray}\label{stima2}
\lefteqn{\int_{\A_{\ii}\times\A_\kk}
\frac{\left|w(x)-w(y)\right|^p}{|x-y|^{n+sp}}dxdy\leq 3^{p-1}
\defectsp(u,\A_{\ii}\times\A_\kk)}\\&&
+ 3^{p-1}\int_{\A_{\ii}}dx\int_{\A_\kk}
\frac{\left|\varphi_{\ii}(x)(u(x)-z_{\ii})\right|^p}{|x-y|^{n+sp}}dy
+ 3^{p-1}\int_{\A_\kk}dy\int_{\A_{\ii}}
\frac{\left|\varphi_\kk(y)(u(y)-z_\kk)\right|^p}
{|x-y|^{n+sp}}dx.\notag
\end{eqnarray}
Further, take note that for every fixed $(\ii,\kk)$, with $\ii\neq\kk$, 
if $x\in\A_\ii$ and $y\in\A_\kk$ we have 
$|x-y|\geq \dist(\A_\ii,\A_\kk)\geq|x^\ii-x^\kk|-\rL$, and then
$|x-y|\geq|x^\ii-x^\kk|/2$ since $|x^\ii-x^\kk|\geq 2\rL$. Hence, 
being $\Ln(A_\kk)\leq \omega_n\rho^n$ for all $\kk\in\Z^n$, 
from \eqref{e:counting2} and the choice $\rho\in(0,\rL/2)$ it follows
for some constant $c=c(n)>0$
\begin{eqnarray}\label{raggruppamento}
\lefteqn{\sum_{\{\kk:\,\kk\neq\ii\}}\int_{\A_\kk}\frac{1}{|x-y|^{n+sp}}dy\leq 
\cnps\sum_{\{\kk:\,\kk\neq\ii\}}\frac{\rho^n}{|x^\ii-x^\kk|^{n+sp}}}\\&&
\leq \cnps\sum_{h\geq 1}\sum_{\{\kk\neq\ii,\,\,h\rL<|x^\ii-x^\kk|_\infty\leq(h+1)\rL\}}
\frac{\rho^n}{(h\rL)^{n+sp}}\leq\frac{\cnps}{\rho^{sp}}
\sum_{h\geq 1}\frac 1{h^{1+sp}}.\notag
\end{eqnarray}
By summing up on $(\ii,\kk)$, $\ii\neq\kk$, Fubini theorem, 
\eqref{stima2} and \eqref{raggruppamento} imply for some 
$\cnps=c(n,p,s)>0$
\begin{eqnarray}\label{stimaI2}
I_2\leq 3^{p-1}\defectsp(u,\cup_{\ii}(\A_{\ii}\times\cup_{\kk\neq\ii}\A_{\kk}))
+\frac{\cnps}{\rho^{sp}}\sum_{\ii\in\IL(U)}\int_{\A_{\ii}}|u(x)-z_{\ii}|^pdx,
\end{eqnarray}
We turn now to the locality defect term in \eqref{stima0}.
We use again equality \eqref{triangolare2} and notice that 
$\varphi_{\ii}\equiv 0$ on $U\setminus\overline{\A}_{\ii}$ 
to infer
\begin{eqnarray}\label{stima3}
\lefteqn{\hskip-1.5cm\defectsp(w,\A\times {(\Ac)})
\leq 2^{p-1}\defectsp(u,\A\times {(\Ac)})
+2^{p-1}\int_{\A\times (\Ac)}\frac{\left|\sum_{\IL(U)}
\varphi_{\ii}(x)(u(x)-z_{\ii})\right|^p}{|x-y|^{n+sp}}dxdy}\notag\\&&
=2^{p-1}\defectsp(u,\A\times {(\Ac)})+
2^{p-1}\sum_{\ii\in\IL(U)}\int_{\A_{\ii}\times(\Ac)}
\frac{\left|\varphi_{\ii}(x)(u(x)-z_{\ii})\right|^p}{|x-y|^{n+sp}}dxdy.
\end{eqnarray}
We fix $\ii\in\Z^n$ and define $\Delta^\prime_{\ii}=
(\A_{\ii}\times (\Ac))\cap\Delta_\rho$, then by using 
$|\varphi_{\ii}(x)-\varphi_{\ii}(y)|\leq 2m^2|x-y|/\rho$, with 
$\varphi_{\ii}(y)=0$ for $y\in\Ac$,
Fubini theorem and a direct integration yield
\begin{eqnarray}\label{0calcolo0}
\lefteqn{\int_{\Delta^\prime_{\ii}}
\frac{|\varphi_{\ii}(x)(u(x)-z_{\ii})|^p}{|x-y|^{n+sp}}dxdy
\leq (2m^2)^p\rho^{-p}\int_{\Delta^\prime_{\ii}}
\frac{|u(x)-z_{\ii}|^p}{|x-y|^{n+(s-1)p}}dxdy}\\&&
\leq (2m^2)^p\rho^{-p}\int_{\A_{\ii}}dx\int_{B_\rho(x)}
\frac{|u(x)-z_{\ii}|^p}{|x-y|^{n+(s-1)p}}dy
=\frac{n\omega_n(2m^2)^p}{p(1-s)}\rho^{-sp}
\int_{\A_{\ii}}|u(x)-z_{\ii}|^pdx.\notag
\end{eqnarray}
Let now $\Delta^{\prime\prime}_{\ii}=(\A_{\ii}\times (\Ac))
\setminus\Delta_\rho$, then we argue as above using 
$|\varphi_{\ii}(x)-\varphi_{\ii}(y)|\leq 1$ to get again 
by a direct integration
\begin{eqnarray}\label{0calcolo1}
\lefteqn{\int_{\Delta^{\prime\prime}_{\ii}}
\frac{|\varphi_{\ii}(x)(u(x)-z_{\ii})|^p}{|x-y|^{n+sp}}dxdy
\leq \int_{\Delta^{\prime\prime}_{\ii}}
\frac{|u(x)-z_{\ii}|^p}{|x-y|^{n+sp}}dxdy}\notag\\&&
\leq\int_{\A_{\ii}}dx \int_{\Rn\setminus B_\rho(x)}
\frac{|u(x)-z_{\ii}|^p}{|x-y|^{n+sp}}dy
=\frac{n\omega_n}{sp}\rho^{-sp}\int_{\A_{\ii}}|u(x)-z_{\ii}|^pdx.
\end{eqnarray}
By taking into account \eqref{stima3}-\eqref{0calcolo1} we deduce 
\begin{eqnarray}\label{stima4}
\defectsp(w,\A\times {(\Ac)})\leq 2^{p-1}\defectsp(u,\A\times {(\Ac)})
+\cnps m^{2p}\rho^{-sp}\sum_{\ii\in\IL(U)}\int_{\A_{\ii}}|u(x)-z_{\ii}|^pdx.
\end{eqnarray}
By collecting \eqref{stima0}, \eqref{stimaI1}, \eqref{stimaI2}, 
and \eqref{stima4} we infer $w\in\Wsp(U)$ 
and estimate \eqref{stima00} for $U\times U$. 

For any measurable subset $E$ in $U\times U$, being $w=u$ 
on $\Ac$, we have 
\begin{multline*}
|\defectsp(w,E)-\defectsp(u,E)|=
|\defectsp(w,E\cap(U\times A))-\defectsp(u,E\cap(U\times A))|\\
\leq |w|^p_{\Wsp(A)}+\defectsp(w,(U\setminus A)\times A)+
\defectsp(u,U\times A).
\end{multline*}
Eventually, \eqref{stima00} follows at once by taking into account 
\eqref{stima1}-\eqref{stima4}.
\end{proof}

By scaling Lemma~\ref{tecnico2} we establish
a joining lemma for fractional type energies. 

Before starting the proof we fix some notation: 
having fixed $m\in\N$ and set $\Ieps:={\mathcal I}_{\Lambda_j}(U)$, 
for all $\ii\in \Ieps$ and $h\in\N$ let
\begin{eqnarray*}
\Bijh=\{x\in\Rn :\, |x-\xiij|<m^{-3h}\rj\},\quad  
\Cijh:=\{x\in \Rn:\, m^{-3h-2}\rj<|x-\xiij|< m^{-3h-1}\rj\}.
\end{eqnarray*} 
Clearly, we have $\Cijh\subset\Bijh\setminus\overline{B}_j^{i,h+1} \subset\Vij$.
\begin{lemma}\label{joining}
Let $(u_j)_{j\in\N}$ be converging to $u$ in $L^p(U)$ with
$\sup_j|u_j|_{\Wsp(U)}<+\infty$. 
With fixed $m$, $N\in\N$, for every $j\in\N$ there exists 
$h_j\in\{1,\ldots,N\}$ and a function $w_j\in\Wsp(U)$ such that 
\begin{equation}
  \label{e:equal1}
  w_j\equiv u_j \text{ on }  U\setminus\cup_{\ii\in \Ieps}
(\overline{B}_j^{i,h_j}\setminus B_j^{i,h_j+1}),
\end{equation}
\begin{equation}
  \label{e:equal2}
  w_j(x)\equiv (u_j)_{\Cij}\; \text{ on } \Cij,
\end{equation}
for some $\cnpsm=c(n,p,s,m)>0$ it holds for every measurable set $E$ 
in $U\times U$
\begin{equation}
  \label{e:errorest}
\left|\defectsp(u_j,E)-\defectsp(w_j,E)\right|
\leq\frac{\cnps}N|u_j|_{\Wsp(U)}^p,
\end{equation}
and the sequences $(w_j)_{j\in\N}$, $(\zeta_j)_{j\in\N}$, with 
$\zeta_j:=\sum_{\ii\in \Ieps(U)}(u_j)_{\Cij}\chi_{\Vij}$,
converge to $u$ in $L^p(U)$.

In addition, if $u_j\in L^\infty(U)$
\begin{equation}
  \label{e:equal3}
  \|w_j\|_{L^\infty(U)}\leq\|u_j\|_{L^\infty(U)}. 
\end{equation}
\end{lemma}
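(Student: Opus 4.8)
The strategy is to apply Lemma~\ref{tecnico2} on each Vorono\"i cell, suitably rescaled, with a De Giorgi averaging argument in the index $h\in\{1,\ldots,N\}$ to select a good radius $h_j$ for which the error is controlled by $\tfrac1N|u_j|_{\Wsp(U)}^p$. First I would fix $m$ and $N$, and for each $j$ and each $h\in\{1,\ldots,N\}$ consider the function obtained from $u_j$ by replacing it with the average $(u_j)_{\Cijhj}$ on a suitable annulus at scale $m^{-3h}\rj$ around each $\xiij$, interpolating via cut-offs on the slightly larger and smaller annuli, exactly as in Lemma~\ref{tecnico2} but now centered at $\xiij$ with $\rho$ replaced by $m^{-3h+1}\rj$ (the constraint $\rho<\rLj/2$ is met for $j$ large since $m^{-3h+1}\rj\le m^{-2}r_{\Lambda_j}$ by $\rj\le r_{\Lambda_j}$ and \eqref{e:Rj/rj}). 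The choice $z_\ii=(u_j)_{\Cijhj}$ in Lemma~\ref{tecnico2} gives \eqref{e:equal1} and \eqref{e:equal2}, while \eqref{e:equal3} is immediate since on the modification region $w_j$ is a convex combination of $u_j$ and an average of $u_j$, hence bounded by $\|u_j\|_{L^\infty(U)}$.

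The heart of the matter is the error estimate \eqref{e:errorest}. Writing $A^{(h)}_j:=\cup_{\ii\in\Ieps}(\Bijh\setminus\overline{B}^{\ii,h+1}_j)$ for the region modified at level $h$, Lemma~\ref{tecnico2} (rescaled, using Remark~\ref{r:PWscaled} for the scale-invariant Poincar\'e--Wirtinger constants) yields
\begin{equation*}
\left|\defectsp(w_j,E)-\defectsp(u_j,E)\right|\leq
\cnps\Big(\defectsp(u_j,U\times A^{(h)}_j)+
m^{2p}(m^{-3h}\rj)^{-sp}\sum_{\ii\in\Ieps}\int_{\Bijh\setminus\overline{B}^{\ii,h+1}_j}|u_j(y)-(u_j)_{\Cijhj}|^p\dy\Big).
\end{equation*}
For the second term, the scaled Poincar\'e--Wirtinger inequality \eqref{e:PWscaled} on the annulus $\Bijh\setminus\overline{B}^{\ii,h+1}_j$ with the mean taken over $\Cijhj$ bounds $(m^{-3h}\rj)^{-sp}\int|u_j-(u_j)_{\Cijhj}|^p$ by $c\,|u_j|^p_{\Wsp(\Bijh)}$; summing over $\ii$ and using the disjointness of the balls $\Bijh\subseteq\Vij$ together with \eqref{locdef}, this is $\leq c\,\defectsp(u_j,U\times A^{(h-1)}_j)$ up to enlarging the region by one level. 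Thus the full error at level $h$ is dominated, up to a constant $c(n,p,s,m)$, by $\defectsp(u_j,U\times \widehat A^{(h)}_j)$ where $\widehat A^{(h)}_j$ is a fixed bounded enlargement of $A^{(h)}_j$. The regions $\widehat A^{(1)}_j,\ldots,\widehat A^{(N)}_j$ are pairwise disjoint for $N$ levels (they sit in disjoint dyadic-type annuli $m^{-3h+1}\rj > m^{-3(h+1)-1}\rj$ around each $\xiij$), hence
\begin{equation*}
\sum_{h=1}^N \defectsp(u_j,U\times \widehat A^{(h)}_j)\leq 2\,|u_j|^p_{\Wsp(U)},
\end{equation*}
so by the pigeonhole principle there exists $h_j\in\{1,\ldots,N\}$ with $\defectsp(u_j,U\times\widehat A^{(h_j)}_j)\leq \tfrac{2}{N}|u_j|^p_{\Wsp(U)}$, which gives \eqref{e:errorest}.

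Finally, $L^p$-convergence of $(w_j)$ and $(\zeta_j)$ to $u$: since $w_j=u_j$ outside $A^{(h_j)}_j$ and $\Ln(A^{(h_j)}_j)\leq \omega_n\sum_{\ii\in\Ieps}(m^{-3h_j}\rj)^n\leq c\,\#\Ieps(U)\,\rj^n\to c\,\theta$ \emph{uniformly bounded}, while on $A^{(h_j)}_j$ we have $|w_j|\le|u_j|$ plus an average, one controls $\|w_j-u_j\|_{L^p(A^{(h_j)}_j)}$ using the scaled Poincar\'e--Wirtinger inequality again by $c\,(m^{-3h_j}\rj)^{s}|u_j|_{\Wsp(U)}\to 0$, since $\rj\to0$ and $\sup_j|u_j|_{\Wsp(U)}<+\infty$; an identical estimate handles $\zeta_j$, comparing $(u_j)_{\Cij}$ with $u_j$ on each $\Vij$ via \eqref{e:PWscaled} at scale $\Rj\sim\rj$. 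I expect the main obstacle to be the bookkeeping in the long-range interaction estimate: making the disjointness of the enlarged regions $\widehat A^{(h)}_j$ precise and verifying that the counting estimate \eqref{e:counting2} controls $\sum_{\kk\ne\ii}(m^{-3h}\rj)^n/|\xiij-\xkkj|^{n+sp}$ uniformly in $h$, exactly as in \eqref{raggruppamento}, so that the constant in \eqref{e:errorest} genuinely depends only on $n,p,s,m$ and not on $N$ or $j$.
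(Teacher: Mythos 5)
Your overall strategy coincides with the paper's: apply Lemma~\ref{tecnico2} at each scale $h\in\{1,\ldots,N\}$ with $\rho=m^{-3h}\rj$ and $z_\ii=(u_j)_{\Cijh}$, use the scaled Poincar\'e--Wirtinger inequality to convert the averaging error into a locality-defect term, and then average over $h$ (De Giorgi's slicing) to find a good $h_j$ with error $O(1/N)$. The treatment of \eqref{e:equal1}--\eqref{e:equal3} and the $L^p$-convergence are correct in spirit (the paper also invokes \eqref{e:PWscaled} at scale $m^{-3h_j}\rj$, and for $\zeta_j$ additionally uses finite overlapping of the balls $B_{M\rj}(\xiij)$ and equi-integrability of $|u_j|^p$ to handle $U\setminus\cup_{\Ieps}\Vij$, a detail you gloss over).

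There is, however, one step where your bookkeeping as literally written does not close, and it is precisely at the averaging. You state that \eqref{e:PWscaled} bounds $(m^{-3h}\rj)^{-sp}\int_{\A_\ii^h}|u_j-(u_j)_{\Cijh}|^p$ by $c\,|u_j|^p_{\Wsp(\Bijh)}$, i.e.\ by the seminorm over the \emph{full ball} $\Bijh$. The balls $\Bijh$ are nested in $h$, so $\sum_{h=1}^N|u_j|^p_{\Wsp(\Bijh)}$ can be as large as $N\,|u_j|^p_{\Wsp(B_j^{\ii,1})}$; the pigeonhole then produces no gain. Enlarging ``by one level'' does not rescue this: $\Bijh\times\Bijh$ contains $B_j^{\ii,h'}\times B_j^{\ii,h'}$ for \emph{all} $h'\geq h$, not just for $h'=h\pm1$, so the enlarged regions cannot be made pairwise disjoint. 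The fix (which is what the paper actually does) is to apply Poincar\'e--Wirtinger on the \emph{annulus} $\A^h_\ii=\Bijh\setminus\overline{B}_j^{\ii,h+1}$ itself (connected for $n\geq2$, and it is exactly the domain of integration produced by Lemma~\ref{tecnico2}), with the mean taken over $\Cijh\subset\A^h_\ii$. This yields $c(n,p,s,m)|u_j|^p_{\Wsp(\A^h_\ii)}\leq c\,\defectsp(u_j,U\times\A^h_\ii)$ directly, and since the annuli $\A^h_\ii$ are genuinely pairwise disjoint in $(h,\ii)$, one gets $\sum_h\defectsp(u_j,U\times\A^h)\leq|u_j|^p_{\Wsp(U)}$ with no enlargement at all, and the averaging cleanly delivers \eqref{e:errorest}.

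Two minor remarks: the scale in Lemma~\ref{tecnico2} is $\rho=m^{-3h}\rj$ (not $m^{-3h+1}\rj$ as you write), and the constraint $\rho<\rLj/2$ holds for \emph{every} $j$ since $\rho\leq m^{-3}\rj\leq\rj/8<\rLj/2$; no passage to large $j$ is needed. Also, your observation that $\Ln(A^{(h_j)}_j)$ stays bounded is correct but not used; the $L^p$-convergence follows directly from the Poincar\'e--Wirtinger bound $\|w_j-u_j\|_{L^p(U)}^p\leq c\,\rj^{sp}|u_j|_{\Wsp(U)}^p\to0$.
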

\begin{proof}
Given $m$, $N\in\N$, then for every $j\in\N$ and $h\in\{1,\ldots,N\}$ 
fixed, apply Lemma~\ref{tecnico2} with $(\Apr)_{\ii}^h:=\Cijh$, 
$\A_{\ii}^h:=\Bijh\setminus\overline{B}_j^{i,h+1}$,
$z_{\ii}=(u_j)_{\Cijh}$, $\ii\in \Ieps$. Take note that $\rho=m^{-3h}\rj$. 
If $w^{i,h}_j$ denotes the resulting function and 
$\A^h=\cup_{\ii\in\Ieps}\A_{\ii}^h$,
then for some constant $\cnps=c(n,p,s)$ and for any measurable set 
$E$ in $U\times U$ by \eqref{stima00} it holds
\begin{eqnarray*}
\left|\defectsp(u_j,E)-\defectsp(w_j,E)\right|\leq 
\cnps\defectsp(u_j,U\times\A^h)
+\cnps m^{2p}\left(\frac{m^{3h}}{\rj}\right)^{ps}\sum_{\ii\in\Ieps}
\int_{\A_{\ii}^h}|u_j-(u_j)_{\Cijh}|^pdx.
\end{eqnarray*}
This estimate, together with the scaled Poincar\`e-Wirtinger 
inequality \eqref{e:PWscaled} with $r=m^{-3h}\rj$, gives 
\begin{equation}\label{e:errorest1}
\left|\defectsp(u_j,E)-\defectsp(w_j,E)\right|
\leq\cnps\left(\defectsp(u_j,U\times\A^h)
+|u_j|_{\Wsp(\A^h)}^p\right)\leq \cnps
\defectsp(u_j,U\times\A^h),
\end{equation}
fro some $\cnpsm=c(n,p,s,m)>0$.
By summing up and averaging on $h$, being the $\A^h$'s
disjoint, we find $h_j\in\{1,\ldots,N\}$ such that 
\begin{equation}\label{e:errorest2}
\defectsp(u_j,U\times\A^{h_j})\leq\frac{1}N\defectsp(u_j,U\times\cup_{h}\A^h).
\end{equation}
Set $w_j:=w^{i,h_j}_j$, then \eqref{e:equal1} and \eqref{e:equal2} 
are satisfied by construction, and moreover 
\eqref{e:errorest1} and \eqref{e:errorest2} imply \eqref{e:errorest}.

To prove that $(w_j)_{j\in\N}$ converges to $u$ in $L^p(U)$ we use  
\eqref{e:PWscaled}, with $r=m^{-3h}\rj$, and the very definition of 
$w_j$ as convex combination of $u_j$ and the mean value 
$(u_j)_{C_j^{i,h_j}}$ on $\Bijhj\setminus\overline{B}_j^{i,h_j+1}$ to get
\begin{eqnarray*}
\lefteqn{\|u_j-w_j\|_{L^p(U)}^p=\|u_j-w_j\|_{L^p(\A^{h_j})}^p
=\sum_{\ii\in\Ieps}\|u_j-w_j\|_{L^p(\Bijhj\setminus B_j^{i,h_j+1})}^p}\\&&
\leq\sum_{\ii\in \Ieps}\|u_j-(u_j)_{C_j^{i,h_j}}\|
_{L^p(\Bijhj\setminus B_j^{i,h_j+1})}^p\leq c\left(\frac{\rj}{m^{3h_j}}\right)^{ps}
\sum_{\ii\in \Ieps}|u_j|_{\Wsp(\Bijhj\setminus B_j^{i,h_j+1})}^p
\leq c\rj^{ps}|u_j|_{\Wsp(U)}^p,
\end{eqnarray*}
where $\cnpsm=c(n,p,s,m)>0$.

Eventually, let us show the convergence of $(\zeta_j)_{j\in\N}$ 
to $u$ in $L^p(U)$. To this aim we prove that $(\zeta_j-u_j)_{j\in\N}$ 
is infinitesimal in $L^p(U)$.
Fix any number $M$ with $\sup_j\Rj/\rj<M<+\infty$ (see \eqref{e:Rj/rj}), 
we claim that for some constant $c=c(n,p,s,m,M,N)>0$ we have
\begin{equation}\label{e:nuova}
\sum_{\ii\in \Ieps}\|u_j-(u_j)_{\Cij}\|_{L^p(\Vij)}^p
\leq c\,\rj^{sp}|u_j|_{\Wsp(U)}^p.
\end{equation}
Given this for granted the conclusion is a straightforward consequence
of the definition of $\zeta_j$, of \eqref{e:nuova}, of \eqref{e:counting}$_3$ 
and of the equi-integrability of $(|u_j|^p)_{j\in\N}$,
i.e.
\begin{eqnarray*} 
\|\zeta_j-u_j\|_{L^p(U)}^p=\sum_{\ii\in \Ieps}\|u_j-(u_j)_{\Cij}\|_{L^p(\Vij)}^p
+\|u_j\|_{L^p(U\setminus\cup_{\Ieps}\Vij)}^p.
\end{eqnarray*}
To prove \eqref{e:nuova} we use \eqref{e:PWscaled} and the fact that 
the balls $B_{M\rj}(\xiij)$ have (uniformly) finite overlapping. 
More precisely, the inclusions $\Vij\subseteq\overline{B}_{\Rj}(\xiij)
\subseteq B_{M\rj}(\xiij)$ and \eqref{e:PWscaled} applied with $r=\rj$
give for some  $c=c(n,p,s,m,M,N)>0$
$$
\|u_j-(u_j)_{\Cij}\|_{L^p(\Vij)}^p\leq\|u_j-(u_j)_{\Cij}\|_{L^p(B_{M\rj}(\xiij))}^p
\leq c\,\rj^{ps}|u_j|_{\Wsp(B_{M\rj}(\xiij))}^p.
$$
Moreover, since $\Lambda_j$ is a Delone set and by definition 
$\rj\leq r_{\Lambda _j}$, an elementary counting argument implies
$\sup_{\ii\in\Z^n}\#\{\kk\in\Z^n:\,B_{M\rj}(\xiij)\cap B_{M\rj}(\xkkj)
\neq\emptyset\}\leq(2M+1)^n$ for all $j\in\N$.

Finally, \eqref{e:equal3} follows by construction.
\end{proof}

\subsection{Proof of the $\Gamma$-convergence}\label{s:proof}

We establish the $\Gamma$-convergence result in Theorem~\ref{main}. 
It will be a consequence of Propositions~\ref{lb}, \ref{ub}, below. 
We start with the lower bound inequality.
\begin{proposition}\label{lb}
For every $u_j\to u$ in $L^p( U)$ we have 
$$
\liminf_j\Fj(u_j)\geq\FF(u).
$$
\end{proposition}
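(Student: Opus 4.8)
The plan is to prove the liminf inequality by a standard lower-semicontinuity-plus-blow-up scheme, but with the obstacle-induced capacitary term extracted through the technical joining machinery of Lemma~\ref{joining}. We may assume $\liminf_j\Fj(u_j)<+\infty$ and, passing to a subsequence realizing the liminf, that $\sup_j|u_j|_{\Wsp(U)}^p<+\infty$; in particular $u\in\Wsp(U)$ since the Gagliardo seminorm is lower semicontinuous under $L^p$ convergence. Thus the first term $|u|^p_{\Wsp(U)}$ on the right-hand side of \eqref{e:Glimit} is immediately controlled, and the whole issue is to show that the ``extra mass'' in $\liminf_j|u_j|^p_{\Wsp(U)}-|u|^p_{\Wsp(U)}$ is at least $\theta\,\csp(T)\int_U|u|^p\beta\,dx$.

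The key step is to apply Lemma~\ref{joining}: fixing $m,N\in\N$, for each $j$ we obtain $h_j\in\{1,\dots,N\}$ and $w_j\in\Wsp(U)$ which coincides with $u_j$ outside the thin shells $\overline B_j^{\ii,h_j}\setminus B_j^{\ii,h_j+1}$, is constant $=(u_j)_{\Cij}$ on the annulus $\Cij$, and satisfies $\bigl|\defectsp(u_j,E)-\defectsp(w_j,E)\bigr|\le\frac{c}{N}|u_j|^p_{\Wsp(U)}$ for all measurable $E\subseteq U\times U$; moreover $w_j\to u$ and $\zeta_j:=\sum_{\ii}(u_j)_{\Cij}\chi_{\Vij}\to u$ in $L^p(U)$. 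Since $\tilde u_j=0$ q.e. on $\Kj\cap U$ and $\Kij\subseteq B_j^{\ii,h_j+1}$ (the obstacle sits at scale $\lambda_j=\rj^{n/(n-sp)}\ll m^{-3h}\rj$ for $j$ large), $w_j$ still vanishes q.e. on $\Kj\cap U$ and in particular the constant $(u_j)_{\Cij}$ is ``felt'' against the obstacle. The strategy is then to split, using \eqref{locdef}, the energy $|w_j|^p_{\Wsp(U)}$ into a part living in a neighbourhood $\bigcup_{\ii}\Bijhj$ of the obstacles and the complementary part, to bound below the localized part inside each ball $\Bijhj$ by the relative capacity $\csp(T)$-type quantity — after the rescaling $x\mapsto(x-\xiij)/\rj$ the function $w_j$ is admissible (up to normalization) for a capacitary problem in an annulus of inner radius comparable to $\lambda_j/\rj\to0$ and outer radius $m^{-3h_j}$, whose energy by Lemma~\ref{loccap} (estimate \eqref{e:cap3}, or the scaled \eqref{e:cap2}) is at least $(\csp(T)-o(1))|(u_j)_{\Cij}|^p$ times $\rj^n$ — and to recognize the resulting sum $\sum_{\ii\in\Ieps}\rj^n\,\csp(T)\,|(u_j)_{\Cij}|^p$ as $\csp(T)\int_U|\zeta_j|^p\,d(\#\Ieps(U)\rj^n\,\mu_j)$. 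Passing to the limit using $\#\Ieps(U)\rj^n\to\theta$, $\mu_j\rightharpoonup\beta\Ln\res U$, $\zeta_j\to u$ in $L^p$, together with the liminf-superadditivity of the complementary term against $|u|^p_{\Wsp(U)}$ and lower semicontinuity, then letting $N\to+\infty$ and $m\to+\infty$ to kill the $c/N$ error, yields the claim.

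I expect the main obstacle to be the decoupling and bookkeeping of the long-range interaction terms when splitting $|w_j|^p_{\Wsp(U)}$: one must show that, after removing the $\bigcup_{\ii}\Bijhj$-localized contributions, what remains is bounded below by $|u|^p_{\Wsp(U)}-o(1)$ in the limit, and this requires controlling the cross terms $\defectsp(w_j,\Bijhj\times(\Bijhj)^c)$ and the interactions between distinct balls. Here the counting estimate \eqref{e:counting2} of Proposition~\ref{p:voroprop2}, the singular-kernel bounds of Lemma~\ref{Adams}, and Hardy's inequality (Theorem~\ref{hardy}, in its scaled form \eqref{e:Hardy2}) will be needed exactly as in the error analysis of Lemma~\ref{tecnico2}, to ensure these spurious terms are negligible; a secondary technical point is to justify that $(|\zeta_j|^p)_j$ is equi-integrable so that $\int_U|\zeta_j|^p\,d\mu_j\to\int_U|u|^p\beta\,dx$, which follows from \eqref{e:nuova} and the uniform $\Wsp$ bound via the fractional Sobolev embedding. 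Finally, care is needed with the case $\beta\in L^\infty(U)$ only (as established in Remark~\ref{r:tight}), but since $\beta$ is bounded this causes no real trouble in the passage to the limit.
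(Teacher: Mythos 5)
Your overall architecture matches the paper's proof: invoke Lemma~\ref{joining} to replace $u_j$ by $w_j$ which is constant $(u_j)_{\Cij}$ on the annuli $\Cij$, extract the capacitary contribution from each Vorono\"\i\ cell by rescaling and using Lemma~\ref{loccap}, encode the per-cell contributions via the piecewise-constant functions $\zeta_j$ and a density $\Psi_j$ that converges weak$^\ast$ in $L^\infty$ to $\theta\beta$, and finally pass to the limit in $N$ and in the cut-off parameter.

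There is, however, a genuine gap in your decomposition step. You propose to split $|w_j|^p_{\Wsp(U)}$ via \eqref{locdef} into the part on $\bigcup_\ii\Bijhj\times\Bijhj$ and the complement, and then to argue that the complementary term is bounded below by $|u|^p_{\Wsp(U)}-o(1)$. This is exactly the claim you cannot prove as stated: the excised set $\bigcup_\ii\Bijhj$ has $\L^n$-measure of order $m^{-3h_j}\theta$, which for fixed $m,N$ is bounded away from zero uniformly in $j$, and the sets $\bigcup_\ii\Bijhj$ do not nest or converge to a fixed set, so no Fatou-type argument applied directly to $(U\setminus\bigcup_\ii\Bijhj)^2$ produces $|u|^p_{\Wsp(U)}$ in the limit (only $\zeta_j\to u$ in $L^p$ is available, and the $\Wsp$-seminorm is merely lower semicontinuous, not continuous, under that convergence). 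Worrying about the cross terms $\defectsp(w_j,\Bijhj\times(\Bijhj)^c)$ is also misdirected here: for a lower bound one simply drops them, since they are nonnegative. The missing idea is the one the paper actually uses: with $m=\lfloor1/\delta\rfloor$, split $U\times U$ along the diagonal neighbourhood $\Delta_\delta$ rather than along the obstacle balls. For $j$ large, $\bigcup_\ii(\Vij\times\Vij)\subseteq\Delta_\delta$, so the obstacle contribution is captured inside $\Delta_\delta$, while on $(U\times U)\setminus\Delta_\delta$ the kernel $|x-y|^{-(n+sp)}$ is \emph{bounded} by $\delta^{-(n+sp)}$, and Fatou (along an a.e. convergent subsequence of $w_j$) gives $\liminf_j\int_{(U\times U)\setminus\Delta_\delta}\geq\int_{(U\times U)\setminus\Delta_\delta}|u(x)-u(y)|^p/|x-y|^{n+sp}\,dxdy$, which then increases to $|u|^p_{\Wsp(U)}$ as $\delta\to0^+$. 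Without this diagonal split, the recovery of the full Gagliardo seminorm remains unproved in your scheme; with it, the rest of your argument (including the correct reading of the capacity rescaling and the $\Psi_j\to\theta\beta$ weak$^\ast$-$L^\infty$ step against $|\zeta_j|^p\to|u|^p$ strongly in $L^1$) goes through as in the paper.
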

\begin{proof}
Fix $N\in\N$, $\delta>0$, and set $m=\lfloor 1/\delta\rfloor\in\N$,
$\lfloor \cdot\rfloor$ denoting the integer part function.
Consider the sequence $(w_j)_{j\in\N}$ provided by Lemma~\ref{joining}. 
We do not highlight its dependence on $\delta$, $N$ for the sake of 
notational convenience. We remark that whatever 
the choice of $\delta$ and $N$ is, we have that $(w_j)_{j\in\N}$ 
converges to $u$ in $L^p(U)$ and that for some $c=c(n,p,s,\delta)$ it holds 
\begin{equation}\label{e:reduction}
\left(1+\frac{\cnpsm}N\right)\liminf_j\Fj(u_j)\geq\liminf_j\Fj(w_j).
\end{equation}
Furthermore, we note that for $j$ sufficiently big
$\cup_{\ii\in\Ieps}(\Vij\times\Vij)\subseteq\Delta_\delta$, and thus
\begin{eqnarray}\label{e:parziale}
\lefteqn{\hskip-1cm\liminf_j\Fj(w_j)
\geq\liminf_j\left(\int_{ U\times U\setminus\Delta_\delta}
\frac{|w_j(x)-w_j(y)|^p}{|x-y|^{n+sp}}dx\,dy+
\sum_{\ii\in \Ieps}|w_j|^p_{\Wsp(\Vij)}\right)}\notag\\
&&\hskip-1cm\geq\int_{U\times U\setminus\Delta_\delta}
\frac{|u(x)-u(y)|^p}{|x-y|^{n+sp}}dx\,dy+
\liminf_j\sum_{\ii\in \Ieps}|w_j|^p_{\Wsp(\Vij)},
\end{eqnarray}
thanks to Fatou lemma. We claim that
\begin{equation}
  \label{e:capacity}
\liminf_j\sum_{\ii\in \Ieps}|w_j|^p_{\Wsp(\Vij)}\geq
\theta\,(\csp(T)-\epsilon_\delta)\int_ U|u(x)|^p\beta(x)dx,
\end{equation}
with $\epsilon_\delta>0$ infinitesimal as $\delta\to 0^+$.
Given this for granted, by \eqref{e:reduction} inequality 
\eqref{e:parziale} rewrites as
\begin{equation}\label{e:finale}
\left(1+\frac{\cnpsm}N\right)\liminf_j\Fj(u_j)\geq
\int_{ U\times U\setminus\Delta_\delta}
\frac{|u(x)-u(y)|^p}{|x-y|^{n+sp}}dx\,dy+
\theta\,(\csp(T)-\epsilon_\delta)\int_ U|u(x)|^p\beta(x)dx.
\end{equation}
The thesis then follows by passing to the limit first as $N\to+\infty$ 
and then as $\delta\to 0^+$ in \eqref{e:finale}. 
  
To conclude we are left with proving \eqref{e:capacity}. We keep the
notation of Lemma~\ref{joining}, and further set 
$\Bij:=\{x\in\Rn:\,|x-\xiij|<m^{-(3h_j+1)}\rj\}$,
for all $\ii\in\Ieps$. Take note that $\Bij\subseteq\Vij$. We have
\begin{eqnarray}\label{e:stima1}
\lefteqn{|w_j|_{\Wsp(\Vij)}^p\geq
\inf\left\{|w|_{\Wsp(\Bij)}^p:\,w\in\Wsp(\Rn),\,w=(u_j)_{\Cij}\text{ on }\Cij,\,
\tilde{w}=0\text{ q.e. on } \Kij\right\}}\\&&
=\inf\left\{|w|_{\Wsp(\Bij)}^p:\,w\in\Wsp(\Rn),\,w=0\text{ on }\Cij,\,
\tilde{w}=(u_j)_{\Cij}\text{ q.e. on } \Kij\right\}\notag\\&&
=|(u_j)_{\Cij}|^p\csp\left(\Kij,\Bij;\frac{\rj}{m^{3h_j+2}}\right)=
\lambdaj^{n-ps}|(u_j)_{\Cij}|^p
\csp\left(T,B_{\frac{\rj}{m^{3h_j+1}\lambdaj}};\frac{\rj}{m^{3h_j+2}\lambdaj}\right).
\notag
\end{eqnarray}
The last equality is justified by an elementary translation and 
scaling argument. Thanks to \eqref{e:cap2} in Lemma~\ref{loccap},
by recalling that $h_j\in\{1,\ldots,N\}$, we get the following estimate
$$
\csp\left(T,B_{\frac{\rj}{m^{3h_j+1}\lambdaj}};\frac{\rj}{m^{3h_j+2}\lambdaj}\right)
\geq\csp(T)-\frac{c}{(m-1)^{sp}}
\CSP\left(B_1,B_{\frac{\rj}{m^{3h_j+2}\lambdaj}}\right).
$$
Hence, if $A\in\AA(U)$ is such that  $A\subset\subset U$, for 
$j$ sufficiently big we infer 
\begin{eqnarray*}
\sum_{\ii\in \Ieps}|w_j|^p_{\Wsp(\Vij)}\geq
\left(\csp(T)-\frac{c}{(m-1)^{sp}}
\CSP\left(B_1,B_{\frac{\rj}{m^{3h_j+2}\lambdaj}}\right)\right)
\int_A|\zeta_j(x)|^p\Psi_j(x)dx,
\end{eqnarray*}
where $\Psi_j(x):=\sum_{\ii\in\Ieps}\lambdaj^{n-sp}
(\Ln(\Vij))^{-1}\chi_{\Vij}(x)$ and $\zeta_j$ is defined in 
Lemma~\ref{joining}. Take note that by \eqref{e:counting}$_2$ we have  
$$
\left|\int_{A^\prime}\Psi_j(x)dx-\lambdaj^{n-sp}\#(\Ieps(A^\prime))\right|
\leq\lambdaj^{n-sp}\#(\IIeps(A^\prime))\leq
\omega_n^{-1}\L^n((\partial A^\prime)_{\Rj})
$$
for any $A^\prime\in\AA(U)$.  
Testing the inequality above for all cubes in $U$ with sides 
parallel to the coordinate axes, centers and vertices with 
rational coordinates yields $\Psi_j\to \theta\,\beta$ weak$^*$ 
$L^\infty(U)$ by \eqref{e:lambdaj} and \eqref{e:unif-distrib}.

By taking this into account, the thesis follows at once by 
the convergence of relative capacities to the global one proved 
in \eqref{e:cap1} of Lemma~\ref{loccap}, the strong convergence 
of $(\zeta_j)_{j\in\N}$ to $u$  in $L^p(U)$ established in 
Lemma~\ref{joining}, and eventually by letting $A$ increase to $U$.
\end{proof}

In the next proposition we prove that the lower bound established 
in Proposition~\ref{lb} is attained. In doing that the main 
difficulty is to show that the inequalities in \eqref{e:parziale} 
are optimal. 
Due to the insight provided by Proposition~\ref{lb} we show that the 
capacitary contribution is concentrated along the diagonal set 
$\Delta$ and is due to short range interactions.
Instead, long range interactions are responsible for the non-local 
term in the limit.
\begin{proposition}\label{ub}
For every $u\in L^p(U)$ there exists a sequence $(u_j)_{j\in\N}$ 
such that $u_j\to u$ in $L^p(U)$ and  
$$
\limsup_j\Fj(u_j)\leq\FF(u).
$$
\end{proposition}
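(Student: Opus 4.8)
The plan is the classical recovery-sequence construction for obstacle homogenization, adapted to the non-local setting: inside each cell $\Vij$ we subtract from $u$ a rescaled copy of the capacitary potential of $T$, tuned so that it vanishes on $\Kij$ while carrying, asymptotically, exactly the energy $\csp(T)\lambdaj^{n-sp}=\csp(T)\rj^n$. A preliminary reduction streamlines everything: if $u\notin\Wsp(U)$ there is nothing to prove; and since $\FF$ is continuous along $\Wsp(U)$-convergent sequences (the Gagliardo seminorm trivially, the zeroth order term because $\beta\in L^\infty(U)$, see Remark~\ref{r:tight}) whereas $\Gamma\text{-}\limsup_j\FFepsj$ is lower semicontinuous on $L^p(U)$, it suffices to build the sequence for $u$ in the dense class of Lipschitz functions. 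The advantage of Lipschitz $u$ is the decay $\sum_{\ii\in\Ieps(U)}|u|_{\Wsp(\Vij)}^p\le c\,\Lip(u)^p\,\Ln(U)\,\Rj^{(1-s)p}\to 0$: the whole $\Wsp$-energy of $u$ is then carried by inter-cell interactions.

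Fix $\eta>0$. By \eqref{e:cap1} choose $r_\eta$ with $\CSP(T,B_{r_\eta})\le\csp(T)+\eta$, let $\xi$ be the minimizer of $\CSP(T,B_{r_\eta})$, which by truncating as in Lemma~\ref{l:cap} may be taken with values in $[0,1]$, $\xi\equiv 0$ off $\overline B_{r_\eta}$, $\tilde\xi\equiv 1$ q.e. on $T$, and for every $\ii$ with $\Kij\cap U\neq\emptyset$ set $\xi_j^{\ii}(x):=\xi\bigl((x-\xiij)/\lambdaj\bigr)$. Since $\lambdaj/\rj\to 0$, for $j$ large the support $\overline B_{\lambdaj r_\eta}(\xiij)$ of $\xi_j^{\ii}$ lies in $B_{r_{\Lambda_j}}(\xiij)\subseteq\Vij$, the supports are pairwise disjoint, $|\xi_j^{\ii}|_{\Wsp(\Rn)}^p=\lambdaj^{n-sp}\CSP(T,B_{r_\eta})=\rj^n\,\CSP(T,B_{r_\eta})$ by scaling, and — the feature responsible for the capacitary term concentrating on the diagonal — the radius $\lambdaj r_\eta$ tends to $0$. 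Set $u_j:=u\bigl(1-\sum_\ii\xi_j^{\ii}\bigr)$: it is a finite sum, so $u_j\in\Wsp(U)$; $\tilde u_j=0$ q.e. on $\Kj\cap U$ because $\tilde\xi_j^{\ii}=1$ q.e. on $\Kij\subseteq\Vij$ and the supports are disjoint, hence $\FFepsj(u_j)=|u_j|_{\Wsp(U)}^p$; and $u_j\to u$ in $L^p(U)$ since $\{u_j\neq u\}$ has measure $\le c\,r_\eta^n(\lambdaj/\rj)^n\to 0$ and $u_j,u$ are bounded by $\|u\|_{L^\infty(U)}$.

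The heart of the proof is the estimate of $|u_j|_{\Wsp(U)}^p$. I would write $U=\bigl(\cup_{\ii\in\Ieps(U)}\Vij\bigr)\cup W_j$ with $W_j:=U\setminus\cup_{\ii\in\Ieps(U)}\Vij$, $\Ln(W_j)\le(\partial U)_{\Rj}\to0$ by \eqref{e:counting}, and split $|u_j|_{\Wsp(U)}^p$ into a within-cell part $\sum_\ii|u_j|_{\Wsp(\Vij)}^p$ and the remaining interactions. On a cell, using $0\le\xi_j^{\ii}\le 1$ and the identity $u_j(x)-u_j(y)=(1-\xi_j^{\ii}(x))(u(x)-u(y))-u(y)(\xi_j^{\ii}(x)-\xi_j^{\ii}(y))$, one separates a term comparable to $|u(x)-u(y)|^p$, a term $|u(y)|^p|\xi_j^{\ii}(x)-\xi_j^{\ii}(y)|^p$, and mixed terms; summing over $\ii$ and applying H\"older twice (in $(x,y)$ and in $\ii$), the $u$-self term and the mixed terms vanish thanks to $\sum_\ii|u|_{\Wsp(\Vij)}^p\to0$ and the boundedness of $\sum_\ii|\xi_j^{\ii}|_{\Wsp(\Vij)}^p$, while in the main term $|u(y)|^p$ can be frozen at $|u(\xiij)|^p$, the error being absorbed by $\Lip(u)$, by $\diam(\Vij)\le2\Rj$ and, where $y$ leaves $\mathrm{spt}\,\xi_j^{\ii}$, by the scaled fractional Hardy inequality \eqref{e:Hardy2} applied to $\xi_j^{\ii}\in\tWsp(B_{\lambdaj r_\eta}(\xiij))$. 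This reduces the within-cell part to $\CSP(T,B_{r_\eta})\,\rj^n\sum_{\ii\in\Ieps(U)}|u(\xiij)|^p+o(1)$, which by \eqref{e:lambdaj} and \eqref{e:unif-distrib} (the latter with $|u|^p\in C_b(U)$) converges to $\theta\,\CSP(T,B_{r_\eta})\int_U|u|^p\beta\,dx\le\theta(\csp(T)+\eta)\int_U|u|^p\beta\,dx$. For the remaining interactions, since $u_j$ and $u$ differ only on $\cup_\ii\mathrm{spt}\,\xi_j^{\ii}$, one uses $|a-b|^p\le(1+\epsilon)|a|^p+C_\epsilon|b|^p$ and controls the $C_\epsilon$-term by $c\,\|u\|_{L^\infty(U)}^p\sum_\ii\int_{\mathrm{spt}\,\xi_j^{\ii}}\xi_j^{\ii}(x)^p\,\dist(x,\partial\Vij)^{-sp}\,dx\le c\,\|u\|_{L^\infty(U)}^p\,\rj^{(sp)^2/(n-sp)}\to0$, using $\dist(x,\partial\Vij)\ge\tfrac12 r_{\Lambda_j}$ there and the arithmetic $\lambdaj^{n-sp}=\rj^n$; hence $\limsup_j$ of the remaining interactions is $\le(1+\epsilon)\limsup_j\bigl(|u|_{\Wsp(U)}^p-\sum_\ii|u|_{\Wsp(\Vij)}^p-|u|_{\Wsp(W_j)}^p\bigr)\le(1+\epsilon)|u|_{\Wsp(U)}^p$ for every $\epsilon>0$, thus $\le|u|_{\Wsp(U)}^p$. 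The finitely many indices with $\Kij\cap U\neq\emptyset$ but $\Vij\not\subseteq U$ lie in $\IIeps(U)$, and since $\#\IIeps(U)\,\lambdaj^{n-sp}=O((\partial U)_{\Rj})$ their whole contribution is $o(1)$ by the same estimates. Collecting everything and letting first $j\to\infty$ and then $\eta\to0$ yields $\limsup_j\FFepsj(u_j)\le|u|_{\Wsp(U)}^p+\theta\,\csp(T)\int_U|u|^p\beta\,dx=\FF(u)$, via the diagonal argument permitted by the lower semicontinuity of $\Gamma\text{-}\limsup_j\FFepsj$.

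I expect the main obstacle to be the one flagged in the introduction: forcing every cross interaction to vanish. Inside a cell one must decouple the oscillation of $u$ from that of the corrector $\xi_j^{\ii}$, and this is precisely what the reduction to Lipschitz $u$ buys, since it forces $\sum_\ii|u|_{\Wsp(\Vij)}^p\to0$. The long-range self-interaction of each tiny corrector with the rest of $U$ must be shown to be of lower order; this is where the scaled fractional Hardy inequality \eqref{e:Hardy2} and the counting estimates of Proposition~\ref{p:voroprop2} come into play, together with the identity $\lambdaj^{n-sp}=\rj^n$ and $\lambdaj/\rj\to0$, which render every residual power of $\rj$ strictly positive.
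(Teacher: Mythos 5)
Your proposal is correct, and it takes a route that is genuinely different from the paper's. The paper builds the recovery sequence in two stages: it first applies the joining Lemma~\ref{joining} (with $m=2$) to replace $u$ by $w_j$, a sequence that is exactly constant, equal to $u_{\Cij}$, on thin annuli around each $\xiij$; it then grafts the rescaled capacitary profile onto those constant values, and adds extra cut-offs $\zeta(\cdot/\lambda_j)$ on the boundary cells. The energy is then estimated through a six-term splitting $I^1_j,\dots,I^6_j$. You instead skip the joining step entirely and set $u_j=u\bigl(1-\sum_\ii\xi^\ii_j\bigr)$, which is the construction one would first write down. The reason this works without any preprocessing is exactly the observation you make explicit: after the standard reduction to $u$ Lipschitz, the within-cell oscillation energy $\sum_\ii|u|^p_{\Wsp(\Vij)}\le c\,\Lip(u)^p\Ln(U)\Rj^{(1-s)p}$ vanishes, so inside a cell the corrector carries essentially all of the energy and $|u(y)|^p$ can be frozen at $|u(\xiij)|^p$ up to an error $O(\Rj)\to 0$. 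This makes the construction self-contained and arguably shorter. The trade-offs are: the paper uses the joining lemma anyway for the lower bound, so reusing it here keeps the two halves of the $\Gamma$-convergence proof parallel; and the paper's device of replacing $u$ by the averaged constant $u_{\Cij}$ inside the ball (rather than multiplying $u$ itself) carries over verbatim to the random-obstacle settings of Section~\ref{s:random}, where the local corrector $\xi^{\ii,N}_j$ differs from cell to cell and one wants a construction whose cell contribution is \emph{exactly} $|u_{\Cij}|^p\,\csp(\dots)$ without any interference from the oscillation of $u$ (the random Theorem~\ref{main4} does not reduce to Lipschitz $u$ in a way that pairs cleanly with a multiplicative construction because there the obstacles, not $u$, change from cell to cell). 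A minor point: your appeal to the scaled Hardy inequality \eqref{e:Hardy2} when ``$y$ leaves $\spt\xi^\ii_j$'' is not actually needed for that step, since the crude bound $|\xi^\ii_j|^p_{\Wsp(\Vij)}\le|\xi^\ii_j|^p_{\Wsp(\Rn)}=\rj^n\CSP(T,B_{r_\eta})$ already suffices for the upper-bound direction; the Hardy-type estimate is genuinely needed only for the long-range $C_\epsilon$-term, which you also handle, and which produces the correct positive power $\rj^{(sp)^2/(n-sp)}\to 0$.
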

\begin{proof}
We may assume $u\in W^{1,\infty}(U)$ by a standard density argument 
and the lower semicontinuity of $\Gamma\hbox{-}\limsup\FFepsj$.

With fixed $N\in\N$, let $\xij\in\Wsp(\Rn)$ be such that $\xij=0$ on 
$\Rn\setminus\overline{B}_N$, $\tilde{\xi}_{N}\geq 1$ $\csp$ q.e. on 
$T$ and $|\xij|^p_{\Wsp(\Rn)}\leq\CSP(T,B_N)+1/N$, and let
$\zeta\in C^{\infty}_0(B_{N})$ be any function such that 
$\zeta\equiv 1$ on $B_{\rL}$, $(\Lip\zeta)^p\leq 2$ and 
$0\leq\zeta\leq 1$.

Let $(w_j)_{j\in\N}$ be the sequence obtained from $u$ by applying 
Lemma~\ref{joining} with $m=2$. We keep the notation 
introduced there and further set 
\begin{eqnarray*}
&&\Bij:=\{x\in\Rn:\,|x-\xiij|<2^{-3h_j}\rj\},\;\;
\uij:=u_{\Cij}\quad \text{for every}\; \ii\in\Ieps,\\
&&\tBij:=B_{N\lambdaj}(\xiij)\cap U\quad \text{for every}\; \ii\in\IIeps,\; 
\IIeps:={\mathscr I}_{\Lambda_j}(U),\\
&& U_j:=U\setminus\left(\left(\cup_{\Ieps}\Bij\right)\cup
\left(\cup_{\IIeps}\tBij\right)\right).
\end{eqnarray*}
Then, recalling that $\lambdaj=\rj^{n/(n-sp)}$, define
\begin{eqnarray}
  \label{e:recovery}
  u_j(x):=\begin{cases}

w_j(x) & U_j\\

\left(1-\xij\left(\frac{x-\xiij}{\lambdaj}\right)\right)\uij
& \Bij,\,\ii\in\Ieps\\

\left(1-\zeta\left(\frac{x-\xiij}{\lambdaj}\right)\right)w_j(x)
& \tBij,\,\ii\in\IIeps.
\end{cases}
\end{eqnarray}
For the sake of notational simplicity we have not highlighted the
dependence of the sequence $(u_j)_{j\in\N}$ on the parameter $N\in\N$.
Clearly, $(u_j)_{j\in\N}$ converges strongly to $u$ in $L^p(U)$, and
moreover it satisfies the obstacle condition by construction.
The rest of the proof is devoted to show that $u_j\in\Wsp(U)$ with 
$$
\limsup_j\FFepsj(u_j)\leq\FF(u)+\epsilon_\delta+\epsilon_N,
$$
where $\epsilon_\delta\to 0^+$ as $\delta\to 0^+$ and $\epsilon_N\to 0^+$ as
$N\to+\infty$. 

A first reduction can be done by computing the energy of $u_j$ only 
on a neighborhood of the diagonal $\Delta$. Indeed, Lebesgue 
dominated convergence and the stated convergence of $(u_j)_{j\in\N}$ 
to $u$ in $L^p(U)$ imply
$$
\lim_j\defectsp(u_j,(U\times U)\setminus\Delta_\delta)=
\defectsp(u,(U\times U)\setminus\Delta_\delta).
$$
In addition, since $u_j\equiv w_j$ on $U_j$ by \eqref{e:errorest}
in Lemma~\ref{joining} we have for some constant $\cnps=c(n,p,s)$
\begin{equation}\label{e:tesi0}
\limsup_j\defectsp(u_j,(U_j\times U_j)\cap\Delta_\delta)\leq
\limsup_j\defectsp(w_j,(U\times U)\cap\Delta_\delta)
\leq\left(1+\frac{\cnps}N\right)
\defectsp(u,(U\times U)\cap\Delta_\delta)=\epsilon_\delta.
\end{equation}
The conclusion then follows provided we show that
\begin{eqnarray}
  \label{e:tesi1}
\lefteqn{\hskip-1cm\limsup_j\left(
\defectsp(u_j,(U\times(U\setminus\overline{U}_j))\cap\Delta_\delta)+
\defectsp(u_j,((U\setminus\overline{U}_j)\times U_j)\cap\Delta_\delta)\right)}
\notag\\ &&
\leq\theta\,\csp(T)\int_U|u(x)|^p\beta(x)dx+\epsilon_N+\epsilon_\delta.
\end{eqnarray}
In order to prove this we introduce the following 
splitting of the left hand side above:
\begin{eqnarray}
  \label{e:decompose}
  \lefteqn{\defectsp(u_j,(U\times(U\setminus\overline{U}_j))
\cap\Delta_\delta)\leq\sum_{\ii\in\Ieps}|u_j|^p_{\Wsp(\Bij)}
+\sum_{\{(\ii,\kk)\in\Ieps^2:\,0<|\xiij-\xkkj|<\delta\}}
\defectsp(u_j,\Bij\times \Bkj)}\notag\\&&
+2\sum_{\ii\in\Ieps}\defectsp(u_j,(\Bij\times U_j)\cap\Delta_\delta)
+\sum_{(\ii,\kk)\in\IIeps^2}\defectsp(u_j,\tBij\times 
\tBkj)\notag\\&&
+2\sum_{\ii\in\IIeps}\defectsp(u_j,(\tBij\times
U_j)\cap\Delta_\delta)
+2\sum_{(\ii,\kk)\in\Ieps\times\IIeps}\defectsp\left(u_j,(\Bij\times
\tBkj)\cap\Delta_\delta\right)=:I^1_j+\ldots+I^6_j\notag.
\end{eqnarray}
Next we estimate separately each term $I^h_j$, $h\in\{1,\ldots,6\}$.
Since the computations below are quite involved, 
we divide our argument into several steps to provide a proof 
as clear as possible. Take note that all the constants $c$ appearing 
in the rest of the proof depend only on $n$, $p$, $s$, hence this
dependence will no longer be indicated.

\medskip
{\it Step 1. Estimate of  $I^1_j$:
\begin{equation}
  \label{e:dec1}
  \limsup_jI^1_j\leq\theta\,(\csp(T)+\epsilon_N)\int_U|u(x)|^p\beta(x)dx.
\end{equation}
}
A change of variables yields
\begin{eqnarray*}
   \lefteqn{I^1_j=
\lambdaj^{n-sp}
\sum_{\ii\in\Ieps}|\uij|^p|\xij|^p_{\Wsp(\lambdaj^{-1}(\Bij-\xiij))}}
\\&&\leq\left(\CSP(T,B_N)+\frac 1N\right)\sum_{\ii\in\Ieps}\rj^n|\uij|^p=
\left(\CSP(T,B_N)+\frac 1N\right)\int_U|\zeta_j(x)|^p\Psi_j(x)dx,
\end{eqnarray*}
where $\Psi_j(x)=\sum_{\ii\in\Ieps}\lambdaj^{n-sp}
(\Ln(\Vij))^{-1}\chi_{\Vij}(x)$ and $\zeta_j$ is defined in 
Lemma~\ref{joining}.
Arguing as in Proposition~\ref{lb} and by Lemma~\ref{loccap} 
we conclude \eqref{e:dec1}.

\medskip
{\it Step 2. Estimate of  $I^2_j$:
\begin{equation}
  \label{e:dec2}
\limsup_jI^{2}_j\leq \epsilon_\delta.
\end{equation}
} 
Take note that by the very definition of $u_j$ in 
\eqref{e:recovery} for any $(x,y)\in \Bij\times \Bkj$, 
$\ii\neq\kk$ and $\ii,\kk\in\Ieps$, we get 
$$
u_j(x)-u_j(y)=\left(\uij-\ukj\right)
-\xij\left(\lambdaj^{-1}(x-\xiij)\right)\uij
+\xij\left(\lambdaj^{-1}(y-\xkkj)\right)\ukj.
$$
Hence, we can bound $I^2_j$ as follows
\begin{eqnarray*}
&& I^2_j\leq 
3^{p-1}\sum_{\ii\in\Ieps}\sum_{\{\kk\in\Ieps:\,0<|\xiij-\xkkj|<\delta\}}
\int_{\Bij\times \Bkj}\frac{|\uij-\ukj|^p}{|x-y|^{n+sp}}dxdy\\&&+
3^{p}\|u\|^p_{L^\infty(U)}\sum_{\ii\in\Ieps}
\sum_{\{\kk\in\Ieps:\,0<|\xiij-\xkkj|<\delta\}}
\int_{\Bij\times \Bkj}\frac{|\xij(\lambdaj^{-1}(x-\xiij))|^p}{|x-y|^{n+sp}}
dxdy=:I^{2,1}_j+ I^{2,2}_j.
\end{eqnarray*}
Since $|\xiij-\xkkj|/2\leq |x-y|\leq 2|\xiij-\xkkj|$ for any 
$(x,y)\in \Bij\times \Bkj$, $\ii,\kk\in\Ieps$ with $\ii\neq\kk$,  
we infer $|\uij-\ukj|\leq 2\Lip(u)|\xiij-\xkkj|$ being $u\in\Lip(U)$,
and so we deduce 
\begin{equation}\label{e:dec21b}
\int_{\Bij\times \Bkj}\frac{|\uij-\ukj|^p}{|x-y|^{n+sp}}dxdy
\leq \cnps\Lip^p(u)\frac{\rj^{2n}}{|\xiij-\xkkj|^{n+(s-1)p}}.
\end{equation}
To go on further we notice that for every fixed $\ii\in\Ieps$ we have
$$
\{\kk\in\Ieps:\,0<|\xiij-\xkkj|_\infty<\delta\}\subseteq
\cup_{h=2}^{\lfloor\delta/\rj\rfloor}
\{\kk\in\Ieps:\,h \rj\leq|\xiij-\xkkj|_\infty<(h+1)\rj\},
$$
where $\lfloor t\rfloor$ denotes the integer part of $t$.
The latter inclusion together with \eqref{e:counting}$_1$,
\eqref{e:counting2} and \eqref{e:dec21b} entail
\begin{eqnarray}\label{e:dec21} 
\lefteqn{I^{2,1}_j\leq \cnps\Lip^p(u)\sum_{\ii\in\Ieps}
\sum_{h=2}^{\lfloor\delta/\rj\rfloor}
\sum_{\{\kk\in\Ieps:\,h \rj\leq|\xiij-\xkkj|_\infty<(h+1)\rj\}} 
\frac{\rj^{n-(s-1)p}}{h^{n+(s-1)p}}
\notag}\\&&
\stackrel{\eqref{e:counting}_1,\eqref{e:counting2}}{\leq} \cnps\Lip^p(u)
\sum_{h=2}^{\lfloor\delta/\rj\rfloor}\frac{\rj^{-(s-1)p}}{h^{1+(s-1)p}}
\leq \cnps\Lip^p(u)\delta^{(1-s)p}.
\end{eqnarray}
In the last inequality we used that 
$\sum_{h=2}^Mh^{-(1+\gamma)}\leq (M^{-\gamma})/(-\gamma)$,
for any $\gamma<0$ and $M\in\N$.

To deal with $I^{2,2}_j$ we use a similar argument. 
Indeed, for every $\ii\in\Ieps$ we have
$$
\sum_{\{\kk\in\Ieps:\,\kk\neq\ii\}}\int_{\Bkj}\frac{1}{|x-y|^{n+sp}}dy\leq 
\cnps\sum_{\{\kk\in\Ieps:\,\kk\neq\ii\}}\frac{\rj^n}{|\xiij-\xkkj|^{n+sp}}
\stackrel{\eqref{e:counting2}}{\leq} 
\frac{\cnps}{\rj^{sp}}\sum_{h\geq 1}\frac 1{h^{1+sp}}.
$$
Thus, being $\xij(\lambdaj^{-1}(\cdot-\xiij))$  supported in 
$\Bij$, a change of variables yields
\begin{equation}\label{e:dec22}
I^{2,2}_j\leq \cnps\|u\|_{L^\infty(U)}^p2^{Nn}
\lambdaj^{n}\rj^{-n-sp}\|\xij\|_{L^p(B_N)}^p=
\cnps\|u\|_{L^\infty(U)}^p2^{Nn}\rj^{\frac{(sp)^2}{n-sp}}\|\xij\|_{L^p(B_N)}^p.
\end{equation}
Clearly, \eqref{e:dec21} and \eqref{e:dec22} imply \eqref{e:dec2}.

\medskip
{\it Step 3. Estimate of  $I^3_j$:
\begin{equation}
  \label{e:dec3}
  \limsup_jI^3_j\leq 
\epsilon_\delta+\epsilon_N.
\end{equation}
} 
Being $u_j\equiv w_j$ on $U_j$ and  
$\spt(\xij(\lambdaj^{-1}(\cdot-\xiij)))\subseteq\Bij$, we find
\begin{eqnarray*}
\lefteqn{I^3_j\leq \cnps \|u\|^p_{L^\infty(U)}\sum_{\ii\in\Ieps}
\defectsp(\xij(\lambdaj^{-1}(\cdot-\xiij)),\Bij\times (U\setminus\Bij))
}\\&&
+\cnps\sum_{\ii\in\Ieps}\int_{\Bij\times U_j}
\frac{|w_j(x)-\uij|^p}{|x-y|^{n+sp}}dxdy
+\cnps\sum_{\ii\in\Ieps}\int_{(\Bij\times U_j)\cap\Delta_\delta}
\frac{|w_j(y)-w_j(x)|^p}{|x-y|^{n+sp}}dxdy\\&&=:I^{3,1}_j+I^{3,2}_j+I^{3,3}_j.
\end{eqnarray*}
Note that by a change of variables the integral $I^{3,1}_j$ rewrites as
 \begin{equation}
   \label{e:dec31}
I^{3,1}_j\leq \cnps \|u\|^p_{L^\infty(U)}\lambdaj^{n-sp}\rj^{-n}
\defectsp\left(\xij,B_{\frac{\rj}{8^{h_j}\lambdaj}}\times\left(\Rn\setminus
\overline{B}_{\frac{\rj}{8^{h_j}\lambdaj}}\right)\right)=\epsilon_N,
 \end{equation}
by Remark~\ref{r:caplodef}. 
To deal with the term $I^{3,2}_j$ we first integrate out $y$ 
and then use Hardy inequality:
\begin{equation}
  \label{e:dec32}
I^{3,2}_j\leq \cnps\sum_{\ii\in\Ieps}
\int_{\Bij}\frac{|w_j(x)-\uij|^p}{\dist^{sp}(x,\partial\Bij)}dx
\leq \cnps\sum_{\ii\in\Ieps}|w_j|^p_{\Wsp(\Bij)}
\leq \cnps\defectsp(w_j,(U\times U)\cap\Delta_\delta)
\stackrel{\eqref{e:tesi0}}{=}\epsilon_\delta.
\end{equation}
Finally, for what $I^{3,3}_j$ is concerned we have
\begin{equation}\label{e:dec33}
I^{3,3}_j\leq\cnps\defectsp(w_j,(U\times U)\cap\Delta_\delta)
\stackrel{\eqref{e:tesi0}}{=}\epsilon_\delta.
\end{equation}
By collecting \eqref{e:dec31}-\eqref{e:dec33} 
we infer \eqref{e:dec3}.

\medskip
{\it Step 4. Estimate of  $I^4_j$:
\begin{equation}
  \label{e:dec4}
  \limsup_jI^4_j\leq \epsilon_\delta.
\end{equation}
} 
The very definition of $u_j$ in \eqref{e:recovery} gives
for any $(x,y)\in \tBij\times \tBkj$, $\ii,\kk\in\IIeps$
\begin{eqnarray*}
\lefteqn{u_j(x)-u_j(y)=
\left(1-\zeta\left(\lambdaj^{-1}(x-\xiij)\right)\right)
w_j(x)-\left(1-\zeta\left(\lambdaj^{-1}(y-\xkkj)\right)\right)w_j(y)
}\\&&=
\left(1-\zeta\left(\lambdaj^{-1}(x-\xiij)\right)\right)(w_j(x)-w_j(y))+
\left(\zeta\left(\lambdaj^{-1}(x-\xiij)\right)-
\zeta\left(\lambdaj^{-1}(y-\xkkj)\right)\right)w_j(y).
\end{eqnarray*}
Distinguishing the couples of the form $(\ii,\ii)$, $\ii\in\IIeps$,
from the others we bound $I^4_j$ as follows
\begin{eqnarray*}
\lefteqn{I^4_j\leq \cnps\defectsp(w_j,(U\times U)\cap\Delta_\delta)
+\cnps\|u\|^p_{L^\infty(U)}\sum_{\ii\in\IIeps}
\left|\zeta\left(\lambdaj^{-1}(\cdot-\xiij)\right)\right|
^p_{\Wsp(\tBij)}}\\&&
+\cnps\|u\|^p_{L^\infty(U)}\sum_{\{(\ii,\kk)\in\IIeps^2:\,\ii\neq\kk\}}
\int_{\tBij\times \tBkj}
\frac{|\zeta(\lambdaj^{-1}(x-\xiij))-\zeta(\lambdaj^{-1}(y-\xkkj))|^p}
{|x-y|^{n+sp}}dxdy:=I^{4,1}_j+I^{4,2}_j+I^{4,3}_j.
\end{eqnarray*}
A change of variables and \eqref{e:counting}$_2$ yield
\begin{equation}
  \label{e:dec42}
   I^{4,1}_j+I^{4,2}_j\leq \cnps\defectsp(w_j,(U\times U)\cap\Delta_\delta)+
\cnps
\L^n\left((\partial U)_{\Rj}\right)|\zeta|^p_{\Wsp(B_N)}
\stackrel{\eqref{e:tesi0}}{\leq}\epsilon_\delta+\cnps\,
\L^n\left((\partial U)_{\Rj}\right).
\end{equation}
To deal with the term $I^{4,3}_j$ first note that
$$
I^{4,3}_j\leq \cnps\|u\|^p_{L^\infty(U)}\sum_{\ii\in\IIeps}
\int_{\tBij\times(U\setminus \tBij)}
\frac{|\zeta(\lambdaj^{-1}(x-\xiij))|^p}{|x-y|^{n+sp}}dxdy,
$$
then integrate out $y$, scale back the $x$ variable, 
and finally use Hardy inequality taking into account 
that $\zeta\in C^\infty_c(B_N)$:
\begin{multline}\label{e:dec43}
I^{4,3}_j\leq \cnps\|u\|^p_{L^\infty(U)}\sum_{\ii\in\IIeps}\int_{\tBij}
\frac{|\zeta(\lambdaj^{-1}(x-\xiij))|^p}{\dist^{sp}(x,\partial
  \tBij)}dx
=\cnps\|u\|^p_{L^\infty(U)}\lambdaj^{n-sp}\sum_{\ii\in\IIeps}\int_{B_N}
\frac{|\zeta(x)|^p}{\dist^{sp}(x,\partial B_N)}dx\\
\leq\cnps\|u\|^p_{L^\infty(U)}\lambdaj^{n-sp}\#(\IIeps)|\zeta|^p_{\Wsp(B_N)}
\leq c\|u\|^p_{L^\infty(U)}\L^n\left((\partial U)_{\Rj}\right)|\zeta|^p_{\Wsp(B_N)}
\end{multline}
by \eqref{e:counting}$_2$. In conclusion, 
\eqref{e:dec42} and \eqref{e:dec43} give \eqref{e:dec4}.

\medskip
{\it Step 5. Estimate of  $I^5_j$:
\begin{equation}
  \label{e:dec5}
  \limsup_j I^5_j\leq \epsilon_\delta.
\end{equation}
} 
The computations are similar to the previous step once one notices that
for any $(x,y)\in \tBij\times U_j$,
$\ii\in\IIeps$, it holds
$$
u_j(x)-u_j(y)=
\left(1-\zeta\left(\lambdaj^{-1}(x-\xiij)\right)\right)w_j(x)-w_j(y)
=(w_j(x)-w_j(y))-\zeta\left(\lambdaj^{-1}(x-\xkkj)\right)w_j(x).
$$
Hence, we bound $I^5_j$ by the sum of two terms, the first analogous
to $I^{4,1}_j$ and the second to $I^{4,3}_j$. Thus, \eqref{e:dec5} follows.

\medskip
{\it Step 6. Estimate of  $I^6_j$:
\begin{equation}
  \label{e:dec6}
  \limsup_jI^6_j\leq \epsilon_\delta.
\end{equation}
} 
For $(x,y)\in\Bij\times\hat{B}^\kk_j$, $\ii\in\Ieps$ 
and $\kk\in\IIeps$, we write
\begin{eqnarray*}
\lefteqn{u_j(x)-u_j(y)=
\left(1-\xij\left(\lambda_j^{-1}(x-\xiij)\right)\right)(\uij-w_j(x))}\\&&
+\left(1-\xij\left(\lambda_j^{-1}(x-\xiij)\right)\right)w_j(x)
-\left(1-\zeta\left(\lambdaj^{-1}(y-\xiij)\right)\right)w_j(y).
\end{eqnarray*}
Thus, we infer
\begin{multline*}
I^6_j\leq \cnps\sum_{(\ii,\kk)\in\Ieps\times\IIeps}
\int_{\Bij\times \tBkj}
\frac{|w_j(x)-\uij|^p}{|x-y|^{n+sp}}dxdy
+\cnps\|u\|^p_{L^\infty(U)}\sum_{(\ii,\kk)\in\Ieps\times\IIeps}
\int_{\Bij\times \tBkj}
\frac{|\zeta(\lambdaj^{-1}(y-\xkkj))|^p}{|x-y|^{n+sp}}dxdy\\
+\cnps\|u\|^p_{L^\infty(U)}\sum_{(\ii,\kk)\in\Ieps\times\IIeps:\,
0<|\xiij-\xkkj|<\delta\}}\int_{\Bij\times \tBkj}
\frac{|\xij(\lambdaj^{-1}(x-\xiij))|^p}{|x-y|^{n+sp}}dxdy
:=I^{6,1}_j+I^{6,2}_j+I^{6,3}_j.
\end{multline*}
Clearly, $I^{6,1}_j$ can be estimated as $I^{3,2}_j$,
$I^{6,2}_j$ as $I^{4,3}_j$, and $I^{6,3}_j$ as $I^{2,2}_j$.
In conclusion, \eqref{e:dec6} follows.

\medskip
{\it Step 7: Conclusion.} By collecting Step 1 - Step 6 we infer
$$
\limsup_j\FF_j(u_j)\leq\FF(u)+\epsilon_\delta+\epsilon_N,
$$
with the two terms on the rhs above infinitesimal as 
$\delta\to 0^+$ and as $N\to+\infty$, 
respectively. 
\end{proof}

\subsection{Generalizations}\label{s:generalizations}
Anisotropic and homogeneous variations of the fractional semi-norm
can be treated essentially in the same way.
Consider a $\Ln$-measurable kernel 
$K:\Rn\setminus\{\underline{0}\}\to(0,+\infty)$ such that for all
$z\in\Rn\setminus\{\underline{0}\}$ and for some constant $\alpha\geq 1$
it holds 
\begin{equation}\label{e:Kprop}
K(tz)=t^{-(n+sp)}K(z)\quad t>0,\quad\quad
\alpha^{-1} |z|^{-(n+sp)}\leq K(z)\leq \alpha |z|^{-(n+sp)}. 
\end{equation}
Define $\KK:\Wsp(\Rn)\times\AA(\Rn)\to[0,+\infty)$ as
\begin{equation}\label{e:KK}
\KK(u,A):=\int_{A\times A}K(x-y)|u(x)-u(y)|^p\,dxdy,
\end{equation}
dropping the dependence on the set of integration in case $A=\Rn$.
All the relevant quantities introduced in the preceding sections have 
analogous counterparts simply replacing the kernel 
$|\cdot|^{-(n+sp)}$ with $K$. For instance, the locality defect 
associated to the energy $\KK$ is given by
$$
\mathcal{D}_{\KK}(u,E):=\int_E K(x-y)|u(x)-u(y)|^pdxdy,
$$
for any $E\subseteq U\times U$ $\L^{n\times n}$-measurable. 
We point out that in general, $\mathcal{D}_{\KK}(u,A\times B)\neq
\mathcal{D}_{\KK}(u,B\times A)$, $A$ and $B$ being measurable subsets of 
$\Rn$. Nevertheless, a splitting formula similar to \eqref{locdef} holds.

The only relevant changes are in the proof of Lemma~\ref{loccap} in 
which we exploited the invariance of the kernel $|\cdot|^{-(n+sp)}$ 
under rotations to establish \eqref{e:cap1}.
As already noticed before the statement of Lemma~\ref{loccap}, and 
as it turns out from the proof of Propositions~\ref{lb}, \ref{ub},
in the current deterministic setting it is sufficient to prove 
pointwise convergence of the relative capacities to infer the lower 
bound estimate. This is the content of the next lemma. The argument 
below does not give uniform convergence.
\begin{lemma}\label{loccap2}
Let $r>0$ and define for every $T\subset\Rn$
$$
\CSPK(T,B_r):=\inf\left\{\KK(w):\,
w\in\Wsp(\Rn),\, w=0 \text{ on }\Rn\setminus B_r,\,
\tilde{w}\geq 1\text{ q.e. on } T\right\}.
$$ 
Then 
\begin{equation}\label{e:cap12}
\lim_{r\to+\infty}\CSPK(T,B_r)=\cspk(T).
\end{equation}
Moreover, there exists a constant $\cnps=c(n,s,p,\alpha)$ such that 
for all $0<r<R$ 
\begin{equation}\label{e:cap22}
\cspk(T)-\cspk(T,B_R;r)\leq
\frac{\cnps r^{sp}}{(R-r)^{sp}}\CSPK(T,B_r),
\end{equation}
where $\cspk(T,B_R;r):=\inf\left\{\KK(w,B_R):\,
w\in\Wsp(\Rn),\, w=0 \text{ on }\Rn\setminus B_r,\,
\tilde{w}\geq 1\text{ q.e. on } T\right\}$. 

\end{lemma}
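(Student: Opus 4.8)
The plan is to follow the scheme of Lemma~\ref{loccap}. The one genuinely new point, and the main obstacle, is that the radial rearrangement used there to prove the uniform estimate~\eqref{e:cap1} has no anisotropic analogue; I would replace it by a \emph{two-step truncation} of the capacitary potential of $T$, first in value and then in space, the value-truncation being indispensable (see below). This yields only the pointwise statement \eqref{e:cap12}, which is all that is needed. One may assume $\cspk(T)<+\infty$, since otherwise every function admissible for $\CSPK(T,B_r)$ is admissible for $\cspk(T)$, so $\CSPK(T,B_r)\geq\cspk(T)=+\infty$ for all $r$ and both claims are trivial. By \eqref{e:Kprop} the energy $\KK$ is comparable to the Gagliardo seminorm, so coercivity holds in $L^{p^\ast}(\Rn)$ and the $\KK$-capacitary problem has a minimizer $u\in\Ksp(\Rn)$ with $\KK(u)=\cspk(T)$ and $\tilde u\geq 1$ q.e. on $T$; moreover the kernel-independent truncation inequalities \eqref{e:trunc} (used at the start of the proof of Lemma~\ref{l:cap}) give $0\leq u\leq 1$ $\Ln$-a.e. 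Since $r\mapsto\CSPK(T,B_r)$ is nonincreasing and $\geq\cspk(T)$, it then suffices to produce, for each $\delta\in(0,1)$ and all large $r$, a function admissible for $\CSPK(T,B_r)$ of energy $\leq(1-\delta)^{-p}\cspk(T)$ up to an error vanishing as $r\to+\infty$.

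To this end I would set $v:=\psi_\delta(u)$ with $\psi_\delta(t):=\big(\frac{t-\delta}{1-\delta}\big)\vee 0$, so that $v\in\Ksp(\Rn)$, $0\leq v\leq 1$, $\tilde v\geq 1$ q.e. on $T$ (as $\psi_\delta(1)=1$ and $\psi_\delta$ is nondecreasing), $\KK(v)\leq(1-\delta)^{-p}\cspk(T)$, and --- crucially --- $\{v>0\}=\{u>\delta\}$ has \emph{finite} Lebesgue measure because $u\in L^{p^\ast}(\Rn)$; this is exactly where the value-truncation enters, and it is indispensable, since for $u$ itself $\{u>0\}$ may have infinite measure and $\int_{B_R}|u|^p$ grows like $R^{sp}$ by H\"older's inequality and $u\in L^{p^\ast}(\Rn)$, exactly cancelling the $R^{-sp}$ gained below from the cut-off. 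Fixing a cut-off $\eta_R$ between $B_R$ and $B_{2R}$ and putting $w_R:=v\,\eta_R$, for $R$ so large that $T\subseteq B_R$ the function $w_R$ is admissible for $\CSPK(T,B_{2R})$, and since $u\mapsto\KK(u)^{1/p}$ is a seminorm, $\KK(w_R)^{1/p}\leq\KK(v)^{1/p}+\KK\big(v(1-\eta_R)\big)^{1/p}$; so everything reduces to $\KK\big(v(1-\eta_R)\big)\to 0$ as $R\to+\infty$. Splitting $v(1-\eta_R)(x)-v(1-\eta_R)(y)$ in the usual way and estimating $K\leq\alpha|\cdot|^{-(n+sp)}$ via \eqref{e:Kprop}, one bounds $\KK\big(v(1-\eta_R)\big)$ by a constant times
\[
\int_{B_R^c\times\Rn}K(x-y)\,|v(x)-v(y)|^p\,dxdy+\alpha\int_{\Rn\times\Rn}\frac{|v(y)|^p\,\big|(1-\eta_R)(x)-(1-\eta_R)(y)\big|^p}{|x-y|^{n+sp}}\,dxdy.
\]
The first term vanishes as $R\to+\infty$ by dominated convergence ($\KK(v)<+\infty$ and $B_R^c\times\Rn\downarrow\emptyset$); in the second, the inner $x$-integral is $\leq\cnps R^{-sp}$ uniformly in $y$ (split $|x-y|<R$, where $|(1-\eta_R)(x)-(1-\eta_R)(y)|\leq|x-y|/R$, from $|x-y|\geq R$, where it is $\leq 1$), so, since $\int_{\Rn}|v|^p\leq\Ln(\{v>0\})<+\infty$, the second term is $\leq\cnps\,\alpha\,R^{-sp}\,\Ln(\{v>0\})\to 0$. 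Hence $\limsup_{R\to+\infty}\CSPK(T,B_{2R})^{1/p}\leq(1-\delta)^{-1}\cspk(T)^{1/p}$, and letting $\delta\to 0^+$ together with monotonicity yields \eqref{e:cap12}.

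Finally, \eqref{e:cap22} would be obtained by transcribing the proof of \eqref{e:cap2}. Given $u$ admissible for the problem defining $\cspk(T,B_R;r)$ --- hence also for $\cspk(T)$ --- and using the $\KK$-analogue of the splitting formula \eqref{locdef} together with $u\equiv 0$ off $\overline{B}_r$, one has
\[
\cspk(T)\leq\KK(u,B_R)+\int_{B_r\times B_R^c}K(x-y)\,|u(x)|^p\,dxdy+\int_{B_R^c\times B_r}K(x-y)\,|u(y)|^p\,dxdy.
\]
Estimating $K\leq\alpha|\cdot|^{-(n+sp)}$, each cross term is $\leq\alpha\,\cnps\int_{B_r}|u|^p\,\dist^{-sp}(\cdot,\partial B_R)$ by the singular-kernel estimate in Lemma~\ref{Adams}(ii) (about the symmetric kernel $|\cdot|^{-(n+sp)}$, hence usable in either variable), thus $\leq\alpha\,\cnps\,(R-r)^{-sp}\int_{B_r}|u|^p$ since $\dist(\cdot,\partial B_R)\geq R-r$ on $B_r$; and $\int_{B_r}|u|^p\leq c_P\,r^{sp}\,|u|_{\Wsp(B_r)}^p\leq\alpha\,c_P\,r^{sp}\,\KK(u,B_r)$ by the scaled Poincar\'e inequality \eqref{e:P} (Remark~\ref{r:PWscaled}) and \eqref{e:Kprop}. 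Using $\KK(u,B_r)\leq\KK(u,B_R)$, passing to the infimum over such $u$, and recalling $\cspk(T,B_R;r)\leq\CSPK(T,B_r)$, one gets \eqref{e:cap22} with $\cnps=c(n,s,p,\alpha)$; the only difference from \eqref{e:cap2} is the extra factors of $\alpha$. All remaining estimates are the routine anisotropic bookkeeping already met in Lemma~\ref{loccap} and in subsection~\ref{s:generalizations}.
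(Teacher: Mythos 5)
Your argument for \eqref{e:cap22} coincides with the paper's, which simply invokes the proof of \eqref{e:cap2} with the extra $\alpha$-factors from \eqref{e:Kprop}$_2$; nothing to add there.

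For \eqref{e:cap12} you take a genuinely different route. The paper does not work with the $\Ksp(\Rn)$-minimizer at all: it fixes any admissible $u\in\Wsp(\Rn)$ with $0\leq u\leq 1$ a.e. and $\tilde u\geq 1$ q.e.\ on $T$, sets $w_r:=\varphi_r u$ with $\varphi_r$ a cut-off between $B_{r/2}$ and $B_{3r/4}$, and estimates the energy of $w_r$ via the splitting formula and Lemma~\ref{Adams}(ii), finding an error of size $r^{-ps}\int_{B_r\setminus\overline{B}_{r/2}}|u|^p\,dx$ plus two locality-defect terms. Since $u\in\Wsp(\Rn)$ is automatically in $L^p(\Rn)$, all three error terms vanish as $r\to+\infty$, whence $\lim_r\CSPK(T,B_r)\leq\KK(u)$; taking the infimum over such $u$ gives \eqref{e:cap12} without any value-truncation. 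You instead start from the capacitary potential, which lives only in $\Ksp(\Rn)$; the delta-shift $\psi_\delta$ is then needed precisely because $\{u>0\}$ may have infinite measure, so that $\int_{B_R}|u|^p$ can grow like $R^{sp}$, cancelling the gain from the cut-off. Your diagnosis of that cancellation is correct, but it only arises because you insisted on the minimizer; the ``indispensability'' of the value-truncation is an artefact of that choice, not of the problem. Both routes are valid; the paper's is a touch leaner because it never leaves $\Wsp(\Rn)$, while yours makes the mechanism of the cut-off error more explicit and shows how to proceed even if one works with the (generically non-$L^p$) extremal function.
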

\begin{proof}
Estimate \eqref{e:cap22} can be derived as we did for \eqref{e:cap2}
thanks to \eqref{e:Kprop}$_2$.

It is clear that $\cspk(T)\leq\CSPK(T,B_r)\leq \CSPK(T,B_R)$ for all 
$0<R<r$, so that 
$$
\lim_{r\to+\infty}\CSPK(T,B_r)\geq\cspk(T).
$$
Fix $r>0$ such that $T\subseteq B_{r/2}$ and let $u\in\Wsp(\Rn)$ be 
such that $0\leq u\leq 1$ $\Ln$ a.e. on $\Rn$, $\tilde{u}\geq 1$ q.e. 
on $T$.
Given $\varphi_r$ a cut-off function between $B_{r/2}$ and $B_{3r/4}$ 
set $w_r:=\varphi_r u$.
By construction $w_r=u$ on $B_{r/2}$, $w_r\equiv 0$ on 
$\Rn\setminus B_{3r/4}$ and $\tilde{w}_r\geq 1$ q.e. on $T$. 
We claim that $w_r\in\Wsp(\Rn)$ with
\begin{eqnarray}\label{t2}
\lefteqn{\KK(w_r,B_r)\leq \KK(u,B_{r/2})}\\&&
+c(n,p,s,\alpha)
\left(\mathcal{D}_{\KK}(u,B_{r/2}\times(B_r\setminus\overline{B}_{r/2}))
+\mathcal{D}_{\KK}(u,(B_r\setminus\overline{B}_{r/2})\times B_{r/2})
+r^{-ps}\int_{B_r\setminus\overline{B}_{r/2}}|u|^pdx\right)
\notag.
\end{eqnarray}
To this aim we bound the energy of $w_r$ on $B_r$ by
\begin{eqnarray}\label{t1}
\KK(w_r,B_r)\leq\KK(u,B_{r/2})+
\mathcal{D}_{\KK}(w_r,B_{r/2}\times(B_r\setminus\overline{B}_{r/2}))+
\mathcal{D}_{\KK}(w_r,(B_r\setminus\overline{B}_{r/2})\times B_{r/2}).
\end{eqnarray}
We estimate the first of the two locality defect terms in \eqref{t1}, 
the argument for the second being analogous. 
In doing that we can follow the argument used in Lemma~\ref{tecnico2} 
for the locality defect term (see \eqref{0calcolo0}-\eqref{stima4}).
Then, by \eqref{e:Kprop}$_2$ and since $0\leq\varphi_r\leq 1$, we infer
\begin{eqnarray*}
\lefteqn{\mathcal{D}_{\KK}(w_r,B_{r/2}\times(B_r\setminus\overline{B}_{r/2}))}
\\&&\stackrel{\pm\varphi_r(x)u(y)}{\leq}
2^{p}\left(\mathcal{D}_{\KK}(u,B_{r/2}\times(B_r\setminus\overline{B}_{r/2})
+\alpha\int_{B_{r/2}\times(B_r\setminus\overline{B}_{r/2})}
\frac{|(\varphi_r(x)-\varphi_r(y))u(y)|^p}{|x-y|^{n+sp}}dxdy\right)\\&&
\leq c(n,p,s,\alpha)
\left(\mathcal{D}_{\KK}(u,B_{r/2}\times(B_r\setminus\overline{B}_{r/2})
+ r^{-ps}\int_{B_r\setminus\overline{B}_{r/2}}|u|^pdy\right).
\end{eqnarray*}
Formula \eqref{t2} then follows at once.

Furthermore, since $w_r=0$ on $B^c_{3r/4}$, \eqref{e:Kprop}$_2$ and  
\eqref{e:Adams2} in Lemma~\ref{Adams} yield 
$$
\mathcal{D}_{\KK}(w_r,B_r\times (\Rn\setminus \overline{B}_r))\leq 
c(n,p,s)\alpha\, r^{-ps}\int_{B_r}|u|^pdx,
$$
and thus we conclude $w_r\in\Wsp(\Rn)$.

Eventually, since 
$\lim_r{\mathcal D}_{\KK}(u,\Rn\times(\Rn\setminus\overline{B}_{r/2}))=
\lim_r{\mathcal D}_{\KK}(u,(\Rn\setminus\overline{B}_{r/2})\times\Rn)=0$ 
and $u\in L^p(\R^n)$, from \eqref{t2} we conclude
$$
\lim_r\CSPK(T,B_r)\leq\limsup_r\KK(w_r,B_r)\leq\KK(u).
$$
Taking the infimum on all admissible functions $u$ we conclude.
\end{proof}

With fixed $U\in\AA(\Rn)$, consider 
$\KKepsj:L^p(U)\to[0,+\infty]$ given by
\begin{eqnarray}  \label{e:fapproxK}
  \KKepsj(u)=
  \begin{cases}
\KK(u,U) & \text{ if } u\in \Wsp(U),\,
\tildeu=0\,\,  \csp \text{ q.e. on } \Kj\cap U\\
+\infty & \text{ otherwise. }
  \end{cases}
\end{eqnarray}
\begin{theorem}\label{main2}
Let  $U\in\AA(\Rn)$ be bounded and connected with Lipschitz regular boundary. 

Then the sequence 
$(\KKepsj)_{j\in\N}$ $\Gamma$-converges in the $L^p(U)$
topology to $\KKK:L^p(U)\to[0,+\infty]$ defined by
\begin{equation}\label{e:Glimit2}
\KKK(u)=\KK(u)+\theta\,\cspk(T)\int_{U}|u(x)|^p\beta(x)\,dx
\end{equation}
if $u\in \Wsp(U)$, $+\infty$ otherwise in $L^p(U)$, where
$$
\cspk(T):=\inf\left\{\KK(w):\,w\in\Wsp(\Rn),\,\tilde{w}\geq 1
\text{ q.e. on } T\right\}.
$$
\end{theorem}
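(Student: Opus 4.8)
The plan is to repeat almost verbatim the scheme of Section~\ref{s:proof}, under the same standing assumptions \eqref{e:Rj/rj}--\eqref{e:unif-distrib}: the statement decomposes into a liminf inequality (the analogue of Proposition~\ref{lb}) and a limsup inequality (the analogue of Proposition~\ref{ub}). The key observation is that every estimate in that section invoked the isotropic kernel only through its homogeneity of degree $-(n+sp)$ and a two-sided bound (trivially satisfied with constant $1$); by \eqref{e:Kprop} the kernel $K$ has exactly the same homogeneity and satisfies the two-sided bound with constant $\alpha$, so each constant $c=c(n,p,s)$ appearing in Section~\ref{s:proof} turns into a constant $c=c(n,p,s,\alpha)$ and nothing else changes structurally. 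The one true replacement is that Lemma~\ref{loccap} is no longer available --- its proof used rotational invariance and radial rearrangement to obtain \emph{uniform} convergence of the relative capacities --- and must be substituted by Lemma~\ref{loccap2}, which provides the pointwise convergence $\CSPK(T,B_r)\to\cspk(T)$ together with the estimate \eqref{e:cap22}. As remarked before the statement of Lemma~\ref{loccap}, pointwise convergence is all the deterministic lower bound needs, since the relative capacities that have to be passed to the limit carry \emph{fixed} sets.

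First I would record the anisotropic versions of Lemmas~\ref{tecnico2} and \ref{joining}. Their proofs carry over with the obvious changes: the splitting formula \eqref{locdef} becomes $\KK(w,A\cup B)=\KK(w,A)+\KK(w,B)+\mathcal{D}_{\KK}(w,A\times B)+\mathcal{D}_{\KK}(w,B\times A)$, the two mixed locality defects being now distinct, so both orderings of each product must be carried along the computation (this at most doubles the number of summands); the short-range terms are controlled by $K(z)\le\alpha|z|^{-(n+sp)}$ together with Lemma~\ref{Adams} applied with $\nu=n+(s-1)p$; the long-range terms by $K(z)\le\alpha|z|^{-(n+sp)}$ together with the counting estimate \eqref{e:counting2}; and the scaling step uses the homogeneity in \eqref{e:Kprop} and the scaled Poincar\'e--Wirtinger inequality \eqref{e:PWscaled}, which is kernel-free. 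Since a bound on $\KK(u_j,U)$ is equivalent, via \eqref{e:Kprop}, to a bound on $|u_j|_{\Wsp(U)}$, the anisotropic joining lemma applies to the sequences we meet, with constants depending additionally on $\alpha$.

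For the liminf inequality I would follow Proposition~\ref{lb} step by step: use the anisotropic joining lemma to replace $u_j$ by $w_j$, which on each annulus $\Cij$ equals the corresponding mean value, at the cost of an error of order $\KK(u_j,U)/N$; split $\KK(w_j,U)$ into the part over $(U\times U)\setminus\Delta_\delta$, lower semicontinuous under $L^p$ convergence by Fatou's lemma, plus $\sum_{\ii}\KK(w_j,\Vij)\ge\sum_{\ii}\KK(w_j,\Bij)$; bound each $\KK(w_j,\Bij)$ from below by a relative capacity $\cspk(\Kij,\Bij;\,\cdot\,)$, which after translating and rescaling by $\lambdaj$ (using the homogeneity of $K$) becomes $\lambdaj^{n-sp}|(u_j)_{\Cij}|^p\,\cspk(T,B_R;r)$ for suitable $R=mr$; apply \eqref{e:cap22} and then the boundedness of $\CSPK(T,B_r)$ from \eqref{e:cap12} to recover $\cspk(T)$ up to an error $\epsilon_\delta\to0$; finally use $\Psi_j\rightharpoonup\theta\beta$ weakly-$*$ in $L^\infty(U)$ (as in Proposition~\ref{lb}, via \eqref{e:lambdaj}, \eqref{e:unif-distrib}, \eqref{e:counting}) together with the strong $L^p$ convergence $\zeta_j\to u$ supplied by the joining lemma, and let $A$ increase to $U$, then $N\to+\infty$, then $\delta\to0^+$. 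This gives $\liminf_j\KKepsj(u_j)\ge\KKK(u)$.

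For the limsup inequality I would reduce to $u\in W^{1,\infty}(U)$ by density and lower semicontinuity of the $\Gamma$-$\limsup_j\KKepsj$, pick a $(1/N)$-minimizer $\xi_N$ of $\CSPK(T,B_N)$, build the recovery sequence exactly as in \eqref{e:recovery} with this $\xi_N$ in place of the isotropic capacitary test function, and estimate the six terms $I^1_j,\dots,I^6_j$ of the diagonal decomposition as in Steps~1--6 of Proposition~\ref{ub}. The dominant term $I^1_j$ again yields $\theta\,(\cspk(T)+\epsilon_N)\int_U|u(x)|^p\beta(x)\,dx$ through the rescaling $\lambdaj^{n-sp}\KK(\xi_N,\cdot)$, Lemma~\ref{loccap2} and $\Psi_j\rightharpoonup\theta\beta$; the remaining terms are infinitesimal, the inputs being Hardy's inequality (Theorem~\ref{hardy}, Remark~\ref{Hardyscaled}), the singular-kernel estimates of Lemma~\ref{Adams}, the counting bounds of Proposition~\ref{p:voroprop2}, and the vanishing of the locality defect of $\xi_N$ on complements of large balls --- the anisotropic analogue of Remark~\ref{r:caplodef}, which follows from \eqref{e:cap22}, \eqref{e:cap12}, a scaled Poincar\'e inequality and $K\le\alpha|\cdot|^{-(n+sp)}$. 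I expect the main obstacle to be exactly this bookkeeping: re-running Lemmas~\ref{tecnico2}--\ref{joining} and Steps~1--6 of Proposition~\ref{ub} while respecting the loss of symmetry of $\mathcal{D}_{\KK}$. No new idea is needed, since \eqref{e:Kprop} reduces every integral arising from $K$ to one already handled in the isotropic case.
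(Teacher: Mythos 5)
Your proposal matches the paper's proof: the paper likewise reduces Theorem~\ref{main2} to the scheme of Theorem~\ref{main}, replacing Lemma~\ref{loccap} by the pointwise convergence of Lemma~\ref{loccap2}, noting that Lemmas~\ref{tecnico2} and~\ref{joining} restate verbatim under the splitting formula, using \eqref{e:Kprop}$_1$ for the rescaling in the analogue of \eqref{e:stima1}, taking $\xi_N$ a $(1/N)$-minimizer of $\CSPK(T,B_N)$, and controlling $I^2_j,\dots,I^6_j$ via the comparability $\alpha^{-1}\defectsp(u,E)\leq\mathcal{D}_{\KK}(u,E)\leq\alpha\defectsp(u,E)$. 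Your additional attention to the asymmetry of $\mathcal{D}_{\KK}$ is correct and consistent with the paper's own remark following \eqref{e:KK}.
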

\begin{proof}
The proof is the same of Theorem~\ref{main} a part from the necessary
changes to the various technical lemmas preceding Theorem~\ref{main}.
We indicate how these preliminaries must be appropriately restated.

We have shown the (pointwise) convergence of relative capacities in 
Lemma~\ref{loccap2}.
Changing the relevant quantities according to the substitution 
of the kernel $|\cdot|^{-(n+sp)}$ with $K$, Lemmas~\ref{tecnico2} 
and \ref{joining} have analogous statements since the splitting 
formula \eqref{locdef} does.

In the proof of Proposition~\ref{lb} we use the homogeneity of the 
kernel $K$ (see \eqref{e:Kprop}$_1$) for the scaling argument in
\eqref{e:stima1} leading to the analogue of formula \eqref{e:capacity}. 
This is the reason why we ask for \eqref{e:Kprop}$_1$.

Proposition~\ref{ub} needs few changes: in the definition of 
$u_j$ choose $\xij$ to be a $(1/N)$-minimizer of $\CSPK(T,B_N)$,
the estimate of $I^1_j$ follows straightforward. 
For what the terms $I^h_j$, $h\in\{2,\ldots,6\}$ are concerned 
it suffices to take note that by condition \eqref{e:Kprop}$_2$ 
the locality defect satisfies
$\alpha^{-1}\defectsp(u,E)\leq\mathcal{D}_{\KK}(u,E)\leq\alpha\defectsp(u,E)$.
Hence, we can follow exactly Steps 2-6 to conclude.
\end{proof}
\begin{remark}
Dropping the homogeneity assumption \eqref{e:Kprop}$_1$ on the kernel 
$K$ the $\Gamma$-limit might not exist. This fact had already been 
noticed in the local case (see \cite{ANB}). 
Though, in such a framework abstract integral representation 
and compactness arguments for local functional imply 
$\Gamma$-convergence up to subsequences.
\end{remark}

\section{Random Settings}\label{s:random}
In this section we extend our analysis to two different random 
settings. 
First, we deal with obstacles having random sizes and shapes 
located on points of a periodic lattice as introduced by 
Caffarelli and Mellet \cite{Caf-Mel1}, \cite{Caf-Mel2} 
(see also \cite{Foc}). In particular, we provide a self-contained 
proof of the results in  \cite{Caf-Mel2}, \cite{Foc} avoiding 
extension techniques. 
Second, we consider random homothetics copies of a given obstacle set
placed on random Delone sets of points following the approach 
by Blanc, Le Bris and Lions to define the energy of microscopic 
stochastic lattices \cite{BLBL1} and to study some variants of the 
usual stochastic homogenization theory \cite{BLBL2}.

We have not been able to work out a unified approach for the 
two frameworks described above. The main issue for this being related 
to the interplay between the weighted version of the pointwise 
ergodic theorem in Theorem~\ref{ergo-w} below and stationarity 
for random Delone sets of points (see \eqref{e:statlatt}).

In both cases we are given a probability space 
$(\Om,\mathscr{P},\mmu)$ such that 
the group $\Z^n$ acts on $\Om$ via measure-preserving transformations 
$\tau_{\ii}:\Om\to\Om$. The $\sigma$-subalgebra of $\mathscr{P}$ of 
the invariant sets of the $\tau_\ii$'s, i.e. $\Omp\subseteq\Om$ such that 
$\tau_{\ii}\Omp=\Omp$ for all $\ii\in\Z^n$, is denoted by $\mathscr{I}$.
Recall that $(\tau_{\ii})_{\ii\in\Z^n}$ is said to be \emph{ergodic} if 
$\mathscr{I}$ is trivial, i.e. $\Omp\in\mathscr{I}$ satisfies either 
$\mmu(\Omp)=0$ or $\mmu(\Omp)=1$.

In the sequel we keep the notation introduced in Section~\ref{s:determ} 
highlighting the dependence of relevant quantities on $\om$ when needed.

\subsection{Obstacles with Random sizes and shapes}\label{caf-mell}

In this subsection we deal with the case of obstacles with random
sizes and shapes located on points of a lattice.
We restrict to the standard cubic one only for the sake of simplicity 
(see Remark~\ref{r:nonstandlatt} below for extensions). 
More precisely, let $\Lambda_j=\epsj\Z^n$ with $(\epsj)_{j\in\N}$ 
a positive infinitesimal sequence, then we have $\xiij=\epsj\ii$, 
$\Vij=\epsj(\ii+[-1/2,1/2]^n)$, $r_{\Lambda_j}=\epsj/2$ and 
$R_{\Lambda_j}=\sqrt{n}\epsj/2$. In addition, 
if $\rj=\epsj/2$ then $\theta=\Ln(U)/2^n$ and $\beta\equiv 1/\Ln(U)$. 
Thus, to simplify the presentation we will use the scaling parameter 
$\epsj$ instead of $\rj$, and the more intuitive notation $\Qij$ 
for $\Vij$.

Let us now fix the assumptions on the distribution of obstacles
originally introduced by Caffarelli and Mellet \cite{Caf-Mel1}, 
\cite{Caf-Mel2}. 
For all $\om\in\Om$ and $j\in\N$ the obstacle set $\Teo\subseteq\R^n$
is given by $\Teoe:=\cup_{\ii\in\Z^n}\Teioe$, where the sets 
$\Teioe\subseteq\Qij$, satisfy the following conditions:
\begin{itemize}
\item[{\bf (O1).}] \emph{Capacitary Scaling:}
There exists 
a process $\gamma:\Z^n\times\Om\to[0,+\infty)$ such that
for all $\ii\in\Z^n$ and $\om\in\Om$
$$
\csp(\Teioe)=\epsj^n\,\gammaio.
$$
Moreover, for some $\gammab>0$ we have for all $\ii\in\Z^n$ and
$\mmu$ a.s. $\om\in\Om$
\begin{equation}\label{e:gammabdd}
\gammaio\leq\gammab.
\end{equation}

\item[{\bf (O2).}] \emph{Stationarity of the Process:}
The process $\gamma:\Z^n\times\Om\to[0,+\infty)$ is stationary w.r.to 
the family $(\tau_{\ii})_{\ii\in\Z^n}$, i.e. for all $\ii,\kk\in\Z^n$ 
and $\om\in\Om$
\begin{equation}
  \label{e:statgamma}
\gamma(\ii+\kk,\om)=\gamma(\ii,\tau_{\kk}\om).
\end{equation}

\item[{\bf (O3).}] \emph{Strong Separation:}
There exists a positive infinitesimal sequence $(\deltaj)_{j\in\N}$, 
with $\deltaj=o(\epsj)$ and $\epsj^{1+\frac n{n-sp}}=O(\deltaj)$, such 
that $\Teioe\subseteq \xiij+\deltaj(\Qij-\xiij)$ for all 
$\ii\in\Z^n$, $\om\in\Om$.
\end{itemize}
Take note that by (O2) we have
$\gamma(\ii,\om)=\gamma(\underline{0},\tau_{-\ii}\om)$, hence
the random variables $\gamma(\ii,\om)$ are identically distributed.  
The common value of their expectations is denoted by 
$\mathbb{E}[\gamma]$, i.e. 
$$
\mathbb{E}[\gamma]:=\int_\Om\gamma(\underline{0},\om)d\mmu.
$$
Moreover, $\mathbb{E}[\gamma,\mathscr{I}]$ denotes
the \emph{conditional expectation} of the process, i.e. the unique
$\mathscr{I}$-measurable function in $L^1(\Om,\mmu)$ such 
that for every set $\Omp\in\mathscr{I}$ 
$$
\int_{\Omp}\gamma(\underline{0},\om)d\mmu=
\int_{\Omp}\mathbb{E}[\gamma,\mathscr{I}](\om)d\mmu.
$$
In the sequel we analyze the asymptotics of the energies
$\KKepsj:L^p(U)\times\Om\to[0,+\infty]$ given by
\begin{eqnarray}  \label{e:fapproxKom2}
  \KKepsj(u,\om)=
  \begin{cases}
\KK(u) & \text{ if } u\in \Wsp(U),\,
\tildeu=0\,\,  \csp \text{ q.e. on } \Teoe\cap U\\
+\infty & \text{ otherwise. }
  \end{cases}
\end{eqnarray}
where $\KK$ is the functional in \eqref{e:KK}. 
\begin{theorem}\label{main4}
Let  $U\in\AA(\Rn)$ be bounded and connected with 
Lipschitz regular boundary, and assume that the kernel $K$
satisfies \eqref{e:Kprop} and is invariant under rotations.

If (O1)-(O3) hold true, then there exists a set 
$\Om^\prime\subseteq\Om$ of full probability
such that for all $\om\in\Om^\prime$ the sequence
$(\KKepsj(\cdot,\om))$ $\Gamma$-converges in the $L^p(U)$ topology 
to $\KKK:L^p(U)\times\Om\to[0,+\infty]$ defined by
\begin{equation}\label{e:Glimit4}
\KKK(u,\om)=\KK(u)+\mathbb{E}[\gamma,\mathscr{I}]
\int_U|u(x)|^p\,dx
\end{equation}
if $u\in \Wsp(U)$, $+\infty$ otherwise in $L^p(U)$.

If in addition $(\tau_{\ii})_{\ii\in\Z^n}$ is ergodic then the
$\Gamma$-limit is deterministic, i.e. 
$\mathbb{E}[\gamma,\mathscr{I}]=\mathbb{E}[\gamma]$ $\mmu$ a.s. in $\Om$.
\end{theorem}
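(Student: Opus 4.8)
The plan is to run the scheme of Theorems~\ref{main}--\ref{main2}, replacing the equidistribution input (Proposition~\ref{p:voroprop2} and assumption \eqref{e:unif-distrib}) by a pointwise ergodic statement. Since $\Lambda_j=\epsj\Z^n$ is a Delone set with $\rj=\epsj/2\sim r_{\Lambda_j}$, the geometric Lemmas~\ref{tecnico2} and \ref{joining} apply verbatim, with $|\cdot|^{-(n+sp)}$ replaced by $K$ as in the proof of Theorem~\ref{main2}. So the first step is to fix, once and for all, the exceptional null set. Choosing a countable family $\mathcal Q$ of open cubes $A\subset\subset U$ with rational vertices and a countable set $\mathscr D\subset C^0(\overline U)$ dense for the $L^p(U)$ norm, I would apply the weighted pointwise ergodic theorem (Theorem~\ref{ergo-w}) to the stationary process $\gamma$ (see (O2)), tested against $\chi_A$, $A\in\mathcal Q$, and against $|\varphi|^p$, $\varphi\in\mathscr D$; using the uniform bound \eqref{e:gammabdd}, this yields a set $\Om'\subseteq\Om$ with $\mmu(\Om')=1$ such that for every $\om\in\Om'$ the piecewise constant coefficients $\hat\gamma_j(x,\om):=\sum_{\ii\in\Z^n}\gammaio\,\chi_{\Qij}(x)$ converge weakly-$*$ in $L^\infty(U)$ to the constant $\mathbb{E}[\gamma,\mathscr{I}](\om)$, and $\epsj^n\sum_{\ii\in\Ieps(U)}\gammaio\,|\varphi(\xiij)|^p\to\mathbb{E}[\gamma,\mathscr{I}](\om)\int_U|\varphi|^p\dx$ for every $\varphi\in\mathscr D$. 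From now on $\om\in\Om'$ is fixed.

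\emph{Lower bound.} Let $u_j\to u$ in $L^p(U)$ with $\sup_j\KK(u_j,U)<+\infty$. I would repeat the argument of Proposition~\ref{lb}: Lemma~\ref{joining} produces $w_j$, constant on the annuli $\Cij$ and differing from $u_j$ only on thin shells carrying at most $\frac cN\KK(u_j,U)$ of energy; Fatou disposes of the part away from $\Delta_\delta$; and, as in \eqref{e:stima1}, the contribution of the cell $\Qij$ is bounded below by the $\KK$-energy on $\Bij$ of a competitor that is constant on $\Cij$ and vanishes on $\Teioe$, i.e. by the capacity of $\Teioe$ relative to $\Bij$. The crucial point is that the discrepancy between this relative capacity and $\cspk(\Teioe)$ is infinitesimal \emph{uniformly in $\ii$}: after the appropriate rescaling at $\xiij$, the strong separation (O3) places every obstacle in one fixed ball and (O1)--\eqref{e:gammabdd} bound the rescaled capacity by $\gammab$, so the uniform comparison of relative capacities in Lemma~\ref{loccap} --- restated for the (rotation-invariant) kernel $K$ as for Lemma~\ref{loccap2} --- applies with a modulus depending only on $m=\lfloor1/\delta\rfloor$. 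Since $\cspk(\Teioe)=\epsj^n\gammaio$ by (O1) and $\zeta_j=\sum_\ii(u_j)_{\Cij}\chi_{\Qij}\to u$ strongly in $L^p(U)$ by Lemma~\ref{joining}, the sum over the cells $\Qij\subseteq A$, $A\subset\subset U$, is bounded below by $\int_A\hat\gamma_j\,|\zeta_j|^p\dx-\epsilon_\delta\int_A|\zeta_j|^p\dx+o(1)$; as $\hat\gamma_j$ converges weakly-$*$ in $L^\infty$ to the constant $\mathbb{E}[\gamma,\mathscr{I}](\om)$ while $|\zeta_j|^p\to|u|^p$ in $L^1(A)$, taking the liminf and then letting $A\uparrow U$, $N\to+\infty$ and $\delta\to0^+$ gives $\liminf_j\KKepsj(u_j,\om)\ge\KKK(u,\om)$.

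\emph{Upper bound.} By density and lower semicontinuity of $\Gamma\hbox{-}\limsup\KKepsj(\cdot,\om)$ one may assume $u\in W^{1,\infty}(U)$, enlarging $\mathscr D$ so that $|u|^p\in\mathscr D$. I would mimic Proposition~\ref{ub}, replacing in \eqref{e:recovery} the single capacitary potential by a family $(\xi_j^\ii)_{\ii\in\Ieps(U)}$, each a $(1/N)$-almost minimiser of the capacity of the rescaled copy of $\Teioe$ relative to $B_N$; by (O1)--(O3) these exist with $\sup_{j,\ii}\KK(\xi_j^\ii)<+\infty$. In the decomposition \eqref{e:decompose} only $I^1_j$ carries the limit density: a change of variables, the uniform convergence $\CSPK(\cdot,B_N)\to\cspk(\cdot)$ and the Lipschitz continuity of $u$ give $I^1_j\le\epsj^n\sum_{\ii\in\Ieps(U)}(\gammaio+\epsilon_N)\,|u(\xiij)|^p+o(1)$, whence by the $\varphi=|u|^p$ instance of the ergodic theorem $\limsup_jI^1_j\le\mathbb{E}[\gamma,\mathscr{I}](\om)\int_U|u|^p\dx+\epsilon_N$, exactly as in Step~1 there. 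The remaining terms $I^2_j,\dots,I^6_j$ are purely geometric and are handled word for word as in Steps~2--6, using only the two-sided bound $\alpha^{-1}\defectsp(u,E)\le\mathcal{D}_{\KK}(u,E)\le\alpha\defectsp(u,E)$ from \eqref{e:Kprop} and the uniform bound $\gammaio\le\gammab$ to keep all constants independent of $\ii$ and $\om$. Letting $N\to+\infty$ and $\delta\to0^+$ gives $\limsup_j\KKepsj(u_j,\om)\le\KKK(u,\om)$, which with the lower bound establishes the $\Gamma$-convergence for every $\om\in\Om'$.

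\emph{Ergodic case, and the main obstacle.} If $(\tau_\ii)_{\ii\in\Z^n}$ is ergodic, $\mathscr{I}$ is $\mmu$-trivial, so the $\mathscr{I}$-measurable function $\mathbb{E}[\gamma,\mathscr{I}]$ is $\mmu$-a.s. constant and equals $\mathbb{E}[\gamma]$; hence the $\Gamma$-limit is deterministic. The step I expect to be the real obstacle is making the capacity comparison uniform over all the (infinitely many, random) obstacles \emph{simultaneously} with the weighted ergodic theorem: this is precisely where the two-sided calibration of scales in (O3) --- $\deltaj=o(\epsj)$ and $\epsj^{1+n/(n-sp)}=O(\deltaj)$ --- together with \eqref{e:gammabdd}, become indispensable, and, as remarked in the text, it is also why the Delone-set framework of Section~\ref{s:determ} cannot be subsumed under the same proof.
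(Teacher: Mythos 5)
Your proposal tracks the paper's proof essentially step for step: the exceptional set $\Om'$ comes from the weighted ergodic Theorem~\ref{ergo-w}, the lower bound reuses Lemma~\ref{joining} and bounds each cell below by the relative $K$-capacity of the rescaled obstacle (here (O1), (O3) and the uniform convergence of relative capacities in the $K$-analogue of Lemma~\ref{loccap} — made possible by rotation invariance of $K$ — are exactly what the paper invokes), and the upper bound replaces the single potential $\xi_N$ by per-obstacle $(1/N)$-minimizers, as the paper does. The only slip is peripheral: the paper's remark about the two random frameworks not being unified concerns the interaction between Theorem~\ref{ergo-w} and stationarity for random Delone sets (Section~4.2), not the deterministic framework of Section~\ref{s:determ}; this does not affect the proof itself.
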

The proof of Theorem~\ref{main4} builds upon Theorem~\ref{main2}  
and upon a weighted ergodic theorem established in \cite[Theorem 4.1]{Foc}. 
We give the proof of the latter result for the sake of convenience.
\begin{theorem}\label{ergo-w}
Let $\gamma$ be satisfying (O2), then for every bounded open set 
$V\subset\R^n$ with $\L^n(\partial V)=0$ we have $\mmu$ a.s. 
in $\Om$
\begin{equation}
  \label{e:Birkhoff0}
\lim_j\frac{1}{\#\Ieps(V)}\sum_{\ii\in\Ieps(V)}\gammaio=
\mathbb{E}[\gamma,\mathscr{I}],
\end{equation}
and
\begin{equation}
  \label{e:Birkhoff1}
\Psi_j(x,\om):=\sum_{\ii\in\Ieps(V)}\gammaio\chi_{\Qij}(x)
\to\mathbb{E}[\gamma,\mathscr{I}]\quad  \mathrm{weak}^\ast\, L^\infty(V).
\end{equation}
\end{theorem}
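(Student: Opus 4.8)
The plan is to deduce \eqref{e:Birkhoff0} from the classical multiparameter pointwise ergodic theorem for the measure preserving $\Z^n$-action $(\tau_\ii)_{\ii\in\Z^n}$, and then to obtain \eqref{e:Birkhoff1} from \eqref{e:Birkhoff0} by duality. Set $g:=\gamma(\underline 0,\cdot)$; by stationarity (O2) one has $\gammaio=g(\tau_{-\ii}\om)$ (recall $\gamma(\ii,\om)=\gamma(\underline 0,\tau_{-\ii}\om)$), and $g\in L^1(\Om,\mmu)$ since $0\le g\le\gammab$ by \eqref{e:gammabdd}. By the multiparameter pointwise ergodic theorem (Wiener, Tempel'man), for every fixed family of axis-parallel cubes $(Q_m)_{m\in\N}$ in $\Rn$ whose side length diverges to $+\infty$ there is a set of full $\mmu$-measure on which
$$
\frac{1}{\#(Q_m\cap\Z^n)}\sum_{\ii\in Q_m\cap\Z^n}g(\tau_{-\ii}\om)\;\longrightarrow\;\mathbb{E}[\gamma,\mathscr{I}](\om)\qquad(m\to+\infty).
$$
Given a cube $C$, the index set $\Ieps(C)=\{\ii\in\Z^n:\Qij\subseteq C\}$ is $\Z^n$ intersected with a cube deduced from $\epsj^{-1}C$ by a bounded shrinking, hence of side length $\sim\epsj^{-1}\ell(C)\to+\infty$, so the display applies along $(\Ieps(C))_{j\in\N}$. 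Running over the countably many cubes $C$ with rational vertices and intersecting the corresponding null sets, I fix a single set $\Om^\prime\subseteq\Om$ of full $\mmu$-measure on which all of these cube limits hold.

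Next I transfer the convergence from cubes to $V$. Since $\Ln(\partial V)=0$, the neighbourhoods $(\partial V)_\rho$ decrease to $\partial V$ and so $\Ln((\partial V)_\rho)\to0$ as $\rho\to0^+$; combined with the counting estimates \eqref{e:counting} (here $r_{\Lambda_j}=\epsj/2$, $R_{\Lambda_j}=\sqrt n\,\epsj/2$) this gives $\epsj^n\#\Ieps(V)\to\Ln(V)\in(0,+\infty)$, and likewise $\epsj^n\#\Ieps(C)\to\Ln(C)$ for every cube $C$. Fix $\eta>0$ and choose finitely many pairwise disjoint open cubes $C_1,\dots,C_M$ with rational vertices, $\overline{C}_l\subseteq V$ and $\Ln\big(V\setminus\bigcup_lC_l\big)<\eta$; then the sets $\Ieps(C_l)$ are disjoint subsets of $\Ieps(V)$. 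Writing the normalized sum over $\Ieps(V)$ as the convex combination $\sum_{l=1}^M\frac{\#\Ieps(C_l)}{\#\Ieps(V)}\big(\text{average over }\Ieps(C_l)\big)$ plus a remainder supported on $\Ieps(V)\setminus\bigcup_l\Ieps(C_l)$, and letting $j\to+\infty$ along $\Om^\prime$, the convex combination converges to $\frac{\Ln(\bigcup_lC_l)}{\Ln(V)}\,\mathbb{E}[\gamma,\mathscr{I}](\om)$, while the remainder is at most $\gammab\Big(1-\frac{\sum_l\epsj^n\#\Ieps(C_l)}{\epsj^n\#\Ieps(V)}\Big)\to\gammab\,\Ln\big(V\setminus\bigcup_lC_l\big)/\Ln(V)<\gammab\,\eta/\Ln(V)$, by \eqref{e:gammabdd}. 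Hence $\limsup_j\big|\frac{1}{\#\Ieps(V)}\sum_{\Ieps(V)}\gammaio-\mathbb{E}[\gamma,\mathscr{I}](\om)\big|\le 2\gammab\,\eta/\Ln(V)$ on $\Om^\prime$, and $\eta\to0^+$ yields \eqref{e:Birkhoff0}. This geometric transfer — squeezing the lattice ``digitization'' $\Ieps(V)$ of an open set subject only to $\Ln(\partial V)=0$ between finitely many cube averages, and keeping the exceptional null set independent of the (countably many) approximating cubes — is the step I expect to be the main point; everything else is bookkeeping.

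Finally I deduce \eqref{e:Birkhoff1}. Fix $\om\in\Om^\prime$; since $\|\Psi_j(\cdot,\om)\|_{L^\infty(V)}\le\gammab$ uniformly in $j$ by \eqref{e:gammabdd}, it suffices to prove $\int_V\Psi_j(x,\om)\varphi(x)\,dx\to\mathbb{E}[\gamma,\mathscr{I}](\om)\int_V\varphi\,dx$ for $\varphi$ in the dense subset $C(\overline V)$ of $L^1(V)$. As $\Ln(\Qij)=\epsj^n$ one has $\int_V\Psi_j(x,\om)\varphi(x)\,dx=\epsj^n\sum_{\ii\in\Ieps(V)}\gammaio\fint_{\Qij}\varphi\,dx$; partitioning $V$ into small cubes with rational vertices on which $\varphi$ oscillates by at most $\eta$, applying \eqref{e:Birkhoff0} with $V$ replaced by each such cube, and summing, gives the integral limit up to an $O(\eta)$ error, hence the equality. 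Since for fixed $\om$ the number $\mathbb{E}[\gamma,\mathscr{I}](\om)$ is a constant, this is exactly $\Psi_j(\cdot,\om)\rightharpoonup\mathbb{E}[\gamma,\mathscr{I}](\om)$ weak$^\ast$ in $L^\infty(V)$.
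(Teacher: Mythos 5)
Your proof is correct in outline but takes a genuinely different route from the paper, and one step in it needs a sharper ergodic-theoretic input than you invoke.

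The paper's proof builds the additive set-indexed process $F_V(\om)=\sum_{\ii+Q_1\subseteq V}\gamma(\ii,\om)$ and then appeals to the Krengel--Pyke uniform pointwise ergodic theorem for multiparameter additive processes, which delivers in one stroke the almost-sure convergence $j^{-n}F_{jV}(\om)\to\L^n(V)\,\mathbb{E}[\gamma,\mathscr I](\om)$ simultaneously for all bounded Borel $V$ with $\L^n(\partial V)=0$; the passage to the non-integer dilations $a_j=\lfloor1/\epsj\rfloor$ and the comparison with $\sum_{\Ieps(V)}\gammaio$ are then just the bookkeeping steps. You instead run the classical pointwise ergodic theorem only on cubes of lattice points and do the transfer from rational cubes to a general $V$ with null boundary by hand, via a finite disjoint inner approximation of $V$ by rational cubes and the $L^\infty$ bound $\gammaio\le\gammab$. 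That geometric squeeze is a perfectly valid and rather elegant substitute for the ``class of sets'' uniformity that the paper imports from Krengel--Pyke, and your dual argument for \eqref{e:Birkhoff1} (test against $C(\overline V)$, partition into small rational cubes, apply \eqref{e:Birkhoff0} per cube) is likewise a sound variant of the paper's test against characteristic functions of rational cubes.

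The gap is in the base ergodic input. You assert that the ``classical multiparameter pointwise ergodic theorem (Wiener, Tempel'man)'' gives almost sure convergence of $\frac{1}{\#(Q_m\cap\Z^n)}\sum_{\ii\in Q_m\cap\Z^n}g(\tau_{-\ii}\om)$ for \emph{every} sequence of axis-parallel cubes whose side length diverges, and then you apply this with $Q_m=\Ieps(C)$ for a fixed rational cube $C$. But for $C$ not containing the origin, the index sets $\Ieps(C)\approx\Z^n\cap\epsj^{-1}C$ are not nested and their centers drift off to infinity like $\epsj^{-1}x_C$; in particular they do not satisfy the hypotheses of Tempel'man's theorem (which requires an increasing Følner sequence), and it is known that pointwise ergodic averages along moving cubes can fail when the centers escape too fast (Akcoglu--del Junco type counterexamples). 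What saves you here is that $\Ieps(C)$ satisfies a \emph{cone condition} --- the center offset is comparable to the side length, uniformly in $j$ --- so a ``moving averages'' version of the ergodic theorem (Bellow--Jones--Rosenblatt in dimension one, and its $\Z^n$ analogues) does apply. You should either cite such a result explicitly, or, as the paper does, invoke the Krengel--Pyke theorem, which already incorporates the set-indexed uniformity and avoids the issue altogether. Once that reference is fixed, the rest of your argument goes through.
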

\begin{proof}
Define the operators $T_{\ii}:L^1(\Om,\mmu)\to L^1(\Om,\mmu)$ by 
$T_{\ii}(f):=f\circ\tau_{\ii}$ for every $\ii\in\Z^n$.
The group property of $(\tau_{\ii})_{\ii\in\Z^n}$ implies that
$\mathscr{S}=\left\{T_{\ii}\right\}_{\ii\in\Z^n}$ is a multiparameter 
semigroup generated by the commuting isometries 
$T_{{\tt e}_r}$ for $r\in\{1,\ldots,n\}$, being
$\{{\tt e}_1,\ldots,{\tt e}_n\}$ the canonical basis of $\Rn$.

We define a process $F$ on bounded Borel sets $V$ of $\Rn$ 
with values in  $L^\infty(\Om,\mmu)$. Set $Q_1:=[-1/2,1/2]^n$ and 
let $F$ be as follows 
$$
F_V(\om):=\sum_{\{\ii\in\Z^n:\,\ii+Q_1\subseteq V\}}\gamma(\ii,\om),
$$
with the convention that $F_V(\om):=0$ if the set of summation
is empty.
It is clear that $F$ is \emph{additive}, that is it satisfies
\begin{itemize}
\item[(i)] $F$ is stationary: $T_{\ii}\circ F_{V} =F_{V+\ii}$ 
for all $\ii\in\Z^n$;

\item[(ii)] $F_{V_1\cup V_2}=F_{V_1}+F_{V_2}$, for disjoint $V_1$, $V_2$;

\item[(iii)] the random variables $F_V$ are integrable; and

\item[(iv)] the spatial constant of the process 
$\overline{\gamma}(F):=\inf\{j^{-n}\int_\Om F_{jQ_1}d\mmu\}\in(0,\infty)$.

\end{itemize}
Indeed, (i) and (iv) follow by (O2) (actually 
$\overline{\gamma}(F)=\mathbb{E}[\gamma]$), (ii) by the 
very definition of $F$, and (iii) by the positivity of 
$\gamma$ and \eqref{e:gammabdd}.
It then follows from \cite[Theorem 2]{KrPy} and 
\cite[Remark (b) p.294]{KrPy} that there exists 
$\overline{f}\in L^1(\Om,\mmu)$ such that for all 
$V\in\BB(\Rn)$ bounded with $\Ln(\partial V)=0$ and $\mmu$ a.s. 
in $\Om$
\begin{equation}\label{e:krpy}
\lim_{j\to+\infty}
j^{-n}F_{jV}(\om)=\L^n(V)\overline{f}(\om).
\end{equation}
Stationarity and boundedness of $\gamma$ together with 
\eqref{e:krpy} yield that for any $\Omp\in\mathscr{I}$ we have
$$
\int_{\Omp}\overline{f}(\om)\,d\mmu
=\lim_{j\to+\infty}j^{-n}\int_{\Omp}F_{jQ_1}(\om)\,d\mmu
=\int_{\Omp}\gamma(\underline{0},\om)\,d\mmu,
$$
so that the limit $\overline{f}$ is actually given 
by $\mathbb{E}[\gamma,\mathscr{I}]$ (see for instance 
\cite[Theorem 2.3 page 203]{Kr}).

In particular, \eqref{e:krpy} still holds by substituting $(j)_{j\in\N}$ 
with any diverging sequence $(a_j)_{j\in\N}\subseteq\N$.
Take $a_j=\lfloor 1/\epsj\rfloor$ ($\lfloor t\rfloor$ stands for the 
integer part of $t$), then notice that for every $\delta>0$ and $j$ 
sufficiently big we have
$$
a_j^{-n}\left|\sum_{\ii\in\Ieps(V)}\gamma(\ii,\om)
-F_{a_jV}(\om)\right|
\leq\gammab a_j^{-n}\#\left(\Ieps(V)\triangle
\{\ii\in\Z^n:\,a_j^{-1}(\ii+Q_1)\subseteq V\}\right)\leq
\gammab\Ln((\partial V)_\delta),
$$
here $\triangle$ denotes the symmetric difference between the 
relevant sets.
Since $\epsj^n\#\Ieps(V)\to\L^n(V)$ and $a_j\epsj\to 1$ 
we infer \eqref{e:Birkhoff0}.

Eventually, in order to prove \eqref{e:Birkhoff1}
consider the family $\mathscr{Q}$ of all open cubes in $\R^n$
with sides parallel to the coordinate axes, and with center 
and vertices having rational coordinates.
To show the claimed weak$^\ast$ convergence it suffices to check that
 $\mmu$ a.s. in $\Om$ it holds
$$
\lim_j\int_\Om\Psi_j(x,\om)\chi_Q(x)\,d\L^n=
\L^n(Q)\mathbb{E}[\gamma,\mathscr{I}]
$$
for any $Q\in\mathscr{Q}$ with $Q\subseteq V$, $V$ as in the 
statement above. We have
\begin{eqnarray*}
\left|\int_{Q}(\Psi_j(x,\om)-
\mathbb{E}[\gamma,\mathscr{I}])d\L^n\right|
\leq\left|\epsj^n\sum_{\ii\in\Ieps(Q)}\gammaio
-\L^n(Q)\mathbb{E}[\gamma,\mathscr{I}]\right|+2\gammab
\L^n\left(Q\setminus\cup_{\ii\in\Ieps(Q)}\Qij\right),
\end{eqnarray*}
and thus \eqref{e:Birkhoff0} and the denumerability of $\mathscr{Q}$
yield that the rhs above is infinitesimal $\mmu$ a.s. in $\Om$.

\end{proof}
\begin{proof}[Proof (of Theorem~\ref{main3})]
We define $\Om^\prime$ 
as any subset of full probability in $\Om$ for which Theorem~\ref{ergo-w}
holds true for $U$. 
Then, we follow the lines of the proof of Theorem~\ref{main2} pointing 
out only the necessary changes.

First, we note that by assumption (O3), in particular condition 
$\deltaj=o(\epsj)$, we can still apply Lemma~\ref{joining} in this
framework.
Hence, to get the lower bound inequality we argue as in 
Proposition~\ref{lb}, we need only to substitute \eqref{e:capacity} 
suitably. Formula \eqref{e:stima1} is replaced by 
\begin{multline*}
|w_j|_{\Wsp(\Vij)}^p\geq\epsj^n|(u_j)_{\Cij}|^p
\csp\left(\lambdaj^{-1}(\Teioe-\xiij),B_{\frac{\epsj}{m^{3h_j+1}\lambdaj}};
\frac{\epsj}{m^{3h_j+2}\lambdaj}\right)
\\\geq\left(\gammaio-\epsilon_m\right)\,\epsj^n\,|(u_j)_{\Cij}|^p,
\end{multline*}
where $\epsilon_m>0$ is infinitesimal as $m\to+\infty$.
To infer the last inequality we have taken into account (O1) and the uniform 
convergence of the relative capacities established in \eqref{e:cap2} 
of Lemma~\ref{loccap}. This is guaranteed by condition 
$\epsj^{1+\frac n{n-sp}}=O(\deltaj)$ in (O3), which ensures that the 
rescaled obstacle sets $\lambdaj^{-1}(\Teioe-\xiij)$ are equi-bounded.

By summing up all the contributions, by Theorem~\ref{ergo-w} and by 
recalling that the sequence $(\zeta_j)_{j\in\N}$ defined in Lemma~\ref{joining} 
converges strongly to $u$ in $L^p(U)$ we infer 
$$
\liminf_j\sum_{\ii\in \Ieps}|w_j|^p_{\Wsp(\Vij)}\geq
\liminf_j\int_{A}\Psi_j(x,\om)|\zeta_j|^pdx\geq
(\mathbb{E}[\gamma,\mathscr{I}]-\epsilon_m)\int_{A}|u(x)|^pdx,
$$
for all $A\in\AA(U)$ with $A\subset\subset U$. 
By increasing $A$ to $U$ and by letting $m\to+\infty$ we conclude.

The upper bound inequality is established as in Proposition~\ref{ub} 
by substituting $\xij$ in the definition of $u_j$ with $\xi_j^{\ii,N}$, 
a $(1/N)$-minimizer of $\CSPK(\lambdaj^{-1}(\Teioe-\xiij),B_N)$. 
The uniform convergence of those relative capacities is guaranteed by  
the analogue of \eqref{e:cap1} in Lemma~\ref{loccap}.
This is the reason why we suppose $K$ to be invariant under rotations.

The estimate of $I^1_j$ then follows straightforward. 
For what the terms $I^h_j$, $h\in\{2,\ldots,6\}$ are concerned,
take note that since 
$\alpha^{-1}\defectsp(u,E)\leq\mathcal{D}_{\KK}(u,E)\leq\alpha\defectsp(u,E)$
(see \eqref{e:Kprop}$_2$), we can follow exactly Steps 2-6 to conclude.
\end{proof}

\begin{remark}\label{r:nonstandlatt}
If $\Lambda$ is a generic periodic $n$-dimensional lattice 
in $\Rn$ we can argue analogously. By definition 
the points of $\Lambda$ belong to the orbit of a $\Z$-module 
generated by $n$ linearly independent vectors, 
and the Vorono\"i cells turn out to be congruent polytopes 
(see \cite[Chapter 2]{Sen}). 
Clearly, the obstacle set $\Teoe$ and stationarity for the 
process $\gamma(\cdot,\om)$ have to be defined according to the group of 
translations associated to vectors in the mentioned $\Z$-module 
which leave $\Lambda$ invariant. 
\end{remark}

\subsection{Random Delone set of points}

According to Blanc, Le Bris and Lions (see \cite{BLBL1}, 
\cite{BLBL2}) a \emph{random Delone set} is a random variable 
$\Lambda:\Om\to(\R^n)^{\Z^n}$ satisfying Definition~\ref{voro}
$\mmu$ a.s. in $\Om$. Then, both $\rL$ and $\RL:\Om\to\R$ turn out to be
random variables: $\rL$ because of its very definition 
\eqref{d:rLRL}$_1$, and $\RL$ since it can be characterized as
$\RL(\cdot)=\sup_{{\bf Q}^n}\dist(x,\Lambda(\cdot))$.

We will deal with sequences $\Lambda_j:\Om\to(\R^n)^{\Z^n}$ 
of random Delone sets fulfilling conditions analogous to 
\eqref{e:Rj/rj}-\eqref{e:unif-distrib} (see below for 
relevant examples), that is for $\mmu$ a.e. $\om\in\Om$ it holds 
\begin{equation}\label{e:Rj/rjs}
\lim_j\|\rj\|_{L^\infty(\Om,\mmu)}=0,\quad 
(1\leq)\limsup_j\|\Rj/\rj\|_{L^\infty(\Om,\mmu)}<+\infty,
\end{equation}
\begin{equation}\label{e:lambdajs}
\lim_j\#(\Lambda_j(\om)\cap U)\rj^n(\om)=\theta(\om)\in(0,+\infty), 
\end{equation}
\begin{equation}\label{e:unif-distribs}
\mu_j(\cdot,\om):=\frac 1{\#(\Lambda_j(\om)\cap U)}
\sum_{\ii\in\Lambda_j(\om)\cap U}\delta_{\xiij(\om)}(\cdot)
\rightarrow\mu(\cdot,\om):=\beta(\cdot,\om)\Ln\res U\quad w^*\hbox{-}C_b(U),
\end{equation}
for some $\BB(U)\otimes\mathscr{P}$ measurable function 
$\beta$ such that $\beta(\cdot,\om)\in L^1(U,[0,+\infty])$ 
and $\|\beta(\cdot,\om)\|_{L^1(U)}=1$. 

\begin{remark}\label{r:indices}
In \eqref{e:unif-distribs} the set of indices $\Lambda_j(\om)\cap U$ 
susbsitutes $\Ieps(U,\om)$ to ensure the measurability of $\theta$,
$\beta$. For, the measurability of $\#(\Lambda(\om)\cap U)$ follows 
easily from that of the random Delone set $\Lambda$. 

Despite this, since 
$0\leq\#(\Lambda_j(\om)\cap U)-\#\Ieps(U,\om)\leq\#\IIeps(U,\om)$, 
by \eqref{e:Rj/rjs} we have for $\mmu$ a.e. $\om\in\Om$
\begin{equation}\label{e:unif-distribs2}
\mu_j(\cdot,\om)-\frac 1{\#\Ieps(U,\om)}
\sum_{\ii\in\Ieps(U,\om)}\delta_{\xiij(\om)}(\cdot)\to 0
\quad w^*\hbox{-}C_b(U),\quad 
\lim_j\#\Ieps(U,\om)\rj^n(\om)=\theta(\om).
\end{equation}
\end{remark}
\begin{remark}
Contrary to the deterministic setting we do not know  
whether conditions \eqref{e:lambdajs}, \eqref{e:unif-distribs} 
are satisfied up to the subsequences or not. Nevertheless, 
in Examples~\ref{ex:diffeostoch1},  \ref{ex:diffeostoch2} we show 
some sets of points satisfying all the conditions listed above.
The only piece of information we extract from 
\eqref{e:Rj/rjs} is that for a subsequence (not relabeled 
in what follows) we have the convergence
\begin{equation}\label{e:w*cpt}
\langle\mu_j(\cdot,\om),\varphi\rangle_{C_b(U), {\mathcal P}(U)}\to
\langle\mu(\cdot,\om),\varphi\rangle_{C_b(U), {\mathcal P}(U)}
\quad\quad w^*\hbox{-}L^\infty(\Om,\mmu)\quad \text{ for all } 
\varphi\in C_b(U).
\end{equation}
Thus, condition \eqref{e:unif-distribs} is equivalent 
to the strong convergence in $L^1(\Om,\mmu)$ for all $\varphi\in C_b(U)$
of the (sub)sequences 
$(\langle\mu_j(\cdot,\om),\varphi\rangle_{C_b(U), {\mathcal P}(U)})$.

To establish \eqref{e:w*cpt} take note that $\mu_j$ is a 
Young Measure (see \cite{Bal}). More precisely,  
${\mu}_j:(\Om,\mathscr{P},\mmu)\to {\mathcal P}(U)$, being 
${\mathcal P}(U)$ the space of probability measures on $U$, 
is a measurable map, i.e. ${\mu}_j(A,\om)$ 
is measurable for all $A\in\BB(U)$.
By taking into account the uniform conditon \eqref{e:Rj/rjs} and 
by arguing as in Remark~\ref{r:tight} we infer that the family 
$(\mu_j)_{j\in\N}$ is parametrized tight (see \cite[Definition 3.3]{Bal}):
given $\delta>0$ for a compact set 
$C_\delta\subset U$ it holds 
$$
(\mmu\otimes{\mu}_j)(\Om\times C_\delta)=
\int_{\Om}{\mu}_j(C_\delta,\om)d\mmu(\om)\geq 1-\delta.
$$
Thus, by parametrized Prohorov theorem (see \cite[Theorem 4.8]{Bal}) 
\eqref{e:w*cpt} holds true up to a 
subsequence. 

Finally, in view of Remark~\ref{r:tight} and \eqref{e:Rj/rjs} we note that 
it is not restrictive to suppose that the limit measure in 
\eqref{e:unif-distribs} is absolutely continuous w.r.to $\L^n\res U$
$\mmu$ a.s. in $\Om$.
\end{remark}

We fix a bounded set $T\subset\R^n$ and define the obstacle 
set $\Kj(\om):=\cup_{\ii\in\Z^n}\Kij(\om)$ as the union of random 
rescaled and translated copies of $T$ according to \eqref{e:Kij}, i.e.
\begin{equation}\label{e:Kijo}
\Kij(\om):=\xiij(\om)+\lambdaj(\om) T,\quad \text{ where } 
\lambdaj(\om):=\rj(\om)^{n/(n-sp)}.
\end{equation}
The asymptotics of the energies
$\KKepsj:L^p(U)\times\Om\to[0,+\infty]$ given by
\begin{eqnarray}  \label{e:fapproxKom}
  \KKepsj(u,\om)=
  \begin{cases}
\KK(u) & \text{ if } u\in \Wsp(U),\,
\tildeu=0\,\,  \csp \text{ q.e. on } \Teoe\cap U\\
+\infty & \text{ otherwise. }
  \end{cases}
\end{eqnarray}
where $\KK$ is the functional in \eqref{e:KK}, is a
 straightforward generalization of Theorem~\ref{main}. 
\begin{theorem}\label{main3}
Let $U\in\AA(\Rn)$ be bounded and connected with Lipschitz 
regular boundary.

Assume $\Lambda_j:\Om\to(\R^n)^{\Z^n}$ is a sequence of random Delone 
sets satisfying \eqref{e:Rj/rjs}-\eqref{e:unif-distribs} $\mmu$ a.s. 
in $\Om$. Then, $\mmu$ a.s. in $\Om$ the sequence
$(\KKepsj(\cdot,\om))$ $\Gamma$-converges in the $L^p(U)$ topology 
to $\KKK:L^p(U)\times\Om\to[0,+\infty]$ defined by
\begin{equation}\label{e:Glimit3}
\KKK(u,\om)=\KK(u)+\theta(\om)\,\cspk(T)\int_U|u(x)|^p\beta(x,\om)\,dx
\end{equation}
if $u\in \Wsp(U)$, $+\infty$ otherwise in $L^p(U)$.
\end{theorem}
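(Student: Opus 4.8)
The plan is to deduce Theorem~\ref{main3} from the deterministic result Theorem~\ref{main2} by freezing the random parameter $\om$. Since the energy $\KK$ itself carries no randomness, the only $\om$-dependence in the candidate limit $\KKK(\cdot,\om)$ of \eqref{e:Glimit3} resides in the density $\theta(\om)$ and the weight $\beta(\cdot,\om)$; so it will be enough to run the deterministic argument $\om$ by $\om$ on a set of full probability, and then to observe that the limit functional is jointly measurable in $(u,\om)$.

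First I would fix a set $\Om^\prime\subseteq\Om$ of full probability on which \eqref{e:Rj/rjs}--\eqref{e:unif-distribs} all hold; such a set exists by hypothesis (for \eqref{e:Rj/rjs} one passes from the $L^\infty(\Om,\mmu)$-bounds to the pointwise a.e.\ statements $\rj(\om)\to0$ and $\limsup_j\Rj(\om)/\rj(\om)<+\infty$ by a countable intersection over $j$). For each fixed $\om\in\Om^\prime$ the sequence $\Lambda_j(\om)$ is then a deterministic sequence of Delone sets of points, and I would check that the data $\bigl(\Lambda_j(\om),\rj(\om),T\bigr)$ satisfy the deterministic hypotheses \eqref{e:Rj/rj}--\eqref{e:unif-distrib} with $\theta=\theta(\om)\in(0,+\infty)$ and $\beta=\beta(\cdot,\om)$, $\|\beta(\cdot,\om)\|_{L^1(U)}=1$. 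Here \eqref{e:Rj/rj} is immediate from \eqref{e:Rj/rjs} evaluated at $\om$. The one point deserving care is that \eqref{e:lambdajs} and \eqref{e:unif-distribs} are normalized by the index set $\Lambda_j(\om)\cap U$, whereas the deterministic \eqref{e:lambdaj}, \eqref{e:unif-distrib} are normalized by $\Ieps(U,\om)$; but passing from one to the other changes neither the density $\theta(\om)$ nor the weak$^\ast$ limit of the empirical measures, precisely by \eqref{e:unif-distribs2} in Remark~\ref{r:indices}.

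With the deterministic hypotheses in force at a fixed $\om\in\Om^\prime$ I would then invoke Theorem~\ref{main2}: the functional $\KKepsj(\cdot,\om)$ of \eqref{e:fapproxKom} is exactly the deterministic functional $\KKepsj$ of \eqref{e:fapproxK} built on the obstacle set $\Kj(\om)$ from \eqref{e:Kijo}, so Theorem~\ref{main2} gives the $\Gamma(L^p(U))$-convergence of $\KKepsj(\cdot,\om)$ to $\KK(u)+\theta(\om)\,\cspk(T)\int_U|u(x)|^p\beta(x,\om)\,dx$ on $\Wsp(U)$ (and $+\infty$ otherwise), which is $\KKK(\cdot,\om)$ of \eqref{e:Glimit3}. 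Note that, in contrast with Theorem~\ref{main4}, no rotation invariance of $K$ is required here, since the proof of Theorem~\ref{main2} uses only the \emph{pointwise} convergence of relative capacities from Lemma~\ref{loccap2}, not the uniform convergence of Lemma~\ref{loccap}. To finish I would record that $(u,\om)\mapsto\KKK(u,\om)$ is a genuine random functional: $\theta$ is measurable because $\om\mapsto\#(\Lambda(\om)\cap U)$ is (the random Delone set $\Lambda$ being a measurable map, cf.\ Remark~\ref{r:indices}), $\beta$ is $\BB(U)\otimes\mathscr{P}$-measurable by hypothesis, and $\KK(u)$ is deterministic.

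I do not anticipate a serious obstacle: the argument is essentially a verbatim transfer, $\om$ by $\om$, of Theorem~\ref{main2}, the substance of the matter having already been settled in Propositions~\ref{lb} and~\ref{ub} and in the technical Lemmas~\ref{tecnico2}, \ref{joining}, \ref{loccap2}, all of which hold for an arbitrary fixed Delone set. The only genuinely delicate bookkeeping is the index-set substitution above --- checking that the probabilistically natural normalization by $\#(\Lambda_j(\om)\cap U)$ agrees in the limit with the geometrically natural one by $\#\Ieps(U,\om)$ --- which is exactly why Remark~\ref{r:indices} was inserted. If instead one wished to relax \eqref{e:lambdajs}--\eqref{e:unif-distribs} so as to hold only along a subsequence (as discussed in the remark preceding the statement), one would additionally need a common subsequence valid for $\mmu$-a.e.\ $\om$, obtainable through the parametrized Prohorov theorem as in \eqref{e:w*cpt}; under the present hypotheses this is not needed.
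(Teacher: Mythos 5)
Your proposal is correct and follows exactly the route the paper intends: the paper itself introduces Theorem~\ref{main3} only with the remark that it ``is a straightforward generalization of Theorem~\ref{main}'', i.e.\ the deterministic result applied pointwise in $\om$ on a full-measure set, and your argument simply makes that transfer explicit (including the index-set bookkeeping via Remark~\ref{r:indices} and the observation that only the pointwise capacity convergence of Lemma~\ref{loccap2}, not the uniform one of Lemma~\ref{loccap}, is needed, so no rotation invariance of $K$ is required). The ``Proof (of Theorem~\ref{main3})'' appearing in the source is in fact a mislabeled proof of Theorem~\ref{main4} (it uses the process $\gamma$, the ergodic theorem, and uniform capacity convergence, none of which belong to the random-Delone-set setting), so your reconstruction correctly supplies the argument the paper leaves implicit.
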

To ensure that the $\Gamma$-limit is deterministic we introduce 
a sequential version of stationarity for random lattices as 
defined by Blanc, Le Bris and Lions~\cite{BLBL1}, \cite{BLBL2}: 
there exists a positive and infinitesimal sequence $(\deltaj)_{j\in\N}$ 
such that for all $\ii\in\Z^n$ and $\mmu$ a.s. in $\Om$.
\begin{equation}\label{e:statlatt}
\Lambda_j(\tau_{\ii}\om)=\Lambda_j(\om)-\ii\deltaj.
\end{equation}
\begin{corollary}\label{c:rndmdelonedet}
If the assumptions of Theorem~\ref{main3} hold true, if
$(\Lambda_j)_{j\in\N}$ satisfies \eqref{e:statlatt}, and if
the family $(\tau_{\ii})_{\ii\in\Z^n}$ is ergodic, then 
$r_{\Lambda_j}$ is constant $\mmu$ a.s. in $\Om$ for all $j\in\N$, and 
there exists $\hat{\beta}\in L^1(U)$ such that for $\mmu$ 
a.e. $\om\in\Om$ we have
$\beta(\cdot,\om)=\hat{\beta}$ $\L^n$ a.e. in $U$. 

In addition, if $\rj$ is constant $\mmu$ a.s. in $\Om$ for all $j\in\N$,
then $\theta$ is constant $\mmu$ a.s. in $\Om$.
In particular, the $\Gamma$-limit in \eqref{e:Glimit3} is deterministic,
i.e. $\mmu$ a.s. in $\Om$  the functional in \eqref{e:Glimit3} takes the form
$$
\KKK(u,\om)=\KK(u)+\theta\,\cspk(T)\int_U|u(x)|^p\hat{\beta}(x)\,dx
$$
for every $u\in \Wsp(U)$, $+\infty$ otherwise in $L^p(U)$.
\end{corollary}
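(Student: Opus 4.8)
The strategy is to show that, thanks to the asymptotic stationarity \eqref{e:statlatt}, each limiting object --- namely $r_{\Lambda_j}$ for fixed $j$, the limit distribution $\beta$, and (under the extra hypothesis) the limit density $\theta$ --- is invariant under the maps $\tau_{\ii}$, hence $\mathscr{I}$-measurable, so that ergodicity forces it to be $\mmu$-a.s. constant; Theorem~\ref{main3} then immediately yields the claimed deterministic form of $\KKK$. \emph{Step 1: constancy of $r_{\Lambda_j}$.} For fixed $j$, relation \eqref{e:statlatt} says that $\Lambda_j(\tau_{\ii}\om)$ is a rigid translate of $\Lambda_j(\om)$; since the minimal interpoint distance in \eqref{d:rLRL} is translation invariant, $r_{\Lambda_j}(\tau_{\ii}\om)=r_{\Lambda_j}(\om)$ for every $\ii\in\Z^n$ and $\mmu$-a.e. $\om$. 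Thus $r_{\Lambda_j}$ is $\mathscr{I}$-measurable, and by ergodicity it is $\mmu$-a.s. equal to a constant. The same translation invariance applied to the covering radius, combined with \eqref{e:Rj/rjs}, shows that $R_{\Lambda_j}(\om)\le\eta_j$ for $\mmu$-a.e. $\om$, with $\eta_j\to0$; this uniform bound is used below.

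\emph{Step 2: constancy of $\beta$.} Fix $\ii\in\Z^n$ and $\varphi\in C_c(\Rn)$, and work on the full-measure set where \eqref{e:unif-distribs} holds at $\om$ and at $\tau_{\ii}\om$ (for all $\ii$). By \eqref{e:statlatt} the atoms of $\mu_j(\cdot,\tau_{\ii}\om)$ are exactly the points $x-\ii\deltaj$ with $x\in\Lambda_j(\om)\cap(U+\ii\deltaj)$, so $\int_U\varphi\,d\mu_j(\cdot,\tau_{\ii}\om)-\int_U\varphi\,d\mu_j(\cdot,\om)$ is controlled by the oscillation of $\varphi$ at the vanishing scale $|\ii|\deltaj$ together with the fraction of points of $\Lambda_j(\om)$ lying in the thin shell $(\partial U)_{|\ii|\deltaj}$. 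By the counting bounds \eqref{e:counting} (and $\rj\le r_{\Lambda_j}$, $R_{\Lambda_j}\le\eta_j$), this fraction is at most $c\,\Ln((\partial U)_{|\ii|\deltaj+\eta_j})/(\#\Ieps(U,\om)\,\rj^n(\om))$, which vanishes as $j\to\infty$ since $\#\Ieps(U,\om)\,\rj^n(\om)\to\theta(\om)\in(0,\infty)$ by \eqref{e:unif-distribs2}, $\eta_j,\deltaj\to0$, and $\Ln(\partial U)=0$ (so $\Ln((\partial U)_t)\to0$ as $t\to0^+$). Passing to the limit in \eqref{e:unif-distribs} gives $\int_U\varphi\,\beta(x,\tau_{\ii}\om)\,dx=\int_U\varphi\,\beta(x,\om)\,dx$ for all $\ii$ and, by density, all $\varphi$; hence $\beta(\cdot,\tau_{\ii}\om)=\beta(\cdot,\om)$ in $L^1(U)$ for $\mmu$-a.e. $\om$. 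Each functional $\om\mapsto\int_U\varphi\,\beta(x,\om)\,dx$ is therefore $\mathscr{I}$-measurable and, by ergodicity, $\mmu$-a.s. equal to its mean; choosing a countable $\varphi$-dense family and defining $\hat\beta\in L^1(U)$ by $\int_U\varphi\,\hat\beta\,dx:=\int_\Om\int_U\varphi\,\beta(x,\om)\,dx\,d\mmu$ (so that $\|\hat\beta\|_{L^1(U)}=1$) yields $\beta(\cdot,\om)=\hat\beta$ $\Ln$-a.e. for $\mmu$-a.e. $\om$.

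\emph{Step 3: constancy of $\theta$ and conclusion.} Assume in addition that $\rj$ is $\mmu$-a.s. a constant $\bar\rho_j$. The very boundary estimate of Step 2 gives $|\#(\Lambda_j(\tau_{\ii}\om)\cap U)-\#(\Lambda_j(\om)\cap U)|\,\bar\rho_j^n\le c\,\Ln((\partial U)_{|\ii|\deltaj+\eta_j})\to0$, so \eqref{e:lambdajs} (through \eqref{e:unif-distribs2}) forces $\theta(\tau_{\ii}\om)=\theta(\om)$ for every $\ii$ and $\mmu$-a.e. $\om$; ergodicity then makes $\theta$ a $\mmu$-a.s. constant. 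Substituting $\beta(\cdot,\om)\equiv\hat\beta$ and $\theta(\om)\equiv\theta$ into \eqref{e:Glimit3} of Theorem~\ref{main3} gives the asserted deterministic expression for $\KKK$.

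\emph{Main obstacle.} The crux is that \eqref{e:statlatt} is not genuine stationarity but carries the vanishing drift $\ii\deltaj$: one must pass to the limit in $j$ while simultaneously absorbing this drift and the boundary-layer discrepancy near $\partial U$ that it creates, which is precisely where the normality and local finiteness counting estimates \eqref{e:counting} and the hypothesis $\Ln(\partial U)=0$ enter. Once the $\mathscr{I}$-measurability of $\theta$ and $\beta$ has been secured, the remainder is a routine application of ergodicity.
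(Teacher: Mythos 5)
Your proof is correct and follows the paper's essential strategy: use the asymptotic stationarity \eqref{e:statlatt} to show that $r_{\Lambda_j}$, $\beta$, and $\theta$ are invariant under $(\tau_{\ii})_{\ii\in\Z^n}$, hence $\mathscr{I}$-measurable, and then invoke ergodicity. The only technical deviation is in Step 2: the paper tests the limit measures against balls $B_r(x_0)$ via \eqref{e:unif-distrib2} and derives the boundary-layer estimate \eqref{e:stazerror} for $|\#\Ieps(A,\tau_{\ii}\om)-\#\Ieps(A,\om)|$, then recovers the pointwise identity $\beta(\cdot,\tau_{\ii}\om)=\beta(\cdot,\om)$ through the Lebesgue--Besicovitch differentiation theorem applied to the averaged densities $\limsup_{r\to0^+}\fint_{B_r(x_0)}\beta\,dx$. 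You instead test against a countable family of continuous functions, absorb the drift $\ii\deltaj$ through the modulus of continuity of the test function plus the same shell-counting bound from \eqref{e:counting}, and conclude by separability. Both routes hinge on exactly the same boundary-layer control and the hypothesis $\Ln(\partial U)=0$; your version dispenses with the differentiation theorem at the modest cost of carrying the oscillation of $\varphi$ at the vanishing scale $|\ii|\deltaj$, so the two arguments are essentially interchangeable.
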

\begin{proof}
We show that $r_{\Lambda_j}$
is invariant under the action of 
$(\tau_{\ii})_{\ii\in\Z^n}$, this suffices to conclude since ergodicity 
implies that the only invariant random variables are $\mmu$ a.s. equal 
to constants. 
We work with $\Ieps(A,\om)$ in place of $\Lambda_j(\om)\cap A$, since
$0\leq\#(\Lambda_j(\om)\cap A)-\#\Ieps(A,\om)\leq\#\IIeps(A,\om)$ 
for all $A\in\AA(U)$ as already pointed 
out for $A=U$ in Remark~\ref{r:indices}. 

Given any $\ii\in\Z^n$, \eqref{e:statlatt} 
yields $V_j^{\kk}(\tau_{\ii}\om)=V_j^{\kk_{\ii}}(\om)-\ii\deltaj$ for some 
$\kk_{\ii}\in\Z^n$, which implies in turn that 
$r_{\Lambda_j}(\tau_\ii\om)=r_{\Lambda_j}(\om)$, and thus
$r_{\Lambda_j}$ is equal to a constant $\mmu$ a.s. in $\Om$
for every $j\in\N$. 

In addition, we have also that 
$\#\Ieps(A,\tau_{\ii}\om)=\#\Ieps(A+\ii\deltaj,\om)$ 
for every $A\in\AA(U)$, then if $\L^n(\partial A)=0$ for any $\delta>0$ 
and $j$ sufficiently big it holds $\#\Ieps(A_{-\delta},\om)\leq
\#\Ieps(A,\tau_\ii\om)\leq\#\Ieps(A_{\delta},\om)$ (see \eqref{e:vdelta} 
for the definition of $A_{\delta}$ and $A_{-\delta}$). In particular, 
we infer 
\begin{equation}\label{e:stazerror}
|\#\Ieps(A,\tau_\ii\om)-\#\Ieps(A,\om)|\leq
\#(\Ieps\cup\IIeps)(A_\delta\setminus\overline{A},\om)\vee
\#(\Ieps\cup\IIeps)(A\setminus\overline{A_{-\delta}},\om).
\end{equation}
If $\om\in\Om$ is such that \eqref{e:unif-distribs} holds,
for any $x_0\in U$ and $r\in(0,\dist(x_0,\partial U))$ we have 
by \eqref{e:counting}, \eqref{e:unif-distribs2}$_1$ and 
\eqref{e:stazerror}
$$
\int_{B_r(x_0)}\beta(x,\tau_{\ii}\om)dx
=
\lim_j\frac{\#\Ieps(B_r(x_0),\tau_{\ii}\om)}{\#\Ieps(U,\tau_{\ii}\om)}
=
\lim_j\frac{\#\Ieps(B_r(x_0),\om)}{\#\Ieps(U,\om)}
=
\int_{B_r(x_0)}\beta(x,\om)dx.
$$
In turn, from the latter equality we infer that if 
$$
\hat{\beta}(x_0,\om):=\limsup_{r\to 0^+}\fint_{B_r(x_0)}\beta(x,\om)dx,
$$
then $\hat{\beta}(x_0,\om)=\hat{\beta}(x_0,\tau_{\ii}\om)$ for all
$\ii\in\Z^n$, $x_0\in U$ and for $\mmu$ a.e. $\om\in\Om$. Thus, 
$\hat{\beta}(x_0,\om)$ is  $\mmu$ a.s. equal to a constant for every
$x_0\in U$; Lebesgue-Besicovitch differentiation theorem yields the 
conclusion.

Eventually, suppose that $\rj$ is constant, then \eqref{e:stazerror}, 
$\L^n(\partial U)=0$ and \eqref{e:Rj/rjs} imply 
that $\theta(\tau_\ii\om)=\theta(\om)$ for all $\ii\in\Z^n$. 
In conclusion, $\theta$ is equal to a constant $\mmu$ a.s. in $\Om$. 
\end{proof}

We discuss some examples related to sets of points introduced 
in \cite{BLBL1} and \cite{BLBL2}. 
As before, $(\epsj)_{j\in\N}$ denotes a positive 
infinitesimal sequence.

\begin{example}
Let us consider ensembles of points which are stationary perturbations 
of a standard periodic lattice. More precisely, given a random 
variable $X:(\Om,\mathscr{P},\mmu)\to\Rn$ define the family 
$(X_\ii)_{\ii\in\Z^n}$ by $X_\ii(\om):=X(\tau_\ii(\om))$.
By construction $(X_\ii)_{\ii\in\Z^n}$ is stationary w.r.to 
$(\tau_{\ii})_{\ii\in\Z^n}$. Let $\Lambda(\om):=\{\xii(\om)\}_{\ii\in\Z^n}$ 
where $\xii(\om):=\ii+X_\ii(\om)$, and $\Lambda_j(\om):=\epsj\Lambda(\om)$. 
A simple computation shows that \eqref{e:statlatt} is satisfied
with $\deltaj=\epsj$. 
Moreover, if $M:=\sup_{\ii}\|X_{\ii}-X\|_{L^\infty(\Om,\mmu)}<1$,
then $\Lambda$ is a random Delone set with
$1-M\leq r_{\Lambda}\leq R_{\Lambda}\leq 1+M$ $\mmu$ a.s. in $\Om$. 
In conclusion, $(\Lambda_j)_{j\in\N}$ satisfies both \eqref{e:Rj/rjs} 
and \eqref{e:statlatt}. We do not know which additional conditions 
must be imposed on $X$ to ensure \eqref{e:lambdajs} 
and \eqref{e:unif-distribs}.
\end{example}

\begin{example}\label{ex:diffeostoch1}
We consider a stochastic diffeomorphism as introduced by Blanc, 
Le Bris and Lions \cite{BLBL1}; that is a field 
$\Phi:\Rn\times\Om\to\Rn$ such that $\Phi(\cdot,\om)$ is a diffeomorphism 
for $\mmu$ a.e. $\om\in\Om$ satisfying
$$
\mathrm{ess\hbox{-}sup}_{\Rn\times\Om}\|\nabla\Phi(x,\om)\|\leq M<+\infty,
\quad \text{and }\quad 
\mathrm{ess\hbox{-}inf}_{\Rn\times\Om}\det\nabla\Phi(x,\om)\geq\nu>0.
$$ 
Recalling Example~\ref{ex:diffeo1} (see also \cite[Proposition 4.7]{BLBL1}),
let $\Lambda_j(\om)=\{\Phi(\epsj\ii,\om)\}_{\ii\in\Z^n}$,
then  $\mmu$ a.s. in $\Om$ it holds 
$(\nu M^{1-n}/2)\epsj\leq r_{\Lambda_j}(\om)\leq R_{\Lambda_j}(\om)\leq M\epsj$ 
and
$$
\beta(x,\om)=\left(\int_U\det\nabla\Phi^{-1}(x,\om)\,dx\right)^{-1}
\det\nabla\Phi^{-1}(x,\om).
$$ 
By choosing $\rj$ such that $\rj/\epsj\to\gamma>0$ $\mmu$ a.s. 
in $\Om$, we have $\theta=\gamma^n\int_U\det\nabla\Phi^{-1}(x,\cdot)\,dx$
$\mmu$ a.s. in $\Om$.
In conclusion, $\Lambda_j$ are random Delone sets satisfying 
\eqref{e:Rj/rjs}-\eqref{e:unif-distribs}.
\end{example}
\begin{example}\label{ex:diffeostoch2}
Consider as before a stochastic diffeomorphism,
and assume in addition $\Phi$ to be stationary w.r.to 
$(\tau_{\ii})_{\ii\in\Z^n}$, that is for every $\ii\in\Z^n$,
for a.e. $x\in\Rn$ and for $\mmu$ a.s. in $\Om$ it holds 
$$
\Phi(x,\tau_{\ii}(\om))=\Phi(x+\ii,\om).
$$ 
Let $\Lambda_j(\om)=\{\epsj\Phi(\ii,\om)\}_{\ii\in\Z^n}$, 
we have $(\nu M^{1-n}/2)\epsj\leq r_{\Lambda_j}(\om)\leq 
R_{\Lambda_j}(\om)\leq M\epsj$. Thus $\Lambda_j$ are random 
Delone sets satisfying \eqref{e:Rj/rjs}.
For what \eqref{e:lambdajs} and \eqref{e:unif-distribs} are 
concerned by \cite[Lemmas 2.1, 2.2]{BLBL0} and \cite[Remark 1.9]{BLBL1}, 
$\mmu$ a.s. in $\Om$ it holds
$$
\#(\Lambda_j(\om)\cap U)\epsj^n\to\left(\det\left(\mathbb{E}
\left[\int_{[0,1]^n}\nabla\Phi(x,\cdot)\,dx\right]\right)\right)^{-1}
\L^n(U).
$$ 
Then by using \eqref{e:unif-distrib2} it follows
$$
\frac 1{\#(\Lambda_j(\om)\cap U)}\sum_{\ii\in\Lambda_j(\om)\cap U}
\delta_{\xiij(\om)}\rightarrow\frac{1}{\L^n(U)} 
\Ln\res U\quad w^*\hbox{-}C_b(U).
$$
Take also note that $\Lambda_j$ satisifes \eqref{e:statlatt} 
with $\deltaj\equiv 0$. Hence, $r_{\Lambda_j}$ is constant
by Corollary~\ref{c:rndmdelonedet}, and moreover if we choose 
$\rj\in(0,\nu M^{1-n}\epsj/2)$ such that $\rj/\epsj\to \gamma>0$, 
the $\Gamma$-limit of the energies in \eqref{e:fapproxKom} is 
given by the functional 
$$
\KKK(u)=\KK(u)+\frac {\gamma^n\,\cspk(T)}{\det\left(\mathbb{E}\left[
\int_{[0,1]^n}\nabla\Phi(x,\cdot)\,dx\right]\right)}
\int_U|u(x)|^p\,dx.
$$
for $u\in\Wsp(U)$, $\KKK(u)\equiv+\infty$ otherwise in $L^p(U)$.

Eventually, let us remark that stationary diffeomorphisms according 
to Blanc, Le Bris and Lions \cite{BLBL1} satisfy the weaker condition
$\nabla\Phi(x,\tau_{\ii}\om)=\nabla\Phi(x+\ii,\om)$.  
The sets of points generated by $\Phi$ are not stationary 
according to \eqref{e:statlatt} in general.
\end{example}


\appendix

\section{ }\label{conti}

We prove some elementary bounds on the singular kernels that were  
crucial in the computations of subsections~\ref{s:technical} 
and \ref{s:proof}.
\begin{lemma}\label{Adams}
Let $\nu>0$, then there exists a positive constant 
$c(n,\nu)$ such that for every measurable set $O\subset\Rn$ 
it holds
\begin{itemize}
\item[(i)] if $\nu\in(0,n)$ and $\dist(z,O)=0$ 
 \begin{equation}
    \label{e:Adams1}
\int_{O}\frac{1}{|x-z|^{\nu}}dx\leq c(n,\nu)(\Ln(O))^{1-\nu/n}.
\end{equation}
\item[(ii)]  if $\nu\in(n,+\infty)$ and $\dist(z,O)>0$ 
  \begin{equation}
    \label{e:Adams2}
\int_{O}\frac 1{|x-z|^{\nu}}\dx\leq
c(n,\nu)\left(\dist(z,O)\right)^{n-\nu},
  \end{equation}
\end{itemize}
\end{lemma}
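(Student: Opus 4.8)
The plan is to reduce both inequalities to explicit radial integrals, exploiting only the monotonicity of the kernel $t\mapsto t^{-\nu}$ and a comparison of Lebesgue measures; no deep tool is needed.

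For item (i), fix $z$ with $\dist(z,O)=0$ and set $m:=\Ln(O)$. If $m=+\infty$ the right-hand side of \eqref{e:Adams1} is infinite and there is nothing to prove, and if $m=0$ both sides vanish (here $1-\nu/n>0$ since $\nu<n$); so assume $0<m<+\infty$ and let $r:=(m/\omega_n)^{1/n}$, so that $\omega_nr^n=m=\Ln(B_r(z))$. The first step is the elementary rearrangement bound
$$
\int_O\frac{\dx}{|x-z|^\nu}\leq\int_{B_r(z)}\frac{\dx}{|x-z|^\nu}.
$$
To prove it I would write the difference of the two integrals as $\int_{O\setminus B_r(z)}|x-z|^{-\nu}\dx-\int_{B_r(z)\setminus O}|x-z|^{-\nu}\dx$; on $O\setminus B_r(z)$ the integrand does not exceed $r^{-\nu}$, on $B_r(z)\setminus O$ it is at least $r^{-\nu}$, and $\Ln(O\setminus B_r(z))=\Ln(B_r(z)\setminus O)$ because $\Ln(O)=\Ln(B_r(z))<+\infty$; hence the difference is at most $r^{-\nu}\big(\Ln(O\setminus B_r(z))-\Ln(B_r(z)\setminus O)\big)=0$. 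The second step is the direct computation in polar coordinates, finite precisely because $\nu<n$:
$$
\int_{B_r(z)}\frac{\dx}{|x-z|^\nu}=n\omega_n\int_0^r t^{n-1-\nu}\,dt=\frac{n\omega_n}{n-\nu}\,r^{n-\nu}=\frac{n}{n-\nu}\,\omega_n^{\nu/n}\,m^{1-\nu/n},
$$
which yields \eqref{e:Adams1} with $c(n,\nu)=\tfrac{n}{n-\nu}\,\omega_n^{\nu/n}$.

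For item (ii), set $d:=\dist(z,O)>0$. By definition of the distance $|x-z|\geq d$ for every $x\in O$, hence $O\subseteq\Rn\setminus B_d(z)$, and monotonicity of the kernel together with the polar-coordinate computation (now finite because $\nu>n$) gives
$$
\int_O\frac{\dx}{|x-z|^\nu}\leq\int_{\Rn\setminus B_d(z)}\frac{\dx}{|x-z|^\nu}=n\omega_n\int_d^{+\infty}t^{n-1-\nu}\,dt=\frac{n\omega_n}{\nu-n}\,d^{n-\nu},
$$
which is \eqref{e:Adams2} with $c(n,\nu)=\tfrac{n\omega_n}{\nu-n}$. The only mildly non-routine point in the whole argument is the rearrangement inequality used in item (i), and even that collapses to the one-line measure comparison above once one observes that $O\setminus B_r(z)$ and $B_r(z)\setminus O$ have the same volume; everything else is an explicit integration, with the dichotomy $\nu<n$ versus $\nu>n$ entering exactly through the convergence of $\int_0^r t^{n-1-\nu}\,dt$ versus $\int_d^{+\infty}t^{n-1-\nu}\,dt$.
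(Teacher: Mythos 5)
Your proof is correct. For item~(ii) your argument coincides with the paper's: both pass to $O\subseteq\Rn\setminus B_{\dist(z,O)}(z)$ and integrate in polar coordinates. For item~(i), however, you take a genuinely different and somewhat cleaner route. You prove directly, via a measure-comparison on the symmetric difference $O\triangle B_r(z)$, that the ball of equal volume maximizes the integral (the ``bathtub'' principle), and then compute the integral over the ball explicitly. The paper instead invokes Cavalieri's formula, writes the integral as $\nu\int_0^{\diam(O)}\Ln(\{x\in O:|x-z|\leq s\})\,s^{-1-\nu}\,ds$, and bounds the level-set measures by $\min(\omega_n s^n,\Ln(O))$, splitting the range at $\overline s=(\Ln(O)/\omega_n)^{1/n}$; the hypothesis $\dist(z,O)=0$ enters only through the truncation at $\diam(O)$. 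The two strategies express the same geometric fact -- concentrating the mass of $O$ near $z$ only increases the integral -- but your direct rearrangement dispenses with the unused hypothesis $\dist(z,O)=0$, produces the sharp constant $\tfrac{n}{n-\nu}\omega_n^{\nu/n}$, and avoids a minor subtlety in the paper's version (the Cavalieri integral should actually run to $+\infty$ rather than to $\diam(O)$; the omitted tail $\nu\Ln(O)\int_{\diam(O)}^{\infty}s^{-1-\nu}ds$ is harmless for the stated bound but should not be silently dropped). Since only the existence of some $c(n,\nu)$ is used downstream, both proofs are adequate, but yours is the tighter and more self-contained one.
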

\begin{proof}
The lemma is an easy application of Cavalieri formula. 

Let us start with (i). 
Clearly, we may suppose  $\Ln(O)<+\infty$ the inequality
being trivial otherwise.
Then, by setting $\overline{s}=(\Ln(O)/\omega_n)^{1/n}$ 
a direct integration yields\footnote{Recall that for any 
measurable set $O$ the isodiametric inequality yields 
$\overline{s}\leq\diam(O)/2$. 
}
\begin{eqnarray*}
\lefteqn{\int_{O}{|x-z|^{-\nu}}dx=
\int_0^{+\infty}\Ln(\{x\in O:\,|x-z|\leq t^{-1/\nu}\})dt
=\nu\int_0^{\diam(O)}\frac{\Ln(\{x\in O:\,|x-z|\leq s\})}{s^{1+\nu}}ds}\\&&
=\nu\left(\int_0^{\overline{s}}+
\int_{\overline{s}}^{\diam(O)}\right)\ldots ds
\leq\nu\omega_n\int_0^{\overline{s}}s^{n-\nu-1}ds
+\nu\Ln(O)\int_{\overline{s}}^{\diam(O)}s^{-\nu-1}ds\\&&
=\frac{\nu}{n-\nu}\omega_n^{\nu/n}(\Ln(O))^{1-\nu/n}
+\Ln(O)\left(-(\diam(O))^{-\nu}+
\left(\frac{\Ln(O)}{\omega_n}\right)^{-\nu/n}\right)
\leq c(n,\nu)(\Ln(O))^{1-\nu/n}.
\end{eqnarray*}

Inequality \eqref{e:Adams2} easily follows from a direct
integration. More precisely, we have 
$$
\int_{O}|x-z|^{-\nu}\dx\leq
\int_{\Rn\setminus \overline{B}_{\dist(z,O)}(z)}
|x-z|^{-\nu}\dx =
\frac{n\omega_n}{\nu-n}\left(\dist(z,O)\right)^{n-\nu}.
$$
\end{proof}

\medskip

\end{document}